\documentclass[11pt]{article} 
\usepackage[utf8]{inputenc} 

\usepackage[margin=1in]{geometry} 
\geometry{letterpaper} 
\usepackage{graphicx} 

\usepackage{booktabs} 
\usepackage{array} 
\usepackage{paralist} 
\usepackage{verbatim} 
\usepackage{mathrsfs}
\usepackage{amssymb}
\usepackage{amsthm}
\usepackage{amsmath,amsfonts,amssymb}
\usepackage{esint}
\usepackage{graphics}
\usepackage{enumerate}
\usepackage{mathtools}
\usepackage{xfrac}
\usepackage{subcaption}
\usepackage{stmaryrd}
\SetSymbolFont{stmry}{bold}{U}{stmry}{m}{n}
\usepackage{mathabx}

\usepackage[dvipsnames]{xcolor}
\usepackage[colorlinks=true, pdfstartview=FitV, linkcolor=blue, citecolor=blue, urlcolor=blue]{hyperref}
\usepackage[normalem]{ulem}

\usepackage{tikz}
\usetikzlibrary{calc}
\usepackage{pgf}
\usetikzlibrary{external}
\tikzexternalize 


\numberwithin{equation}{section}
\numberwithin{figure}{section}

\newtheorem{theorem}{Theorem}[section]

\newtheorem{assumption}[theorem]{Assumption}
\newtheorem{corollary}[theorem]{Corollary}
\newtheorem{proposition}[theorem]{Proposition}
\newtheorem{lemma}[theorem]{Lemma}
\theoremstyle{definition}

\newtheorem{remark}[theorem]{Remark}

\newcommand*{\Id}{\ensuremath{\mathrm{I}_d}}

\newcommand*{\N}{\ensuremath{\mathbb{N}}}

\newcommand*{\Z}{\ensuremath{\mathbb{Z}}}

\newcommand*{\R}{\ensuremath{\mathbb{R}}}

\newcommand{\eps}{\varepsilon}

\renewcommand{\P}{\ensuremath{\mathbb{P}}}

\newcommand{\g}{\mathbf{g}}
\renewcommand{\c}{\mathsf{c}}

\newcommand{\s}{\mathbf{s}}
\renewcommand{\t}{\mathbf{t}}


\DeclareSymbolFont{boldoperators}{OT1}{cmr}{bx}{n}
\SetSymbolFont{boldoperators}{bold}{OT1}{cmr}{bx}{n}
\usepackage{accents}

\newcommand{\K}{\mathbf{k}}

\newcommand{\T}{\mathbb{T}}
\newcommand{\Cs}{\mathscr{C}}



\def\XXint#1#2#3{{\setbox0=\hbox{$#1{#2#3}{\int}$}
\vcenter{\hbox{$#2#3$}}\kern-.5\wd0}}


\let\originalleft\left
\let\originalright\right
\renewcommand{\left}{\mathopen{}\mathclose\bgroup\originalleft}
\renewcommand{\right}{\aftergroup\egroup\originalright}



\newcommand{\indc}{{\boldsymbol{1}}}

\newcommand{\C}{\mathscr{C}}
\newcommand{\A}{\mathscr{A}}
\newcommand{\E}{\mathbb{E}}
\newcommand{\Pc}{\mathcal{P}}
\newcommand{\Mc}{\mathcal{M}}
\newcommand{\ux}{\underline{x}}
\newcommand{\rr}{\mathsf{r}}

\newcommand{\Ec}{\mathcal{E}}
\newcommand{\Dc}{\mathcal{D}}
\newcommand{\diff}{\mathrm{d}}

\renewcommand{\H}{\dot H^{\frac{\s-d}{2}}}
\newcommand{\Hp}{\dot H^{1+\frac{\s-d}{2}}}
\renewcommand{\hat}{\widehat}


\makeatletter
\pgfmathdeclarefunction{erf}{1}{%
  \begingroup
    \pgfmathparse{#1 > 0 ? 1 : -1}%
    \edef\sign{\pgfmathresult}%
    \pgfmathparse{abs(#1)}%
    \edef\x{\pgfmathresult}%
    \pgfmathparse{1/(1+0.3275911*\x)}%
    \edef\t{\pgfmathresult}%
    \pgfmathparse{%
      1 - (((((1.061405429*\t -1.453152027)*\t) + 1.421413741)*\t 
      -0.284496736)*\t + 0.254829592)*\t*exp(-(\x*\x))}%
    \edef\y{\pgfmathresult}%
    \pgfmathparse{(\sign)*\y}%
    \pgfmath@smuggleone\pgfmathresult%
  \endgroup
}
\makeatother

\usepackage{titlesec}

\newcommand{\addperiod}[1]{#1.}
\titleformat{\section}
   {\centering\normalfont\Large}{\thesection.}{0.5em}{}
\titleformat*{\subsection}{\bfseries}
\titleformat{\subsubsection}[runin]
  {\normalfont\bfseries}
  {\thesubsubsection.}
  {0.5em}
  {\addperiod}
\titleformat*{\subsubsection}{\normalfont\itshape}
\titleformat*{\paragraph}{\bfseries}
\titleformat*{\subparagraph}{\large\bfseries}

\title{Large deviation principles for singular Riesz-type diffusive flows}

\author{Elias Hess-Childs
\thanks{Courant Institute of Mathematical Sciences, New York University.
{\footnotesize \href{elias.hess-childs@courant.nyu.edu}{elias.hess-childs@courant.nyu.edu}.}
}
}
\date{ } 

\usepackage[nottoc,notlot,notlof]{tocbibind}

\begin{document}

\maketitle

\begin{abstract}
We combine hydrodynamic and modulated energy techniques to study the large deviations of systems of particles with pairwise singular repulsive interactions and additive noise. Specifically, we examine periodic Riesz interactions indexed by parameter $\mathbf{s}\in[0,d-2)$ for $d\geq 3$ on the torus. When $\mathbf{s}\in(0,d-2)$, we establish a large deviation principle (LDP) upper bound and partial lower bound given sufficiently strong convergence of the initial conditions. When $\mathbf{s}=0$ (i.e., the interaction potential is logarithmic), we prove that a complete LDP holds. Additionally, we show a local LDP holds in the distance defined by the modulated energy.
\end{abstract}

\section{Introduction}
 We study the large deviations as $N\rightarrow\infty$ of the systems of interacting particles given by
\begin{equation}\label{eq:SDE}
\begin{cases}
\diff x_i^t=-\frac{1}{N}\sum\limits_{1\leq j\leq N:j\neq i}\nabla\g(x_i^t-x_j^t)\,\diff t +\sqrt{2\sigma}\,\diff w^t_i,\\
x_i^t|_{t=0}=x_i^0,
\end{cases}\qquad i\in\{1,\dotsc,N\}.
\end{equation}
Above, $w_i$ are $N$ independent standard Brownian motions in the $d$-dimensional torus $\T^d$, the initial conditions $x_i^0$ are deterministic, $\sigma>0$ is the temperature of the system, and $\g$ is a sub-Coulomb periodic Riesz interaction. That is to say, $\g$ is the unique zero-mean periodic solution to
\[(-\Delta)^{\frac{d-\s}{2}}\g=\c_{d,\s}(\delta_0-1),\]
where $\s\in[0,d-2)$ and the choice of the scaling constant is made so $\g$ behaves like $|x|^{-\s}$ or $-\log|x|$ near the origin when $\s>0$ or $\s=0$ respectively. Letting $\Pc(\T^d)$ be the space of probability measures topologized by weak convergence, we show that for any fixed time horizon $T>0$ the \textit{empirical trajectories} 
\[\mu_N:=t\mapsto\frac{1}{N}\sum_{i=1}^N\delta_{x_i^t},\]
viewed as random elements of $C([0,T],\Pc(\T^d))$, satisfy large deviation estimates when the initial conditions strongly converge to some $\gamma\in \Pc(\T^d)\cap L^\infty(\T^d)$, see Assumption~\ref{cond:initial-convergence}. Specifically, we show that $I_\gamma:C([0,T],\Pc(\T^d))\rightarrow[0,\infty]$ defined in~\eqref{def:rate_function} has compact sublevel sets and that
\begin{align*}
-\inf_{\mu\in B^{\circ}\cap \A}I_\gamma(\mu)&\leq\liminf_{N\rightarrow\infty}\frac{1}{N}\log\P(\mu_N\in B)\leq\limsup_{N\rightarrow\infty}\frac{1}{N}\log\P(\mu_N\in B)\leq -\inf_{\mu\in\overline{B}}I_\gamma(\mu),
\end{align*}
for all Borel $B$, where $\A\subset C([0,T],\Pc(\T^d))$ is the dense subset defined in~\eqref{def:dense_set}. When $\s=0$, $\mu\in\A$ whenever $I_\gamma(\mu)<\infty$, thus $\mu_N$ satisfy an LDP with good rate function $I_\gamma.$ When $\mu\in L^\infty([0,T],L^\infty(\T^d))$ we also show the local estimate
\[
\P\bigg(\sup_{t\in[0,T]} F_N(\ux_N^t,\mu^t)<\eps\bigg)\approx \exp\Big(-N \big(I_\gamma(\mu)+o_\eps(1)\big)\Big),\]
where $F_N(\ux^t_N,\mu^t)$ is the modulated energy defined in~\eqref{eq:modulated-energy}.

\subsection{Background}

The system~\eqref{eq:SDE} corresponds to dissipative dynamics with respect to the energy
\[ H_N(\ux_N):=\frac{1}{N^2}\sum_{1\leq i\neq j\leq N} \g(x_i-x_j),\quad \ux_N:=(x_1,\dotsc,x_N)\in(\T^d)^N. \]
More generally, it is an example of a mean-field interacting diffusion process. These arise in many pure and applied settings where particles or individuals interact pairwise with each other: they describe the dynamics of charged gases~\cite{vlasov_vibrational_1968}, the eigenvalues of random matrices~\cite{dyson_brownianmotion_2004,anderson_introduction_2009}, vortices in viscous fluids~\cite{helmholtz_lxiii_1867,osada_stochastic_1985}, the collective motion of animals or bacteria~\cite{perthame_transport_2007,fournier_stochastic_2017}, and scaling limits for neural networks~\cite{mei_mean_2018,rotskoff_trainability_2022}. Systems with Riesz interactions are particularly interesting, encompassing the first four examples above.

The study of Riesz-type diffusive flows has largely been concerned with showing that they satisfy~\textit{mean-field limits}~\cite{schochet_point-vortex_1996,hauray_wasserstein_2009,carrillo_mass-transportation_2012,carrillo_derivation_2014,berman_propagation_2019}. That is, proving that if the empirical measures of the initial configurations $\mu_N^0$ converge to $\gamma$ as $N\rightarrow\infty$, then $\mu_N^t$ converges to $\mu^t$, the deterministic solution of the McKean-Vlasov equation
\begin{equation}\label{eq:mve}
\begin{cases}
\partial_t\mu^t-\sigma\Delta\mu^t-\nabla\cdot(\mu^t\nabla\g*\mu^t)=0,\\
\mu^0=\gamma,
\end{cases}
\end{equation}
at all future times $t>0$. Mean-field convergence is thus analogous to a law of large numbers: the random empirical measures $\mu_N^t$ converge to a deterministic object $\mu^t$ as the total number of particles goes to infinity. Although there is a vast body of work on the mean-field convergence of interacting diffusions~\cite{mckean_propagation_1967,dobrushin_vlasov_1979,sznitman_topics_1991,jabin_mean_2016,jabin_quantitative_2018,guillin_systems_2023}, it is only relatively recently that mean-field convergence was proven for the full range ($\s\in[0,d))$ of repulsive Riesz interactions.

The main technical innovation that allowed this was the introduction of the \textit{modulated energy} in~\cite{duerinckx_mean-field_2016,serfaty_mean_2020}. Defined for a particle configuration $\ux_N\in(\T^d)^N$ and a probability measure $\mu\in L^\infty(\T^d)$ by
\begin{equation}\label{eq:modulated-energy}
    F_N(\ux_N,\mu):=\int_{(\T^d)^2\setminus\Delta}\g(x-y)\,\diff \bigg(\frac{1}{N}\sum_{i=1}^N\delta_{x_i}-\mu\bigg)^{\otimes 2}(x,y),\quad \Delta:=\big\{(x,y)\in(\T^d)^2\mid x=y\big\},
\end{equation}the modulated energy acts as a pseudo-distance between the empirical measure $\mu_N:=\frac{1}{N}\sum_{i=1}^N \delta_{x_i}$ and $\mu$. In particular, if $\ux_N$ are a sequence of particle configurations so that $\lim_{N\rightarrow \infty}F_N(\ux_N,\mu)= 0$, then the associated empirical measures $\mu_N$ converge to $\mu$ weakly and $F_N$ is asymptotically positive in that there exists $C,\beta>0$ depending on $d$ and $\s$ so that
\begin{equation}\label{eq:modulated_energy_positivity}
F_N(\ux_N,\mu)\geq-C\|\mu\|_{L^\infty(\T^d)}N^{-\beta}.
\end{equation}

In~\cite{serfaty_mean_2020}, given some regularity conditions on the mean-field limit, the modulated energy was used to prove mean-field convergence for Coulomb and super-Coulomb ($\s\geq d-2$) repulsive interactions on $\R^d$ in general dimensions for noiseless systems ($\sigma=0)$ using a Gr\"onwall argument applied to $F_N(\ux_N^t,\mu^t)$. This strategy has proven robust: it has been successfully adapted to handle more general conditions on the mean-field limit~\cite{rosenzweig_mean_2022,rosenzweig_coulomb_2022}, the entire range of repulsive Riesz interactions~\cite{nguyen_mean-field_2022}, and global-in-time mean-field convergence for sub-Coulomb diffusive flows ($\s<d-2$ and $\sigma>0)$~\cite{rosenzweig_global--time_2023}. The related~\textit{modulated free energy}~\cite{bresch_modulated_2019,bresch_mean-field_2019}, which combines the modulated energy with relative entropy, has also been used to show mean-field convergence for a wide class of singularly interacting diffusive flows including logarithmic attraction~\cite{bresch_mean_2023,de_courcel_attractive_2023} and Coulomb/super-Coulomb repulsion~\cite{de_courcel_sharp_2023}.

Despite these recent advances in the mean-field convergence theory of Riesz flows, as well as some forthcoming results showing Gaussian fluctuations around the mean-field limit~\cite{huang_fluctuations_2024}, Riesz flows are only known to satisfy LDPs in a few specific regimes of $d$, $\s$ and $\sigma$.  The first result was~\cite{duvillard_large_2001} where an LDP upper bound and a partial lower bound were shown in the setting of Dyson Brownian motion (DBM) or particles in $\R$ interacting via logarithmic repulsion $(\s=0)$ with $N$-dependent diffusion parameters $\sigma=(\beta N^{-1}).$ Soon after this was expanded to a full LDP~\cite{guionnet_large_2002,guionnet_addendum_2004}, and recently LDPs have also been shown for Dyson Bessel processes~\cite{guionnet_asymptotics_2023}. Outside of random matrix theory, in~\cite{fontbona_uniqueness_2004}, an LDP upper bound and a partial lower bound were established for repulsive systems on $\R$ with $\s=0$ and non-vanishing noise. LDPs were also shown for stochastic interacting vortex systems corresponding to conservative dynamics and $\s=0$ on the two-dimensional torus~\cite{chen_sample-path_2022}. Notably, the interaction potential is logarithmic and the dimension is less or equal to two in all of these regimes.

All of the above papers use \textit{hydrodynamics techniques} as first proposed in~\cite{kipnis_large_1990}. This method avoids some complications that arise when handling singular interactions as the empirical measure of the processes are tested against smooth functions, effectively smoothing the interaction potential. In contrast, most other techniques for establishing LDPs for interacting diffusions require very regular interactions such as methods using the Cameron-Martin-Girsanov theorem~\cite{dawsont_large_1987}, Hamilton-Jacobi theory~\cite[Section 13.3]{feng_large_2006}, ideas from stochastic optimal control theory~\cite{budhiraja_large_2012}, or the contraction principle~\cite{coghi_pathwise_2020}. Laplace's method as introduced in~\cite{ben_arous_methode_1990} to derive LDPs for smoothly interacting diffusions has been successfully extended to give LDPs for interactions strictly less singular than logarithmic ($\s=0$)~\cite{hoeksema_large_2024}, but seems intractable for Riesz interactions.

Until now, no LDP-type results for Riesz interactions in dimensions greater than two or $\s>0$ have been shown.

\subsection{Setup}
We associate $\T^d$ with $[-\frac{1}{2},\frac{1}{2}]^d$ under periodic boundary conditions. We abuse notation by letting $|x-y|$ denote the usual distance between any two points $x$ and $y$ on the torus. Thus by $|x|$, we mean the distance from $x$ to 0.

We endow the space of probability measures $\Pc(\T^d)$ with the topology of weak convergence metrized by the Wasserstein-1 distance
\begin{equation}\label{eq:weak_metric} 
d(\mu,\nu):=\sup_{\|\nabla\psi\|_{L^\infty(\T^d)}\leq 1} \bigg|\int_{\T^d} \psi(x)\,\diff(\mu-\nu)(x)\bigg|.
\end{equation}
We then endow $C([0,T],\Pc(\T^d))$ with the uniform topology.

The periodic Riesz potential $\g$ associated to parameter $\s\in[0,d)$ is taken to be the unique zero-mean solution to
\begin{align}\label{eq:Riesz}
|\nabla|^{d-\s}\g=\c_{d,\s}(\delta_0-1),\qquad \c_{d,\s}:=\begin{cases}
    \frac{4^{(d-\s)/2}\Gamma((d-\s)/2)\pi^{d/2}}{\Gamma(s/2)}&\s\in(0,d),\\
    \frac{\Gamma(d/2)(4\pi)^{d/2}}{2}&\s=0,
\end{cases}
\end{align}
where $|\nabla|$ denotes the operator $(-\Delta)^{1/2}$ with Fourier multiplier $2\pi|k|$.
As shown in~\cite{hardin_next_2017}, $\g$ is smooth away from $0$ and
\begin{equation}\label{eq:periodic-correction}
    \g(x)-\Big(|x|^{-\s}\indc_{\s>0}-\log|x|\indc_{\s=0}\Big)\in C^\infty\big(B_{1/4}(0)\big).
\end{equation}
That is, $\g$ is the Euclidean Riesz potential near the origin up to a smooth correction.

The \textit{Riesz energy} and \textit{Riesz enstrophy} of a positive measure $\mu$ are respectively defined by
\[\Ec(\mu):=\int_{(\T^d)^2} \g(x-y)\,\diff \mu^{\otimes2}(x,y)\quad\text{and}\quad\Dc(\mu):=\int_{(\T^d)^2} (-\Delta)\g(x-y)\,\diff \mu^{\otimes2}(x,y).\]
Then, setting
\[
\|f\|_{\dot{H}^\alpha(\T^d)}^2:=\sum_{k\in\Z^d:k\neq 0}(2\pi|k|)^{2\alpha}|\hat{f}(k)|^2,\qquad\alpha\in\R
\]
it holds that
\[\Ec(\mu)=\c_{d,\s}\|\mu\|_{\H(\T^d)}^2\quad\text{and}\quad \Dc(\mu)=\c_{d,\s}\|\mu\|_{\dot H^{1+\frac{\s-d}{2}}(\T^d)}^2\]
by the Plancherel theorem. For a trajectory of probability measures $\mu\in C([0,T],\Pc(\T^d))$ we then let
\[Q(\mu):=\sup_{t\in[0,T]}\bigg\{\Ec(\mu^t)+2\sigma\int_0^t\Dc(\mu^\tau)\,\diff \tau\bigg\}.\]
This corresponds to the natural energy inequality for the McKean-Vlasov equation~\eqref{eq:mve}.

Motivated by the fact that when $\mu$ is a smooth density
\[\int_{\T^d}\mu(x)\nabla\g*\mu(x)\cdot\psi(x)\,\diff x=\frac{1}{2}\int_{(\T^d)^2}(\psi(x)-\psi(y))\cdot\nabla\g(x-y) \mu(x)\mu(y)\,\diff x\,\diff y\]
by symmetrizing, whenever $\Ec(\mu)<\infty$ we abuse notation and let $\mu\nabla\g*\mu$ denote the distribution
\begin{equation}\label{def:mu_grad_mu}
\psi\mapsto \frac{1}{2}\int_{(\T^d)^2}(\psi(x)-\psi(y))\cdot\nabla\g(x-y)\,\diff \mu^{\otimes 2}(x,y).
\end{equation}
This is well-defined since~\eqref{eq:periodic-correction} implies that there exists $C(d,\s)>0$ so that
\[|(\psi(x)-\psi(y))\cdot\nabla\g(x-y)|\leq  C\|\nabla\psi\|_{L^\infty(\T^d)}(\g(x-y)+C),\]
for all $x,y\in\T^d$.

If $Q(\mu)<\infty$ and $\phi\in C^\infty([0,T]\times\T^d)$ we set
\begin{align*}
S(\mu,\phi):=&\langle\mu^T,\phi^T\rangle-\langle\mu^0,\phi^0 \rangle  -\int_0^T \langle\mu^t,\partial_t\phi^t+\sigma\Delta\phi^t \rangle\,\diff t-\sigma\int_0^T\langle\mu^t, |\nabla\phi^t|^2\rangle\,\diff t
\\\notag&\qquad+\frac{1}{2}\int_0^T\int_{(\T^d)^2} (\nabla\phi^t(x)-\nabla\phi^t(y))\cdot\nabla\g(x-y)\,\diff (\mu^t)^{\otimes 2}(x,y)\,\diff t.
\end{align*}
The last term on the right-hand side is well-defined for the same reason as~\eqref{def:mu_grad_mu}.

\subsection{Main results}

Throughout we assume that $\gamma$ is in $\Pc(\T^d)\cap L^\infty(\T^d)$ and that the initial conditions of~\eqref{eq:SDE} are well-prepared in the following way.
  
\begin{assumption}\label{cond:initial-convergence}
\[\lim_{N\rightarrow\infty} F_N(\ux_N^0,\gamma)=0,\]
 where $F_N$ is the modulated energy defined in~\eqref{eq:modulated-energy}.
\end{assumption}

\begin{remark} Assumption~\ref{cond:initial-convergence} holds if and only if $\mu_N^0\rightarrow \gamma$ in $\Pc(\T^d)$ and $H_N(\ux_N^0)\rightarrow \Ec(\gamma)$ as $N\rightarrow \infty.$ As we do not use this anywhere we omit the proof.
\end{remark}

Given this assumption, we show large deviation estimates for the empirical trajectories of~\eqref{eq:SDE} with respect to the (lower semi-continuous) rate function
\begin{equation}\label{def:rate_function}
I_\gamma(\mu):=\begin{cases}
\sup\limits_{\phi\in C^\infty([0,T]\times\T^d)} S(\mu,\phi)\vee\frac{1}{2\sigma}\big(Q(\mu)-\mathcal{E}(\gamma)\big)& Q(\mu)<\infty\text{ and }\mu^0=\gamma,\\
+\infty&\text{otherwise},
\end{cases}
\end{equation}
with local LDP lower bounds on the dense subset
\begin{equation}\label{def:dense_set}
\mathscr{A}:=\bigg\{\mu\in C([0,T],\Pc(\T^d))\ \Big|\ \int_0^T \big\||\nabla|^{\frac{1}{2}+\frac{\s-d}{2}}\mu^t\big\|_{L^{\frac{6d}{3d-\s-1}}(\T^d)}^3\,\diff t<\infty\bigg\}.    
\end{equation}

\begin{theorem}\label{thm:LDP} For any $\gamma\in \Pc(\T^d)\cap L^\infty(\T^d)$ and $\ux_N^0$ satisfying Assumption~\ref{cond:initial-convergence} it holds that:
        \begin{enumerate}
        \item\label{item:goodness} $I_\gamma$ has compact sublevel sets.
        
            \item\label{item:LDP-upper-bound} For all closed $F\subset C([0,T],\Pc(\T^d))$
        \begin{equation*}
        \limsup_{N\rightarrow\infty}\frac{1}{N}\log\P(\mu_N\in F)\leq -\inf_{\mu\in F} I_\gamma(\mu).
        \end{equation*}
        \item\label{item:LDP-lower-bound} For all open $O\subset C([0,T],\Pc(\T^d))$
        \begin{equation*}
        \liminf_{N\rightarrow\infty}\frac{1}{N}\log\P(\mu_N\in O)\geq \begin{cases}
            -\inf\limits_{\mu\in O\cap\mathscr{A}} I_\gamma(\mu)& \s>0,\\
            -\inf\limits_{\mu\in O} I_\gamma(\mu)& \s=0.
        \end{cases}
        \end{equation*}
        \end{enumerate}
\end{theorem}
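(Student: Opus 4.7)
The plan is to follow the Kipnis-Olla-Varadhan hydrodynamic approach to LDPs, adapted to singular interactions through the modulated-energy framework of Serfaty and Rosenzweig-Serfaty. The form of $I_\gamma$ in~\eqref{def:rate_function} already suggests the structure of the proof: the term $\sup_\phi S(\mu,\phi)$ will arise as the exponent of a family of mean-one martingales obtained from an It\^o expansion of $\langle \mu_N^t, \phi^t\rangle$, while the term $\frac{1}{2\sigma}(Q(\mu)-\Ec(\gamma))$ will come from a dissipation inequality obtained from It\^o applied to the empirical energy $H_N(\ux_N^t)$. Since $I_\gamma$ is the maximum of these two quantities, the upper bound must control both simultaneously, and the lower bound must achieve equality in both.

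\textbf{Upper bound (Part~\ref{item:LDP-upper-bound}).} For each $\phi\in C^\infty([0,T]\times\T^d)$, a standard computation shows that $t\mapsto \exp(NS_t(\mu_N,\phi))$ is a mean-one exponential martingale, where $S_t$ denotes the restriction of $S$ to $[0,t]$. The singular drift appearing in the It\^o expansion is tamed by the symmetrization~\eqref{def:mu_grad_mu}, which holds without regularity assumptions on the underlying measure. Markov's inequality together with a countable-dense supremum over $\phi$, Assumption~\ref{cond:initial-convergence}, and a minimax interchange yields the bound $\limsup_N \frac{1}{N}\log\P(\mu_N\in F)\leq -\inf_F\sup_\phi S$. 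Separately, applying It\^o's formula to $H_N$ gives a dissipation identity $H_N(\ux_N^t)+2\sigma\int_0^t\mathcal{D}_N(\ux_N^\tau)\,\diff\tau = H_N(\ux_N^0)+(\text{martingale})$, where $\mathcal{D}_N$ is the natural empirical analog of the enstrophy; the modulated-energy positivity~\eqref{eq:modulated_energy_positivity} together with exponential concentration of the martingale yields $\frac{1}{2\sigma}(Q(\mu)-\Ec(\gamma))\leq 0$ with overwhelming probability in the limit. Combining the two bounds via the $\vee$ in~\eqref{def:rate_function} finishes Part~\ref{item:LDP-upper-bound}.

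\textbf{Lower bound (Part~\ref{item:LDP-lower-bound}).} Fix $\mu\in\A$ with $I_\gamma(\mu)<\infty$ and let $\phi$ be a dual variable realizing the supremum, so that distributionally $\partial_t\mu = \sigma\Delta\mu - \nabla\cdot(\mu\nabla\g*\mu) - 2\sigma\nabla\cdot(\mu\nabla\phi)$. The Girsanov tilt
\[
\frac{\diff \P^\phi}{\diff\P} = \exp\bigg(\sqrt{2\sigma}\sum_{i=1}^N \int_0^T \nabla\phi^t(x_i^t)\cdot\diff w_i^t - \sigma\sum_{i=1}^N \int_0^T |\nabla\phi^t(x_i^t)|^2\,\diff t\bigg)
\]
adds a drift $2\sigma\nabla\phi^t$ to each particle, producing a perturbed particle system whose mean-field limit should be exactly $\mu$. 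The proof then splits into (i) showing $\sup_{t\in[0,T]} F_N(\ux_N^t,\mu^t)\to 0$ in $\P^\phi$-probability, and (ii) computing $\log(\diff\P^\phi/\diff\P) = N I_\gamma(\mu)+o(N)$ on this event. Step~(i) is carried out by running the Rosenzweig-Serfaty modulated-energy Gr\"onwall estimate for $F_N(\ux_N^t,\mu^t)$, absorbing the extra commutator arising from $2\sigma\nabla\phi$ using the $L^3_tL^{\frac{6d}{3d-\s-1}}_x$ bound on $|\nabla|^{\frac{1}{2}+\frac{\s-d}{2}}\mu$ that defines $\A$ in~\eqref{def:dense_set}. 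When $\s=0$, a time-space mollification approximates every $\mu$ with $I_\gamma(\mu)<\infty$ by elements of $\A$ in the uniform topology while preserving $\mu^0=\gamma$ and controlling the change in the action, upgrading the restricted lower bound to a full LDP.

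\textbf{Compactness (Part~\ref{item:goodness}) and main obstacle.} A bound $I_\gamma(\mu)\leq M$ implies $Q(\mu)\leq\Ec(\gamma)+2\sigma M$, which by the Plancherel identification yields uniform-in-time $\H(\T^d)$ control of $\mu^t$ together with $L^2_t\Hp(\T^d)$ control; the action bound evaluated on test functions of the form $\phi^t(x)=\chi(t)\psi(x)$ then gives equicontinuity of $t\mapsto\mu^t$ in the Wasserstein-1 distance via the dual formulation~\eqref{eq:weak_metric}, and Arzel\`a-Ascoli plus the lower semi-continuity of $I_\gamma$ (inherited from each $S(\cdot,\phi)$ and the Fourier form of $Q$) close the argument. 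The central obstacle will lie in Step~(i) of the lower bound: the Gr\"onwall estimate for $F_N(\ux_N^t,\mu^t)$ must close for a trajectory $\mu$ of only the minimal $\A$-regularity despite the tilt $2\sigma\nabla\phi$ breaking exact energy dissipation of the underlying flow. The restriction to $\A$ for $\s>0$ reflects precisely this: the commutator between $\nabla\g*\mu$ and the perturbation gradient fails to be controlled by $I_\gamma(\mu)$ alone without the $\A$-bound, and the mollification-density argument used to upgrade to a full LDP at $\s=0$ cannot be made to preserve both the initial condition and the action for $\s>0$.
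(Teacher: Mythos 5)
Your proposal correctly identifies the overall hydrodynamic--plus--modulated-energy framework, the role of the two branches of $I_\gamma$ (martingale action vs.\ energy dissipation), the Girsanov tilt, and the Gr\"onwall mean-field step; at that level of organization it matches the paper. However, there are two substantive gaps.

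\textbf{Upper bound.} The phrase ``Markov's inequality together with a countable-dense supremum over $\phi$ \dots and a minimax interchange'' conceals the key technical content: you must show that $S_N(\ux_N,\phi)\to S(\mu,\phi)$ whenever $\mu_N\to\mu$ in $\Cs^T$ \emph{and} the empirical energy functional $Q_N(\ux_N)$ stays bounded. The symmetrization in~\eqref{def:mu_grad_mu} only makes $S(\mu,\phi)$ well-defined for $Q(\mu)<\infty$; it does not control the difference between the discrete and continuous quadratic interaction terms. That step needs the renormalized commutator estimate~\eqref{eq:renormalized_commutator_estimate}, a space mollification of $\mu$ at an $N$-dependent scale (to keep $\|\mu\|_{L^\infty}N^{-\beta}$ small), and an interpolation between $d(\mu_N,\mu)$ and the time-integrated enstrophy. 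Your sketch simply skips this, and without it the weak local upper bound cannot be closed.

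\textbf{Lower bound.} The role you assign to the $\A$-bound is misplaced. You propose to ``let $\phi$ be a dual variable realizing the supremum'' and run the modulated-energy Gr\"onwall for $F_N(\ux_N^t,\mu^t)$, claiming the extra commutator from $2\sigma\nabla\phi$ is absorbed via the $L^3_t L^{\frac{6d}{3d-\s-1}}_x$ bound on $|\nabla|^{\frac12+\frac{\s-d}{2}}\mu$. This fails on two counts. First, the Riesz-representer $b$ lives only in the $L^2([0,T],L^2(\mu^t))$-closure of gradients, so in general no smooth $\phi$ achieves the supremum. Second, and more importantly, the Gr\"onwall argument for $F_N$ requires $\mu\in L^\infty_t L^\infty_x$ and a drift $b$ with $\int_0^T\|b^t\|_{C^1}^2+\||\nabla|^{\frac{d-\s}{2}}b^t\|_{L^{2d/(d-2-\s)}}^2\,\diff t<\infty$, which is strictly stronger than any consequence of $\mu\in\A$. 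The correct route is to mollify $\mu$ first (heat-kernel mollification, suitably reparametrized near $t=0$ to preserve $\mu^0=\gamma$), producing $\nu_\eps$ whose drift $b_\eps$ does satisfy these strong conditions, run Gr\"onwall on $\nu_\eps$, and then prove $\limsup_{\eps\to0} I_\gamma(\nu_\eps)\le I_\gamma(\mu)$. The $\A$-regularity enters precisely here — it is what lets one show $\int_0^T\int|(\mu\nabla\g*\mu)_\eps/\mu_\eps - \nabla\g*\mu_\eps|^2\,\diff\mu_\eps\,\diff t\to 0$ via a trilinear commutator bound that makes $\int|\nabla\g*\mu|^2\,\diff\mu$ meaningful. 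Your mollification step, which you invoke only at $\s=0$, must be carried out for \emph{all} $\s$; the distinction at $\s=0$ is merely that the $\A$-constraint becomes automatic on $\{Q<\infty\}$ by Gagliardo--Nirenberg.
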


\begin{remark}\label{rem:A_equivalence}
We will see in Subsection~\ref{subsec:lower-bound} that if $\mu\in\A$ and $Q(\mu)<\infty$ then
\[ I_\gamma(\mu)=\sup\limits_{\phi\in C^\infty([0,T]\times\T^d)} S(\mu,\phi).\]
This implies that Item~\ref{item:LDP-upper-bound} and~Item~\ref{item:LDP-lower-bound} in Theorem~\ref{thm:LDP} hold with $I_\gamma$ replaced by
\[\tilde{I}_\gamma(\mu):=\begin{cases}
  \sup\limits_{\phi\in C^\infty([0,T]\times\T^d)} S(\mu,\phi)& Q(\mu)<\infty\text{ and }\mu^0=\gamma,
  \\+\infty&\text{otherwise}.
\end{cases}\]
We state Theorem~\ref{thm:LDP} with $I_\gamma$ as we cannot show that $\tilde I_\gamma$ is lower semi-continuous when $\s>0.$ In particular, there could exist a sequence of measure trajectories $\mu_k$ converging to $\mu$ in $C([0,T],\Pc(\T^d))$ so that $Q(\mu)=\infty$ but
\[\limsup_{k\rightarrow\infty}\sup\limits_{\phi\in C^\infty([0,T]\times\T^d)} S(\mu_k,\phi)<\infty.\]
When $\s=0$, since $\mu\in\A$ whenever $I_\gamma(\mu)<\infty$, an LDP holds with good rate function $\tilde I_\gamma.$
\end{remark}

\begin{remark}\label{rem:perturbed_representation} If $I_\gamma(\mu)<\infty$ then there exists a vector field $b$ so that $\mu$ is a weak solution to
to the perturbed McKean-Vlasov equation
\begin{equation}\label{eq:mve+b}
\begin{cases}
    \partial_t\mu^t -\sigma\Delta\mu^t-\nabla\cdot(\mu^t\nabla\g*\mu^t) =-\nabla\cdot(b^t\mu^t),
    \\\mu^0=\gamma,
\end{cases}
\end{equation}
and
\[\sup_{\phi\in C^\infty([0,T]\times\T^d)}S(\mu,\phi)=\frac{1}{4\sigma}\int_0^T \int_{\T^d}|b^t(x)|^2\,\diff\mu^t(x)\,\diff t,\]
where $\mu^t\nabla\g*\mu^t$ in~\eqref{eq:mve+b} is defined by~\eqref{def:mu_grad_mu}.
This follows by a simple argument that uses the Riesz representation theorem and the fact that the fourth term in $S(\mu,\phi)$ is quadratic in $\phi.$

As a consequence, 
$I_\gamma(\mu)=0$ if and only if $Q(\mu)<\infty$, $\mu^0=\gamma$, and $\mu$ is a solution to the non-perturbed McKean-Vlasov equation~\eqref{eq:mve}. Also, as in~\cite{dawsont_large_1987} and~\cite{chen_sample-path_2022}, it holds that
\[\sup\limits_{\phi\in C^\infty([0,T]\times\T^d)} S(\mu,\phi)=\frac{1}{4\sigma}\int_0^T\big\|\partial_t\mu^t-\sigma\Delta\mu^t-\nabla\cdot(\mu^t\nabla\g*\mu^t)\big\|_{-1,\mu^t}^2\,\diff t,\]
where for any probability measure $\nu$ and distribution $T$
\[\|T\|_{-1,\nu}^2:=\sup_{\psi\in C^\infty(\T^d)}\Bigg\{2\langle T,\psi\rangle-\int_{\T^d} |\nabla\psi|^2\,\diff\nu\Bigg\}.\]
\end{remark}

\begin{theorem}\label{thm:local-LDP} Given Assumption~\ref{cond:initial-convergence}, for all $\mu\in C([0,T],\Pc(\T^d))\cap L^\infty([0,T],L^\infty(\T^d))$
\begin{align*}
\lim_{\eps\rightarrow0}\limsup_{N\rightarrow\infty}\frac{1}{N}\log\P\bigg(\sup_{t\in[0,T]}F_N(\ux_N^t,\mu^t)<\eps\bigg)&\leq-\sup_{\phi\in C^\infty([0,T]\times\T^d)}S(\mu,\phi)
    \\&\leq\lim_{\eps\rightarrow 0}\liminf_{N\rightarrow\infty}\frac{1}{N}\log\P\bigg(\sup_{t\in[0,T]}F_N(\ux_N^t,\mu^t)<\eps\bigg).
\end{align*}
Due to the obvious bound of $\liminf\leq \limsup$, the above inequalities are equalities.
\end{theorem}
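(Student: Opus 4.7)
The plan is to combine the exponential-tilting (Girsanov) approach to hydrodynamic LDPs with the modulated-energy framework developed earlier in this paper. The starting point is that, for each $\phi\in C^\infty([0,T]\times\T^d)$, applying Itô's formula to $\phi^t(x_i^t)$, summing over $i$, and antisymmetrizing the pair-interaction yields that
\[
\mathcal{Z}^\phi_N := \exp\big(N\,S_N(\ux_N,\phi)\big)
\]
is a mean-one martingale under $\P$, where $S_N$ is the natural particle analogue of $S$: replace $\mu^t$ by $\mu_N^t$, and replace the continuum double integral by $\tfrac{1}{2N^2}\sum_{i\neq j}(\nabla\phi^t(x_i^t)-\nabla\phi^t(x_j^t))\cdot\nabla\g(x_i^t-x_j^t)$. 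By Girsanov, $\mathcal{Z}^\phi_N$ is the Radon--Nikodym derivative of the law $\tilde\P_\phi$ under which the drift in~\eqref{eq:SDE} is perturbed by $2\sigma\nabla\phi^t$. The technical bridge between $S_N$ and $S$ is a Serfaty-type commutator estimate: for $\mu\in L^\infty(\T^d)$,
\[
\Big|\tfrac{1}{2N^2}\!\!\sum_{i\neq j}(\nabla\phi(x_i)-\nabla\phi(x_j))\cdot\nabla\g(x_i-x_j) - \tfrac{1}{2}\!\int\!(\nabla\phi(x)-\nabla\phi(y))\cdot\nabla\g(x-y)\,d\mu^{\otimes2}\Big| \leq C(\phi,\|\mu\|_\infty)\big(F_N(\ux_N,\mu)^{1/2}+o_N(1)\big),
\]
which, together with weak convergence of $\mu_N\to\mu$ (a consequence of $F_N\to 0$ via~\eqref{eq:modulated_energy_positivity}), yields $S_N(\ux_N,\phi)\geq S(\mu,\phi)-o_\eps(1)-o_N(1)$ on the event $\{\sup_t F_N(\ux_N^t,\mu^t)<\eps\}$.

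\emph{Upper bound.} For each $\phi\in C^\infty$, using $\E[\mathcal{Z}^\phi_N]=1$,
\[
\P(\sup_t F_N<\eps) = \E\big[\mathcal{Z}^\phi_N\,e^{-N S_N(\ux_N,\phi)}\,\indc_{\{\sup_t F_N<\eps\}}\big] \leq \exp\Big(-N\!\!\inf_{\{\sup_t F_N<\eps\}}\!S_N(\ux_N,\phi)\Big).
\]
Inserting the comparison above and sending $N\to\infty$, $\eps\to 0$, and finally the supremum over $\phi$ delivers the upper half.

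\emph{Lower bound.} We may assume $M:=\sup_\phi S(\mu,\phi)<\infty$, otherwise the statement is vacuous. By Remark~\ref{rem:perturbed_representation}, there is a generalized gradient $\phi^\ast$ with $\nabla\phi^\ast\in L^2(\mu\,dt)$ such that $\mu$ solves the perturbed McKean--Vlasov equation~\eqref{eq:mve+b} with $b=2\sigma\nabla\phi^\ast$ and $S(\mu,\phi^\ast)=M$. Approximate $\phi^\ast$ by $\phi_n\in C^\infty$ with $\nabla\phi_n\to\nabla\phi^\ast$ in $L^2(\mu\,dt)$, and let $\nu_n$ solve the tilted equation
\[
\partial_t\nu_n-\sigma\Delta\nu_n-\nabla\cdot(\nu_n\nabla\g\ast\nu_n)=-2\sigma\nabla\cdot(\nu_n\nabla\phi_n),\qquad \nu_n^0=\gamma,
\]
so that $\nu_n\to\mu$ in $L^\infty([0,T],L^\infty(\T^d))$ and in the Riesz-energy norm $\|\cdot\|_{\H}$ by stability of the McKean--Vlasov equation under smooth drift perturbations. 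Under $\tilde\P_{\phi_n}$ the particles satisfy~\eqref{eq:SDE} augmented by the smooth drift $2\sigma\nabla\phi_n$; adapting the modulated-energy Gr\"onwall argument of~\cite{rosenzweig_global--time_2023} (the smooth perturbation contributes only controlled commutator terms, with initial data controlled by Assumption~\ref{cond:initial-convergence}) gives $\tilde\P_{\phi_n}(\sup_t F_N(\ux_N^t,\nu_n^t)<\delta)\to 1$. Using the quadratic decomposition $F_N(\cdot,\mu)=F_N(\cdot,\nu_n)+2\int\g\,d(\mu_N-\nu_n)\otimes(\nu_n-\mu)+\c_{d,\s}\|\nu_n-\mu\|_{\H}^2$ and Cauchy--Schwarz in $\|\cdot\|_{\H}$, we may choose $n$ large and $\delta$ small (depending on $\eps$) so that $A_n^\delta:=\{\sup_t F_N(\cdot,\nu_n)<\delta\}\subset\{\sup_t F_N(\cdot,\mu)<\eps\}$. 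Then Jensen's inequality applied to $x\mapsto e^{-x}$ conditionally on $A_n^\delta$ gives
\[
\tfrac{1}{N}\log\P(\sup_t F_N(\cdot,\mu)<\eps) \geq \tfrac{1}{N}\log\tilde\P_{\phi_n}(A_n^\delta) - \E_{\tilde\P_{\phi_n}}\big[S_N(\ux_N,\phi_n)\,\big|\,A_n^\delta\big].
\]
As $N\to\infty$ the first term vanishes and the conditional expectation converges to $S(\nu_n,\phi_n)$ by the commutator estimate and uniform integrability (controlled by $\|\nabla^2\phi_n\|_\infty$ and the energy estimate $Q(\nu_n)<\infty$); sending $\eps\to 0$ with $n\to\infty$ commensurately gives $\liminf\geq -S(\mu,\phi^\ast)=-M$.

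\emph{Main obstacle.} The central technical point, shared by both directions, is the quantitative commutator/consistency estimate relating $S_N(\ux_N,\phi)$ to $S(\mu,\phi)$ in terms of $F_N(\ux_N,\mu)$, for which the $L^\infty$ hypothesis on $\mu$ is crucial. A secondary but nontrivial step specific to the lower bound is extending the modulated-energy mean-field convergence of~\cite{rosenzweig_global--time_2023} to the drift-perturbed system under $\tilde\P_{\phi_n}$: since $\nabla\phi_n$ is smooth this is a direct adaptation of the Gr\"onwall-in-$F_N$ method, but it must be carried out with sufficient uniformity in $n$ to track the simultaneous approximations $\phi_n\to\phi^\ast$ and $\nu_n\to\mu$.
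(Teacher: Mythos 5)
Your upper-bound sketch is essentially the same exponential-martingale/Chebyshev argument the paper uses, although you apply it directly on the modulated-energy ball $\{\sup_t F_N<\eps\}$ rather than first reducing to a weak ball $B_\eps(\mu)$ as in Proposition~\ref{prop:upper-bound}. One small correction: the renormalized commutator estimate~\eqref{eq:renormalized_commutator_estimate} needed to compare $S_N$ and $S$ carries a factor $\big(H_N(\ux_N)+\|\mu\|_{\H}^2+\dots\big)^{1/2}$, not just $F_N^{1/2}+o_N(1)$ as you wrote; this is fine because on $\{\sup_t F_N<\eps\}$ with $\mu\in L^\infty$ the identity $H_N=F_N+2\int\g\,\diff(\mu_N-\mu)\,\diff\mu+\Ec(\mu)$ together with Lemma~\ref{lem:renormalized_Holder} controls $H_N$, but this needs to be said.

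The lower bound has a genuine gap. You approximate the drift, $\nabla\phi_n\to\nabla\phi^\ast$ in $L^2(\mu\,\diff t)$, and then let $\nu_n$ be the \emph{solution} of the perturbed McKean--Vlasov equation with drift $2\sigma\nabla\phi_n$ started from $\gamma$, asserting that $\nu_n\to\mu$ in $L^\infty_tL^\infty_x$ and uniformly in $\H$ ``by stability of the McKean--Vlasov equation under smooth drift perturbations.'' This is exactly the kind of well-posedness/stability statement for rough-drift, singular-kernel McKean--Vlasov equations that the paper deliberately circumvents (see the overview, where it is remarked that the mean-field argument ``avoid[s] some difficulties in giving a natural regularity class for which weak solutions to~\eqref{eq:mve+b} are unique''). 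Nothing in the paper, nor anything elementary, gives $L^\infty$ stability of~\eqref{eq:mve+b} under $L^2(\mu\,\diff t)$-perturbations of the drift; indeed uniqueness of weak solutions with $b\in L^2(\mu\,\diff t)$ is not established. The paper's Proposition~\ref{prop:recovery} sidesteps this entirely by mollifying the \emph{trajectory} $\mu$ by the heat kernel rather than mollifying the drift and re-solving: since $\nu_\eps^t=(\mu^{t-\eps})*\Phi^\eps$ (glued to $(\gamma)_t$ near $t=0$) is literally a space-mollification of $\mu$, the convergences $\nu_\eps\to\mu$ in $\Cs^T$ and $\sup_t\|\nu_\eps^t-\mu^t\|_{\H}\to 0$ are automatic, and the only work is identifying the drift $b_\eps$ of $\nu_\eps$ and verifying $\limsup_\eps\frac{1}{4\sigma}\int|b_\eps|^2\,\diff\nu_\eps\leq\sup_\phi S(\mu,\phi)$, which is where Propositions~\ref{prop:alternate_commutator_estimate} and the class $\A$ enter. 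Your subsequent decomposition $F_N(\cdot,\mu)=F_N(\cdot,\nu_n)+2\int\g\,\diff(\mu_N-\nu_n)\,\diff(\nu_n-\mu)+\c_{d,\s}\|\nu_n-\mu\|_{\H}^2$ and the tilting/Jensen step are sound and match the paper's mechanics, but they only deliver the result once the approximating family $\nu_n$ with the required convergences and cost control is in hand --- and that construction is the substantive content you have not supplied.

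A secondary, related gap: even granting the existence of $\nu_n$, you would still need $\E_{\tilde\P_{\phi_n}}\big[S_N(\ux_N,\phi_n)\mid A_n^\delta\big]\to\frac{1}{4\sigma}\int|b|^2\,\diff\mu$ as $N\to\infty$, $\delta\to0$, $n\to\infty$. Showing that $\frac{1}{4\sigma}\int_0^T\int|2\sigma\nabla\phi_n|^2\,\diff\nu_n\,\diff t\to\frac{1}{4\sigma}\int|b|^2\,\diff\mu$ under your scheme is not straightforward because the measure $\nu_n$ is changing alongside $\phi_n$; in the paper this is the content of the (nontrivial) bounds~\eqref{eq:b-jensen} and~\eqref{eq:commutator-vanishes}, which exploit Jensen's inequality for mollification and the trilinear estimate of Proposition~\ref{prop:alternate_commutator_estimate}.
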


\begin{remark} The modulated energy $F_N(\ux_N^t,\mu^t)$ should be thought of as the renormalized version of the $\H(\T^d)$ distance between $\mu_N^t$ and $\mu^t$. Theorem~\ref{thm:local-LDP} thus shows that a local LDP holds in a stronger topology than $C([0,T],\Pc(\T^d))$.
\end{remark}

\subsection{Overview}

Our strategy generally follows that proposed in~\cite{kipnis_large_1990}: the upper bound is derived using exponential martingales and the lower bound is derived by considering appropriate regular perturbations of the system. This scheme is essentially used in~\cite{duvillard_large_2001,guionnet_large_2002,guionnet_addendum_2004,fontbona_uniqueness_2004,chen_sample-path_2022} to derive LDPs for Riesz interacting systems. As opposed to those papers, to overcome the difficulties introduced when the interaction potential is not logarithmic $(\s>0)$, we must use tools developed in the study of the mean-field convergence of Riesz flows.

We first summarize this general scheme before elaborating on where this work diverges from previous papers.

\begin{itemize}
\item \textit{Upper bound:} The empirical trajectories $\mu_N$ associated to~\eqref{eq:SDE} are first shown to be exponentially tight in $C([0,T],\Pc(\T^d)).$ It thus suffices to prove a weak large deviation upper bound. This is achieved by testing $\mu_N^t$ against a smooth functions $\phi\in C^\infty([0,T]\times\T^d).$ For fixed $\phi$ by applying It\^o's formula to $\langle \mu_N^t,\phi^t\rangle$ and rearranging one finds that there exists a continuous martingale $M^t$ adapted to the filtration of the noise with bounded quadratic variation so that 
\[S_N(\ux_N,\phi)= M^T-\frac{N}{2}\langle M\rangle^T\]
where
\begin{align*}
S_N(\ux_N,\phi)&:=\langle\mu_N^T,\phi^T\rangle-\langle\mu_N^0,\phi^0 \rangle  -\int_0^T \langle\mu_N^t,\partial_t\phi^t+\sigma\Delta\phi^t \rangle\,\diff t-\sigma\int_0^T\langle\mu_N^t, |\nabla\phi^t|^2\rangle\,\diff t
\\\notag&\qquad+\frac{1}{2}\int_0^T\int_{(\T^d)^2\setminus\Delta}(\nabla\phi^t(x)-\nabla\phi^t(y))\cdot\nabla\g(x-y)\,\diff (\mu_N^t)^{\otimes 2}(x,y)\,\diff t.
\end{align*}
The advantage of this is that $\exp(N M^t-\frac{N^2}{2}\langle M\rangle^t)$ is also a martingale~\cite[Proposition 5.11]{le_gall_measure_2022}, hence has a constant expectation (in this case equal to 1). Combined with Chebyshev's inequality this gives an exponential bound on the probability $\mu_N$ is near some measure trajectory $\mu$, namely that
\[\P\big(\mu_N\in B_\eps(\mu)\big)\leq \exp\Big({-N}\inf_{\mu_N\in B_\eps(\mu)}S_N(\ux_N,\phi)\Big).\]
If $S_N(\ux_N,\phi)\rightarrow S(\mu,\phi)$ whenever $\mu_N\rightarrow \mu$ in $C([0,T],\Pc(\T^d))$ this then implies (after optimizing over $\phi$) that
\[\lim_{\eps\rightarrow0}\limsup_{N\rightarrow\infty}\frac{1}{N}\log\P\big(\mu_N\in B_\eps(\mu)\big)\leq -\sup_{\phi\in C^\infty([0,T],\T^d)}S(\mu,\phi).\]

\item \textit{Lower bound:} Here one takes advantage of the fact that when $I_\gamma(\mu)<\infty$,
$\mu$ must be a weak solution to the perturbed McKean-Vlasov solution~\eqref{eq:mve+b} as noted in Remark~\ref{rem:perturbed_representation}. This is the formal mean-field limit of the particle systems
\begin{equation}\label{eq:SDE+b}
\begin{cases}
\diff x_i^t=-\frac{1}{N}\sum\limits_{1\leq j\leq N:j\neq i} \nabla\g(x_i^t-x_j^t)\,\diff t+b^t(x_i^t)\,\diff t +\sqrt{2\sigma}\,\diff w^t_i,\\
x_i^t|_{t=0}=x_i^0,
\end{cases}\qquad i\in\{1,\dotsc,N\}
,\end{equation}
and when $\mu$ and $b$ are sufficiently regular the empirical trajectories of~\eqref{eq:SDE+b} almost surely converge to the solution of~\eqref{eq:mve+b} in $C([0,T],\Pc(\T^d)).$ Since the Radon-Nikodym derivative between the law of the non-perturbed system~\eqref{eq:SDE} and the law of the perturbed system~\eqref{eq:SDE+b} is equal to
\begin{equation}\label{eq:girsanov_transform}
\exp\bigg(\frac{1}{\sqrt{2\sigma}}\sum_{i=1}^N\int_0^T b^t(x_i^t)\cdot \diff w_i^t-\frac{1}{4\sigma}\sum_{i=1}^N\int_0^T |b^t(x_i^t)|^2\,\diff t\bigg)
\end{equation}
by the Girsanov theorem, the mean-field convergence of the perturbed systems gives a local lower LDP bound of the form
\[\lim_{\eps\rightarrow 0}\liminf_{N\rightarrow\infty}\frac{1}{N}\log\P\big(\mu_N\in B_\eps(\mu)\big)\geq -\frac{1}{4\sigma}\int_0^T \int_{\T^d} |b^t(x)|^2\,\diff \mu^t(x)\,\diff t=-\sup_{\phi\in C^\infty([0,T]\times\T^d)}S(\mu,\phi)\]
via a tilting argument. The lower bound for less regular $\mu$ is then recovered via approximation.
\end{itemize}

The main stumbling block in completing this approach when $\g$ is singular is that the convergence of $\mu_N\rightarrow \mu$ in $C([0,T],\Pc(\T^d))$ does not guarantee
that $S_N(\ux_N,\phi)\rightarrow S(\mu,\phi).$ In particular, letting
\[\K_{\psi}(x,y):=(\psi(x)-\psi(y))\cdot\nabla\g(x-y),\]
it is problematic to show that for arbitrary $\phi\in C^\infty([0,T]\times \T^d)$
\begin{equation}\label{eq:convergence_overview}\int_0^T\int_{(\T^d)^2\setminus\Delta}\K_{\nabla\phi^t}(x,y)\,\diff (\mu_N^t)^{\otimes 2}(x,y)\,\diff t\rightarrow \int_0^T\int_{(\T^d)^2}\K_{\nabla\phi^t}(x,y)\,\diff (\mu^t)^{\otimes 2}(x,y)\,\diff t.
\end{equation}

When $\s=0$, this convergence is almost immediate for non-atomic measure trajectories. $\K_\psi(x,y)$
is bounded and continuous away from the diagonal whenever $\psi$ is Lipschitz, thus using a straightforward argument (which is essentially Delort's theorem~\cite[Lemma 6.3.1]{chemin_perfect_1998})~\eqref{eq:convergence_overview} holds whenever $\mu_N\rightarrow \mu$ in $C([0,T],\Pc(\T^d))$ and $\mu^t$ is non-atomic for almost every $t$.

When $\s>0$, $\K_\psi(x,y)$ can be unbounded, and we instead use a so-called \textit{commutator estimate} to control the difference between the two sides of~\eqref{eq:convergence_overview}. We also use some \textit{\textit{a priori}} bounds on the particle trajectories $\ux_N$: we introduce a family of auxiliary functions $Q_N$ for which we can give good exponential bounds on the probability that $Q_N(\ux_N)$ is large. These functions motivate the definition of $Q$ and are necessary for showing the convergence of $S_N$ to $S.$ Although $Q$ is similar to the auxiliary function introduced in~\cite{chen_sample-path_2022}, we use the quantitative control on all terms in $Q_N$ to complete the upper bound. This is in contrast to~\cite{chen_sample-path_2022} where the \textit{a priori} bounds are used to reduce the class of measure trajectories for which the local LDP upper bounds need to be proved.

We also use a commutator estimate and the modulated energy to prove the mean-field convergence of the perturbed discrete flows~\eqref{eq:SDE+b} to the perturbed McKean-Vlasov equation~\eqref{eq:mve+b}. This is in contrast to an argument using compactness and the unique existence of a weak solution to~\eqref{eq:mve+b} as used in previous papers. By doing this we avoid some difficulties in giving a natural regularity class for which weak solutions to~\eqref{eq:mve+b} are unique and we prove a mean-field limit in a stronger topology than $C([0,T],\Pc(\T^d)),$ allowing us to prove Theorem~\ref{thm:local-LDP}.

Finally, in~\cite{chen_sample-path_2022}, the auxiliary function is additionally used in the approximation argument that extends the lower bound for regular measure trajectories to arbitrary measure trajectories. Due to numerological issues, we can only do this when $\s=0$, and can generally only extend the lower bound to $\mu$ that are in $\A.$

Below we elaborate on these main points before continuing to the body of the paper.

\subsubsection{Auxiliary functionals and \textit{a priori} bounds}
The definition of $Q$ is motivated by the energy dissipation structure of~\eqref{eq:SDE}. In particular, letting 
\[
Q_N(\ux_N):=\sup_{t\in[0,T]} \bigg\{H_N(\ux_N^t)+2\sigma\int_0^tD_N(\ux_N^\tau)\,\diff \tau\bigg\},\]
where
\[D_N(\ux_N):=\frac{1}{N^2}\sum_{1\leq i\neq j\leq N}(-\Delta)\g(x_i-x_j)\]
is the discrete Riesz enstrophy, given Assumption~\ref{cond:initial-convergence} it holds that 
\begin{equation}\label{eq:overview_energy_bounds}\limsup_{N\rightarrow\infty}\frac{1}{N}\log\P\Big(Q_N(\ux_N)\geq L\Big)\leq-\frac{1}{2\sigma}\big(L-\Ec(\gamma)\big).
\end{equation}
This essentially follows by applying It\^o's formula to $H_N(\ux_N^t)$ and crucially uses that $\s<d-2$ so that the It\^o correction term $D_N(\ux_N^t)$ can be bounded below.
 
 As a consequence of~\eqref{eq:overview_energy_bounds} (and that $Q_N$ satisfy a $\Gamma$-limit lower bound with respect to $Q$), we can restrict our attention to measure trajectories such that $Q(\mu)<\infty$ (for which $S(\mu,\phi)$ is well-defined) and in the upper bound we only need to show convergence of $S_N(\ux_N,\phi)$ to $S(\mu,\phi)$ when both $\mu_N\rightarrow\mu$ in $C([0,T],\Pc(\T^d))$ and there exists $L>0$ so that $Q_N(\ux_N)\leq L$ for all $N$.

\subsubsection{\texorpdfstring{$S_N$}{} convergence}
To show the convergence of the last term in $S_N(\ux_N,\phi)$ to the last term in $S(\mu,\phi)$ we use the following proposition.
\begin{proposition}\label{prop:renormalized_commutator_estimate} There exists $C(d,\s),\beta(d,\s)>0$ so that for every $\ux_N\in(\T^d)^N$ pairwise distinct, $\mu\in \Pc(\T^d)\cap L^\infty(\T^d)$, and Lipschitz vector field $\psi$
\begin{equation}\label{eq:old-commutator-estimate}
\Bigg|\int_{(\T^d)^2\setminus\Delta} \K_\psi(x,y)\,\diff(\mu_N-\mu)^{\otimes2}(x,y) \Bigg|\leq CA_\psi\bigg( F_N(\ux_N,\mu)+C\|\mu\|_{L^\infty(\T^d)}N^{-\beta}\bigg),
\end{equation}
and
\begin{align}\label{eq:renormalized_commutator_estimate}
    &\bigg|\int_{(\T^d)^2\setminus\Delta} \K_\psi(x,y)\,\diff (\mu_N^{\otimes 2}-\mu^{\otimes 2})(x,y)\bigg|
    \\\notag&\quad\leq C A_\psi\Big(F_N(\ux_N,\mu)+C\|\mu\|_{L^\infty(\T^d)}N^{-\beta}\Big)^{\frac{1}{2}}\Big(H_N(\ux_N)+\|\mu\|_{\H(\T^d)}^2+C\|\mu\|_{L^\infty(\T^d)}N^{-\beta}+1\Big)^{\frac{1}{2}},
\end{align}
where $A_\psi:=\|\nabla \psi\|_{L^\infty(\T^d)}+\big\||\nabla|^{\frac{d-\s}{2}}\psi\big\|_{L^{\frac{2d}{d-2-\s}}(\T^d)}$.
\end{proposition}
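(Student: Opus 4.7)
The plan is to adapt the stress-energy tensor formulation of the modulated energy, developed by Duerinckx--Serfaty for the Coulomb case and extended to sub-Coulomb Riesz interactions by Nguyen--Rosenzweig--Serfaty and Rosenzweig. First, I would introduce a truncation scale $\eta > 0$, smear each Dirac $\delta_{x_i}$ to a smoothed mass $\delta_{x_i}^{(\eta)}$ supported in $B_\eta(x_i)$ to form $\mu_N^\eta$, and define the electric potential $h_N^\eta := \g * (\mu_N^\eta - \mu)$, which satisfies $|\nabla|^{d-\s} h_N^\eta = \c_{d,\s}(\mu_N^\eta - \mu)$. The smearing errors in the kernel integrals, together with the corresponding correction in the coercivity bound $\|h_N^\eta\|_{\dot H^{(d-\s)/2}(\T^d)}^2 \lesssim F_N(\ux_N, \mu) + O(\eta^\beta)$, are controlled by $\|\mu\|_{L^\infty(\T^d)}\eta^{d-\s}$-type terms via~\eqref{eq:periodic-correction}; optimizing $\eta = N^{-c}$ absorbs these into the $N^{-\beta}\|\mu\|_{L^\infty(\T^d)}$ error.

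For the first inequality, the symmetry $\K_\psi(x,y) = \K_\psi(y,x)$ lets me rewrite the smeared LHS as $\frac{2}{\c_{d,\s}}\int_{\T^d} \psi \cdot \nabla h_N^\eta \, |\nabla|^{d-\s}h_N^\eta \, \diff x$. In the Coulomb case $\s = d-2$ this yields the classical stress-tensor bound $\|\nabla\psi\|_{L^\infty(\T^d)}\int |\nabla h_N^\eta|^2$. For general $\s \in [0,d-2)$, I would lift $h_N^\eta$ to an extension $H_N^\eta$ on $\T^d \times \R^{k+1}$ with weight $|y|^\alpha$ (parameters chosen according to $d-\s$), so that the fractional Laplacian on the trace corresponds to a weighted normal derivative (Caffarelli--Silvestre--Roncal--Stinga extension); integration by parts in the extension produces a main term bounded by $\|\nabla\psi\|_{L^\infty(\T^d)}$ times a weighted Dirichlet energy equivalent to $\|h_N^\eta\|_{\dot H^{(d-\s)/2}(\T^d)}^2$, hence to $F_N(\ux_N, \mu)$ after renormalization. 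The singular remainder terms are then estimated via Hölder and the Sobolev embedding $\dot H^{(d-\s)/2}(\T^d) \hookrightarrow L^{2d/(d-2-\s)}(\T^d)$, producing the $\||\nabla|^{(d-\s)/2}\psi\|_{L^{2d/(d-2-\s)}(\T^d)}$ contribution in $A_\psi$.

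For the second inequality, decompose $\mu_N^{\otimes 2} - \mu^{\otimes 2} = (\mu_N - \mu)^{\otimes 2} + (\mu_N - \mu) \otimes \mu + \mu \otimes (\mu_N - \mu)$. The first term is bounded by the first inequality, and is at most the claimed right-hand side via the trivial bound $a \leq \sqrt{a(a+b)}$ combined with the polarization-type inequality $F_N(\ux_N,\mu) \lesssim H_N(\ux_N) + \|\mu\|_{\H(\T^d)}^2 + \|\mu\|_{L^\infty(\T^d)}$, obtained by expanding the square defining $F_N$ and bounding $\g * \mu$ by $\|\mu\|_{L^\infty(\T^d)}$ (using $\g \in L^1(\T^d)$). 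For the cross terms, set $\Phi_\psi(x) := \int_{\T^d}\K_\psi(x,y)\,\diff\mu(y)$; by Sobolev duality $\bigl|\int \Phi_\psi\,\diff(\mu_N - \mu)\bigr| \leq \|\Phi_\psi\|_{\dot H^{(d-\s)/2}(\T^d)} \cdot \|\mu_N - \mu\|_{\H(\T^d)}$, and the second factor is bounded after smearing and renormalization by $(F_N(\ux_N, \mu) + C\|\mu\|_{L^\infty(\T^d)}N^{-\beta})^{1/2}$. A commutator/paraproduct estimate on $\Phi_\psi$ exploiting the near-diagonal structure of $\g$ from~\eqref{eq:periodic-correction} then yields $\|\Phi_\psi\|_{\dot H^{(d-\s)/2}(\T^d)} \lesssim A_\psi(\|\mu\|_{\H(\T^d)} + \|\mu\|_{L^\infty(\T^d)}^{1/2})$, which closes the bound.

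The main obstacle will be executing the fractional stress-tensor step cleanly for general $\s \in (0, d-2)$, where one must track weighted Dirichlet energies in the extension and match boundary contributions with the modulated-energy coercivity; the precise Lebesgue exponent $2d/(d-2-\s)$ in $A_\psi$ arises as the critical Sobolev exponent at this scaling and seems essential for closing the singular remainder terms against the $\H(\T^d)$-norm of the electric field $h_N^\eta$.
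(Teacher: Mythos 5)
Your overall framework is the right one (smear the Diracs at scale $\eta$, run the stress-tensor/Caffarelli--Silvestre extension argument on the smeared electric potential, use the coercivity $\|\mu_N^{(\eta)}-\mu\|^2_{\H}\lesssim F_N+\text{err}$, optimize $\eta$), but two concrete steps in your plan are too lossy to close.

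For the first inequality, you claim that the difference between the off-diagonal integral against $(\mu_N-\mu)^{\otimes 2}$ and the smeared integral against $(\mu_N^{(\eta)}-\mu)^{\otimes 2}$ "is controlled by $\|\mu\|_{L^\infty}\eta^{d-\s}$-type terms.'' That is true for the cross-terms involving $\mu$ (these are the easy part, bounded by $\|\nabla\psi\|_{L^\infty}\|\mu\|_{L^\infty}\eta$ via Young's convolution inequality). But it is false for the particle--particle cross terms of the form $\frac{1}{N^2}\sum_{i\neq j}\int \K_\psi \,\diff(\delta_{x_i}-\delta_{x_i}^{(\eta)})\,\diff(\delta_{x_j}+\delta_{x_j}^{(\eta)})$: when two particles $x_i,x_j$ are at distance comparable to $\eta$, the contribution is of size $\g(x_i-x_j)$, which is not controlled by any $\|\mu\|_{L^\infty}$-type quantity. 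The paper handles this term (its ``Term$_3$'') by using the microscale control that falls out of the monotonicity-under-mollification estimate, namely $\frac{1}{N^2}\sum_{|x_i-x_j|\leq\eps}\g(x_i-x_j)\lesssim F_N(\ux_N,\mu)+C\|\mu\|_{L^\infty}(\eps+\tfrac{\eps^{-\s}-\log\eps}{N})$. Without this, the renormalization does not close, and this near-diagonal bookkeeping is precisely the hard part of turning the smooth commutator estimate into a renormalized one.

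For the second inequality, your bound $F_N\lesssim H_N+\|\mu\|_{\H}^2+\|\mu\|_{L^\infty}$ (obtained by estimating $\|\g*\mu\|_{L^\infty}\leq\|\g\|_{L^1}\|\mu\|_{L^\infty}$) does not suffice. You need $F_N+C\|\mu\|_{L^\infty}N^{-\beta}\lesssim H_N+\|\mu\|_{\H}^2+C\|\mu\|_{L^\infty}N^{-\beta}+1$, but a bare $\|\mu\|_{L^\infty}$ on the left cannot be absorbed into the right-hand side, which only carries $\|\mu\|_{L^\infty}N^{-\beta}$: take $N$ large and $\|\mu\|_{L^\infty}$ large and the inequality fails. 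The paper's proof instead invokes the renormalized Cauchy--Schwarz inequality (Lemma~\ref{lem:renormalized_Holder}), which bounds $\int\g\,\diff(\mu_N-\mu)\,\diff\mu$ by $C(F_N+C\|\mu\|_{L^\infty}N^{-\beta})^{1/2}\|\mu\|_{\H}+C\|\mu\|_{L^\infty}N^{-\beta}$; after a Young's inequality this lets one absorb the $F_N^{1/2}$ factor and obtain exactly the bound $F_N\lesssim H_N+\|\mu\|_{\H}^2+C\|\mu\|_{L^\infty}N^{-\beta}$ that you need. The same lossiness appears in your claimed bound $\|\Phi_\psi\|_{\dot H^{(d-\s)/2}}\lesssim A_\psi(\|\mu\|_{\H}+\|\mu\|_{L^\infty}^{1/2})$; the correct bound coming out of the smooth commutator estimate (Proposition~\ref{prop:commutator-estimate}) has a constant $1$ in place of $\|\mu\|_{L^\infty}^{1/2}$, and the $\|\mu\|_{L^\infty}^{1/2}$ version would again produce an unabsorbable term. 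In short: the proposition is sharp in that $\|\mu\|_{L^\infty}$ only enters multiplied by $N^{-\beta}$, and your proposal breaks that at two places.
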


The bound~\eqref{eq:old-commutator-estimate} is a version of the commutator estimate~\cite[Proposition 1.1]{serfaty_mean_2020} adapted to sub-Coulomb periodic Riesz potentials. In~\cite{serfaty_mean_2020},~\eqref{eq:old-commutator-estimate} is used to bound a term in the derivative of $F_N(\ux_N^t,\mu^t)$ by itself and complete the Gr\"onwall argument. The second inequality~\eqref{eq:renormalized_commutator_estimate} is an easy consequence of~\eqref{eq:old-commutator-estimate}. As both estimates are proved using standard renormalization ideas, we give their proofs in Appendix~\ref{appendix}.

The second inequality in Proposition~\ref{prop:renormalized_commutator_estimate} implies that if $\mu\in L^\infty([0,T],L^\infty(\T^d))$, the discrete energies $H_N(\ux_N^t)$ are uniformly bounded over $N$ and $t$, and
\begin{equation}\label{eq:modulated-energy-convergence-overview}
\int_0^T F_N(\ux_N^t,\mu^t)\,\diff
t\rightarrow 0,
\end{equation}
then the last term in $S(\ux_N,\phi)$ converges to the last term in $S(\mu,\phi)$ for any smooth $\phi$. The convergence~\eqref{eq:modulated-energy-convergence-overview} holds given that $\mu_N\rightarrow \mu$ in $C([0,T],\Pc(\T^d))$ and $\int_0^T D_N(\ux_N^t)\,\diff t$ are uniformly bounded by interpolating between the Wasserstein-1 metric and the discrete enstrophy $D_N$. As uniform bounds on $Q_N$ imply uniform bounds on both the discrete energies and discrete enstrophies, we find that if $\mu_N\rightarrow\mu$ in  $C([0,T],\Pc(\T^d))$ and $Q_N(\ux_N)\leq L$, then
$S_N(\ux_N,\phi)$ converges to $S(\mu,\phi).$ It is critical that the prefactor before the enstrophy in the \textit{a priori} bounds is uniformly bounded away from zero, thus we cannot handle vanishing noise as in Dyson Brownian motion. When $\mu$ is not in $L^\infty([0,T],L^\infty(\T^d))$, we adapt this argument by appropriately mollifying $\mu$.

\subsubsection{Mean-field limit for regular perturbations}

We prove that the empirical trajectories of~\eqref{eq:SDE+b} converge to~\eqref{eq:mve+b} using a modulated energy argument. This uses~\eqref{eq:old-commutator-estimate} and is similar to the proof of mean-field convergence in~\cite{rosenzweig_global--time_2023}, although we combine Gr\"onwall’s inequality with Doob’s martingale inequality to give almost sure uniform bounds in time. That is, when $\mu$ with associated vector field $b$ are sufficiently regular and Assumption~\ref{cond:initial-convergence} holds we prove that
\[\lim_{N\rightarrow\infty}\sup_{t\in[0,T]} F_N(\ux_N^t,\mu^t)=0\quad \text{almost surely},\]
as opposed to
\[\lim_{N\rightarrow\infty}\sup_{t\geq 0}\E\Big[F_N(\ux_N^t,\mu^t)\Big]=0,\]
as in~\cite{rosenzweig_global--time_2023}. The argument again
uses the restriction that $\s<d-2$ to bound an It\^o correction term involving $(-\Delta)\g$ from below. This mean-field limit with respect to the modulated energy is critical for showing Theorem~\ref{thm:local-LDP}.

\subsubsection{Approximating sequences}

After using the mean-field limit and tilting to establish a local lower bound for regular perturbations of the system we expand the class of measure trajectories for which the local lower bounds hold to $\mu\in\A$. In particular, we show that if $\mu\in\A$ and $I_\gamma(\mu)<\infty$ then there exists a sequence of measure trajectories $\nu_\eps$ converging to $\mu$ in $C([0,T],\Pc(\T^d))$ as $\eps\rightarrow 0$ so that the local lower bound holds for all $\nu_\eps$ and 
\begin{equation}\label{eq:rate_funciton_convergence}
    \lim_{\eps\rightarrow 0} I_\gamma(\nu_\eps)=I_\gamma(\mu).
\end{equation}
This allows us to recover the lower bound for $\mu$. These $\nu_\eps$ are essentially space mollifications of $\mu$ by the heat kernel $\Phi^\eps$. It is here that we use that we are on the torus as it guarantees that space mollifications of solutions to~\eqref{eq:mve+b} are also solutions~\eqref{eq:mve+b}, but with regular drifts.  

Letting $(f)_\eps$ denote $f*\Phi^\eps$, showing~\eqref{eq:rate_funciton_convergence} reduces to proving that
\begin{equation}\label{eq:intro_commutator}\int_0^T\int_{\T^d}\bigg|\frac{(\mu^t\nabla\g*\mu^t)_\eps}{(\mu^t)_\eps}-\nabla\g*(\mu^t)_\eps\bigg|^2\,\diff(\mu^t)_\eps\,\diff t\rightarrow 0,
\end{equation}
as $\eps\rightarrow 0$. $\A$ is the largest class of measure trajectories for which we can show~\eqref{eq:intro_commutator} since it is the largest class for which we can make sense of $\int_0^T \int|\nabla\g*\mu^t|^2\,\diff\mu^t\,\diff t$ as a pairing of three Sobolev distributions. We verify that when $\s=0$, $\mu\in\A$ whenever $Q(\mu)<\infty$, hence the lower bound holds for all admissible trajectories.

\subsubsection{Layout of paper} In Section~\ref{sec:a_priori_bounds} we show that the SDE~\eqref{eq:SDE} is well-posed and prove the exponential bounds on the probability $Q_N(\ux_N)$ is large. In Section~\ref{sec:continuity_of_S} we show that $S_N(\ux_N,\phi)$ converges to $S(\mu,\phi)$ when $Q_N(\ux_N)$ is uniformly bounded. We also show that $I_\gamma$ is a good rate function. The exponential tightness of $\mu_N$, the upper bound of Theorem~\ref{thm:LDP}, and the first inequality of Theorem~\ref{thm:local-LDP} are given in Section~\ref{sec:upper_bounds}. We prove a quantitative mean-field limit for regular perturbations of~\eqref{eq:SDE} in Section~\ref{sec:mean_field_limit}. The lower bound of Theorem~\ref{thm:LDP} and the second inequality of Theorem~\ref{thm:local-LDP} are in Section~\ref{sec:lower_bounds}. Finally, the proof of Proposition~\ref{prop:renormalized_commutator_estimate} and related estimates are given in Appendix~\ref{appendix}.

\subsection{Notation}
We use the following notation and conventions throughout the rest of the paper.
\begin{itemize}
\item Unless ambiguous, we drop the domain $\T^d$ in spaces and norms.
\item We let $\Mc(\T^d)$ denote the space of signed Borel measures on $\T^d$ with bounded total variation.
\item For the sake of brevity $\Cs^T:=C([0,T],\Pc(\T^d))$ throughout.
\item  As is convention, we allow $C$ to be a large constant that changes line by line. Both $\beta$ and $C$ are always allowed to depend on $d$ and $\s$. For a set of parameters $\Theta$ we let $C_\Theta$ be a large constant depending on $\Theta.$
\end{itemize}
\subsection{Acknowledgements} The author would like to thank their doctoral advisor, Sylvia Serfaty, for their discussions, guidance, and support. They would also like to thank G\'erard Ben Arous, Philip Gaddy, Matthew Rosenzweig, Keefer Rowan, and Ofer Zeitouni for their insightful discussions and helpful suggestions. The author acknowledges the support of the Natural Sciences and Engineering Research Council of Canada (NSERC), [funding reference number PGSD3-557776-2021]. Cette recherche a \'et\'e financ\'ee par le Conseil de recherches en sciences naturelles et en g\'enie du Canada (CRSNG), [num\'ero de r\'ef\'erence PGSD3-557776-2021].

\section{Existence and energy bounds}\label{sec:a_priori_bounds}

In this section, we give exponential bounds on the probability that $Q_N(\ux_N)$ is large. These are \textit{a priori} bounds derived from the gradient flow structure of~\eqref{eq:SDE}. Formally, the argument proceeds by applying It\^o's formula to $H_N(\ux_N^t)$ when $\ux_N^t$ solves~\eqref{eq:SDE} to find that
\begin{align*}&H_N(\ux_N^t)+2\sigma\int_0^tD_N(\ux_N^\tau)\,\diff \tau-H_N(\ux_N^0)
\\&\quad=\frac{2\sqrt{2\sigma}}{N}\sum_{i=1}^N\int_0^t \bigg(\frac{1}{N}\sum_{1\leq j\leq N:j\neq i}\nabla\g(x_i-x_j)\bigg)\cdot\,\diff w_i^\tau-2\int_0^t\frac{1}{N}\sum_{i=1}^N\bigg|\frac{1}{N}\sum_{1\leq j\leq N:j\neq i}\nabla\g(x_i-x_j)\bigg|^2\,\diff\tau.
\end{align*}
The right-hand side is equal to $M^t-\frac{N}{4\sigma}\langle M\rangle^t$ for some martingale $M^t$, thus we can give exponential bounds on the probability the left-hand side is ever large on $[0,T]$ using Doob's martingale inequality.

This formal argument is complicated by the fact that $\g$ is singular, and we instead consider systems interacting via smooth truncations of $\g$. Our discussion is guided by the proofs in~\cite[Lemma 4.33]{anderson_introduction_2009} and~\cite{rosenzweig_global--time_2023} that the SDE~\eqref{eq:SDE} is well-posed. We take advantage of the fact that systems of particles interacting via a smooth truncation of $\g$ that is equal to $\g(x)$ when $|x|\geq \delta$ agree with the solution of~\eqref{eq:SDE} as long as all the particles remain more than a distance $\delta$ from each other. Since the energy of these truncated systems also gets large when any two particles get close, their natural \textit{a priori} bounds show that as $\delta\rightarrow 0$ the probability any two particles are closer than $\delta$ also goes to zero and thus the untruncated system is the limit of the truncated systems.

The following exponential version of Doob's martingale inequality is crucial for the desired exponential bounds and is also important for proving the mean-field limit later in the paper. The proof is standard, see~\cite[Lemma 3.6]{chen_sample-path_2022}.

\begin{lemma}\label{lemma:exponential-doobs} Let $M^t$ be a positive continuous martingale. Then for any $L\in\R$
\[\P\bigg(\sup_{t\in[0,T]}\log M^t\geq L\bigg)\leq \E[M^0]e^{-L}.\]
\end{lemma}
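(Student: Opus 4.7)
The plan is to reduce the statement to the classical $L^1$ Doob maximal inequality for nonnegative submartingales. Because the logarithm is strictly monotone, the event of interest can be rewritten as
\[\bigg\{\sup_{t\in[0,T]}\log M^t\geq L\bigg\}=\bigg\{\sup_{t\in[0,T]} M^t\geq e^L\bigg\}.\]
Since $M^t$ is a positive continuous martingale, it is in particular a nonnegative continuous submartingale, so Doob's $L^1$ maximal inequality applies on $[0,T]$ to give
\[\P\bigg(\sup_{t\in[0,T]} M^t\geq e^L\bigg)\leq e^{-L}\,\E[M^T].\]
Invoking the martingale identity $\E[M^T]=\E[M^0]$ then yields the desired bound.

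There is essentially no obstacle in this argument, which is why the paper defers to a citation. The only minor technicalities are (i) that we are in continuous time, which is handled directly by the continuity hypothesis on $M^t$ (so Doob's inequality for continuous-time submartingales applies without needing to pass through a right-continuous modification or a discrete-time approximation), and (ii) integrability: if $\E[M^0]=+\infty$ the bound is vacuous, while if $\E[M^0]<\infty$ the martingale property guarantees $\E[M^t]<\infty$ for all $t\in[0,T]$, so all quantities appearing in Doob's inequality are finite and the manipulation above is rigorous.
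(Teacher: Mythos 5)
Your proof is correct and is exactly the standard argument the paper alludes to when it cites \cite[Lemma 3.6]{chen_sample-path_2022} and calls the proof ``standard'': rewrite the event via monotonicity of the logarithm, apply Doob's $L^1$ maximal inequality for nonnegative right-continuous (here continuous) submartingales, and use $\E[M^T]=\E[M^0]$. The only superfluous remark is your discussion of the case $\E[M^0]=+\infty$: since $M$ is assumed to be a martingale, integrability of $M^0$ is part of the definition, so that case does not arise.
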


We also need the following proposition to show that Assumption~\ref{cond:initial-convergence} implies that $H_N(\ux_N^0)\rightarrow \Ec(\gamma)$ as $N\rightarrow \infty$.

\begin{lemma}\label{lem:renormalized_Holder} There exists $C,\beta>0$ so that for any $\ux_N\in (\T^d)^N$ pairwise distinct and $\mu,\nu\in \Pc(\T^d)\cap L^\infty(\T^d)$
\begin{align*}
&\bigg|\int_{(\T^d)^2\setminus\Delta}\g(x-y)\,\diff (\mu_N-\mu)(x)\,\diff (\mu-\nu)(y)\bigg|
\\&\quad\leq C\Big(F_N(\ux_N,\mu)+C\|\mu\|_{L^\infty(\T^d)}N^{-\beta}\Big)\|\mu-\nu\|_{\H(\T^d)}+C\|\mu-\nu\|_{L^\infty(\T^d)}N^{-\beta}.
\end{align*}
\end{lemma}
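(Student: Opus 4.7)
The plan is to introduce a mesoscopic smearing of $\mu_N$ at a scale $\eta = \eta(N) > 0$ to be optimized, exploit Plancherel/Cauchy--Schwarz in $\H = \dot H^{(\s-d)/2}$, and then control the smeared Sobolev norm by the modulated energy. Let $\rho^{(\eta)}$ be a suitable radial smearing kernel at scale $\eta$ (uniform measure on $\partial B_\eta(0)$ for the Coulomb case, with the standard Riesz-adapted modification otherwise), and set $\mu_N^{(\eta)} := \mu_N * \rho^{(\eta)}$ and $\g^{(\eta)} := \g * \rho^{(\eta)}$. Because $\mu$ and $\mu - \nu$ are bounded densities, removal of the diagonal is inessential; I would split
\[
I := \int_{(\T^d)^2\setminus\Delta}\g(x-y)\,\diff(\mu_N-\mu)(x)\,\diff(\mu-\nu)(y) = I_1 + I_2,
\]
where $I_1$ is obtained by replacing $\mu_N$ with $\mu_N^{(\eta)}$ and $I_2 = \int \g\,\diff(\mu_N - \mu_N^{(\eta)})(x)\,\diff(\mu-\nu)(y)$.

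For $I_1$, Plancherel combined with Cauchy--Schwarz in $\ell^2$ with weights $(2\pi|k|)^{\s-d}$ gives $|I_1| \leq \c_{d,\s}\|\mu_N^{(\eta)}-\mu\|_\H\|\mu-\nu\|_\H$. The smeared Sobolev norm is then tied to $F_N$ via the renormalization identity
\[
\c_{d,\s}\|\mu_N^{(\eta)} - \mu\|_\H^2 = \int \g\,\diff(\mu_N^{(\eta)} - \mu)^{\otimes 2} \leq F_N(\ux_N,\mu) + C\|\mu\|_{L^\infty}\bigl(\eta^{-\s}/N + \eta^{d-\s}\bigr),
\]
the two errors arising respectively from the restored diagonal self-energies $\g^{(2\eta)}(0)/N \sim \eta^{-\s}/N$ and from a short-range commutator estimate against $\mu$, each exploiting the local profile~\eqref{eq:periodic-correction} of $\g$ at the origin. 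For $I_2$, rewrite $I_2 = \tfrac{1}{N}\sum_i \int (\g - \g^{(\eta)})(y - x_i)\,\diff(\mu-\nu)(y)$ and estimate crudely by $|I_2| \leq \|\mu-\nu\|_{L^\infty}\,\|\g - \g^{(\eta)}\|_{L^1} \leq C\|\mu-\nu\|_{L^\infty}\eta^{d-\s}$. Setting $\eta = N^{-1/d}$ balances the two $\eta$-dependent errors at $N^{-(d-\s)/d} =: N^{-\beta}$.

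Assembling these estimates gives
\[
|I| \leq C\bigl(F_N(\ux_N,\mu) + C\|\mu\|_{L^\infty}N^{-\beta}\bigr)^{1/2}\|\mu-\nu\|_\H + C\|\mu-\nu\|_{L^\infty}N^{-\beta},
\]
from which the stated bound follows upon using $X^{1/2} \leq 1+X$ together with the bounded-torus embedding $\|\mu-\nu\|_\H \lesssim \|\mu-\nu\|_{L^\infty}$ to absorb the residual constant into the last term. The main technical obstacle is the renormalization step: verifying that the errors in the comparison $\c_{d,\s}\|\mu_N^{(\eta)} - \mu\|_\H^2 - F_N(\ux_N,\mu)$ truly are bounded by $C\|\mu\|_{L^\infty}(\eta^{-\s}/N + \eta^{d-\s})$ requires careful tracking of the local Riesz singularity, matching short-range terms coming from the Dirac self-energies against the short-range behavior of $\g^{(2\eta)}$. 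This uses the same machinery developed in Appendix~\ref{appendix} for Proposition~\ref{prop:renormalized_commutator_estimate}; the present situation is in fact structurally simpler since only one factor of the tensor product carries the Dirac singularity, so that no double smearing on both arguments is required.
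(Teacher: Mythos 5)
Your decomposition into $I_1$ and $I_2$ via the smeared $\mu_N^{(\eta)}$, the Cauchy--Schwarz step in $\H$, the control of $\|\mu_N^{(\eta)}-\mu\|_{\H}^2$ by the modulated energy via Proposition~\ref{prop:modulated_monotonicity}, and the choice $\eta=N^{-1/d}$ are exactly the paper's route. One minor inaccuracy: $\|\g-\g^{(\eta)}\|_{L^1(\T^d)}$ is $O(\eta)$, not $O(\eta^{d-\s})$, because the far-field contribution from $|x|>2\eta$ is already $\eta\int_{2\eta}^{1/2} r^{d-2-\s}\,\diff r=O(\eta)$, and since $\s<d-2$ the near-field contribution $O(\eta^{d-\s})$ is subdominant. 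This does not change the conclusion (both are positive powers of $\eta$); the paper reaches the same $O(\eta)$ by instead bounding $I_2$ through $\|\nabla\g*(\mu-\nu)\|_{L^\infty}\eta\le\|\nabla\g\|_{L^1}\|\mu-\nu\|_{L^\infty}\eta$ with $\nabla\g\in L^1(\T^d)$.

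The genuine gap is the final absorption. You correctly reach
\[
|I|\le C\big(F_N(\ux_N,\mu)+C\|\mu\|_{L^\infty}N^{-\beta}\big)^{1/2}\|\mu-\nu\|_{\H}+C\|\mu-\nu\|_{L^\infty}N^{-\beta},
\]
but the passage from the square root to the stated linear dependence on $F_N$ via $X^{1/2}\le 1+X$ fails: it produces a leftover $\|\mu-\nu\|_{\H}$ carrying no factor of $N^{-\beta}$, and the embedding $\|\mu-\nu\|_{\H}\lesssim\|\mu-\nu\|_{L^\infty}$ cannot supply one, so that residual cannot be absorbed into $C\|\mu-\nu\|_{L^\infty}N^{-\beta}$. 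Note that the paper's own proof of this lemma ends precisely at the square-root inequality above, and every downstream use of the lemma (in Proposition~\ref{prop:S_N-continuity}, in the Appendix proof of Proposition~\ref{prop:renormalized_commutator_estimate}, and in the proof of the second inequality of Theorem~\ref{thm:local-LDP}) combines it with Young's inequality, which is the natural move only for the square-root form. The version of the bound without the square root, as displayed in the statement, appears to be a transcription slip; your argument is correct up to, and should stop at, the square-root inequality.
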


This is a renormalized version of the Cauchy--Schwarz inequality. We defer the proof to Subsection~\ref{subsec:modulated_energy inequalities} of the Appendix.

As they are important for the LDP lower bound, we show the unique existence of solutions to the perturbed systems~\eqref{eq:SDE+b} when the vector field $b$ is sufficiently regular.

\begin{proposition}\label{prop:energy-estimate} If $b\in L^2([0,T],C^1(\T^d))$ and $\ux_N^0$ are pairwise distinct, then the stochastic differential equation~\eqref{eq:SDE+b} admits a unique strong (and weak) solution so that
$Q_N(\ux_N)$ is almost surely finite. More so, if Assumption~\ref{cond:initial-convergence} holds and $\ux_N$ is the unique solution to~\eqref{eq:SDE}, then
\begin{equation}\label{eq:energy-control}
\limsup_{N\rightarrow\infty}\frac{1}{N}\log\P\Big(Q_N(\ux_N)\geq L\Big)\leq-\frac{1}{2\sigma}\big(L-\Ec(\gamma)\big).
\end{equation}
\end{proposition}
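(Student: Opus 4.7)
I would follow the scheme sketched before the proposition: regularize $\g$ via a smooth truncation $\g_\delta$ agreeing with $\g$ outside $B_\delta(0)$, analyze the well-posed truncated SDE, identify It\^o's formula applied to the truncated energy as a Dol\'eans--Dade exponential of a martingale, and invoke Lemma~\ref{lemma:exponential-doobs}. Well-posedness of \eqref{eq:SDE+b} and the bound \eqref{eq:energy-control} then follow by sending $\delta\to 0$, exploiting that an energy bound on $H_N^\delta$ forces the minimum inter-particle distance to stay positive.

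\textbf{It\^o on the truncated system.} For $\delta>0$, the truncated SDE has Lipschitz coefficients and admits a unique strong global solution $\ux_N^\delta$. Applying It\^o's formula to $t\mapsto H_N^\delta(\ux_N^{\delta,t}):=\frac{1}{N^2}\sum_{i\neq j}\g_\delta(x_i^{\delta,t}-x_j^{\delta,t})$, using the elementary identity $\sum_{i=1}^N\Delta_{x_i}H_N^\delta=-2D_N^\delta$ for the It\^o correction and symmetrizing the gradient-flow drift, one finds in the unperturbed case
\begin{align*}
H_N^\delta(\ux_N^{\delta,t})-H_N^\delta(\ux_N^0)+2\sigma\int_0^t D_N^\delta(\ux_N^{\delta,\tau})\,d\tau = M^{\delta,t}-\tfrac{N}{4\sigma}\langle M^\delta\rangle^t,
\end{align*}
where $M^\delta$ is a continuous martingale whose quadratic variation (arising from the gradient-flow drift) is bounded for fixed $\delta$. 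Setting $\lambda:=N/(2\sigma)$, one has $\lambda\cdot\tfrac{N}{4\sigma}=\tfrac{\lambda^2}{2}$, so $\lambda$ times the left-hand side equals $\lambda M^{\delta,t}-\tfrac{\lambda^2}{2}\langle M^\delta\rangle^t$; its exponential is the Dol\'eans--Dade exponential of $\lambda M^\delta$, a true martingale of unit initial expectation. Lemma~\ref{lemma:exponential-doobs} then delivers
\begin{align*}
\P\big(Q_N^\delta(\ux_N^\delta)\geq L\big)\leq \exp\!\big(-\tfrac{N}{2\sigma}(L-H_N^\delta(\ux_N^0))\big).
\end{align*}

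\textbf{Limit $\delta\to 0$ and perturbed case.} On $\{Q_N^\delta\leq L\}$, the bound $H_N^\delta\leq L$ together with the local behaviour \eqref{eq:periodic-correction} and the uniform lower bound on $\g$ (zero mean, smooth away from $0$) forces the minimum pair distance to stay $\geq r(L,N)>0$, uniformly in $t\in[0,T]$ and in $\delta$. For $\delta<r(L,N)$ the truncated and untruncated dynamics coincide on this event, so the family $\{\ux_N^\delta\}_\delta$ is consistent across $\delta$ and has a common limit $\ux_N$ solving \eqref{eq:SDE+b} on events of probability tending to $1$ as $L\to\infty$; pathwise uniqueness of the truncated systems transfers to uniqueness in the limit. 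Since $\ux_N^0$ is pairwise distinct, $H_N^\delta(\ux_N^0)=H_N(\ux_N^0)$ for all small $\delta$, and Assumption~\ref{cond:initial-convergence} gives $H_N(\ux_N^0)\to\Ec(\gamma)$ (polarizing $F_N(\ux_N^0,\gamma)$ and testing $\mu_N^0$ against the continuous function $\g*\gamma$). Taking $\delta\to 0$ and $N\to\infty$ in the exponential bound proves \eqref{eq:energy-control}. For general $b\in L^2([0,T],C^1)$, the It\^o calculation acquires the additional drift $\frac{2}{N}\sum_i b^t(x_i^\delta)\cdot\frac{1}{N}\sum_{j\neq i}\nabla\g_\delta(x_i^\delta-x_j^\delta)$, which Young's inequality absorbs into half of the gradient-flow dissipation plus $\int_0^T\|b^\tau\|_{L^\infty}^2\,d\tau$; the same Dol\'eans--Dade/Doob argument then produces $\delta$-uniform almost sure bounds on $Q_N^\delta$, yielding the almost sure finiteness of $Q_N$ in the limit.

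\textbf{Main obstacle.} The delicate step is the truncation passage: one must verify that the energy bound on $Q_N^\delta$ yields a quantitative minimum-distance bound $r(L,N)$ that is genuinely $\delta$-independent, so that the truncated processes stabilize on events of probability close to $1$, and that the exponential estimate is preserved in the limit without loss. This crucially relies on $\s<d-2$, which keeps $(-\Delta)\g$ locally integrable and hence keeps $D_N^\delta$ bounded below uniformly in $\delta$, ensuring that the It\^o correction remains controllable under truncation; this is precisely the sub-Coulomb constraint that distinguishes the present setting from Coulomb and super-Coulomb cases.
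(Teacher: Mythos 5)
Your plan and the paper's proof share the same skeleton: truncate $\g$ to $\g_\delta$, apply It\^o to the truncated energy, recognize a Dol\'eans--Dade exponential, invoke the exponential Doob inequality, and then remove the truncation. The drift term is also handled the same way, via Young's inequality absorbing a fraction of the dissipation. However, the step you flag as the ``main obstacle'' is precisely where your argument has a genuine gap, and as written it would not close.

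The claim that on $\{Q_N^\delta\leq L\}$ the bound $H_N^\delta\leq L$ forces a minimum pair distance $\geq r(L,N)$ \emph{uniformly in $\delta$} is false for the truncated energy. The truncated potential $\g_\delta(x)=(1-\chi(x/\delta))\g(x)$ vanishes for $|x|\leq\delta/2$, so two particles at distance $\ll\delta$ contribute nothing to $H_N^\delta$; a bounded truncated energy therefore says nothing $\delta$-uniform about the minimum separation. It is only the \emph{untruncated} energy $H_N$ that blows up at collision, and you cannot invoke $H_N$ until you already know the untruncated dynamics exists on the event in question — which is exactly what is to be proved. This circularity is broken in the paper by a stopping time $\tau_{N,\delta}:=\inf\{t:\min_{i\neq j}|x_{i,\delta}^t-x_{j,\delta}^t|\leq 2\delta\}$. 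Up to $\tau_{N,\delta}$ the truncated and untruncated systems (and their energies and enstrophies) literally coincide, so the exponential Doob bound controls $\sup_{t\leq T\wedge\tau_{N,\delta}}Q_N^t$ in terms of the untruncated functionals. Then one observes that if $\tau_{N,\delta}\leq T$, the untruncated energy at $\tau_{N,\delta}$ is at least of order $\tfrac{1}{N^2}\min_{|x|\leq 2\delta}\g(x)$ minus a constant (here the lower bound on $(-\Delta)\g$, i.e.\ $\s<d-2$, controls the enstrophy term), which diverges as $\delta\to 0$; plugging this level into the exponential bound yields $\P(\tau_{N,\delta}\leq T)\to 0$, and Borel--Cantelli along a subsequence $\delta_k\to 0$ gives $\tau_{N,\delta}\to\infty$ almost surely, so the $\ux_{N,\delta}$ stabilize pathwise. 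Without introducing the stopping time (or an equivalent localization device that keeps the truncated and untruncated energies identified), your passage to the limit does not go through.

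A secondary, smaller gap: you assert that the family $\{\ux_N^\delta\}_\delta$ ``has a common limit on events of probability tending to $1$ as $L\to\infty$,'' but you need almost sure stabilization in $\delta$ (for fixed $N$) to define a single solution process; the Borel--Cantelli step is what delivers this. The remaining ingredients of your proposal — the Young's inequality absorption of the drift with parameter $\alpha$, the observation that Assumption~\ref{cond:initial-convergence} yields $H_N(\ux_N^0)\to\Ec(\gamma)$, and the final optimization $\alpha\to 0$, $\delta\to 0$ — match the paper.
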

\begin{proof}
Let $\chi$ be a smooth function so that $0\leq \chi\leq 1$ and
\[
\chi(x)=\begin{cases}
0&|x|\geq 1,\\
1&|x|\leq \frac{1}{2}.
\end{cases}
\]
Then $\g_\delta(x):=(1-\chi(\frac{x}{\delta}))\g(x)$ is a smooth truncation of $\g$ satisfying $\g_\delta(x)=\g(x)$ for $|x|\geq \delta.$ There is thus a unique strong solution to the stochastic differential equation
\begin{equation}\label{eq:truncated-SDE}
\begin{cases}
\diff x_{i,\delta}^t=-\frac{1}{N}\sum\limits_{1\leq  j\leq N;j\neq i}\nabla\g_\delta(x_{i,\delta}^t-x_{j,\delta}^t)\,\diff t+b^t(x_{i,\delta}^t)\,\diff t+\sqrt{2\sigma}\,\diff w_i^t,\\
x_{i,\delta}^t|_{t=0}=x_i^0.
\end{cases}
\end{equation}

Let $\tau_{N,\delta}$ be the stopping time defined by
\[
\tau_{N,\delta}:=\inf\big\{t\geq 0\mid\min_{i\neq j} |x_{i,\delta}^t-x_{j,\delta}^t|\leq 2\delta\big\}.
\]
Then when $0<t\leq \tau_{N,\delta}$ it holds that for all $k\geq 0$ and $1\leq i\neq j\leq N$
\[
 \nabla^k\g_\delta(x_{i,\delta}^t-x_{j,\delta}^t)=\nabla^k\g(x_{i,\delta}^t-x_{j,\delta}^t).
\]
As a consequence, when $\delta'<\delta$, $\tau_{N,\delta}\leq \tau_{N,\delta'}$ and $\ux_{N,\delta}^t=\ux_{N,\delta'}^t$ if $t\leq\tau_{N,\delta}$.
Setting
\[
H_{N,\delta}(\ux_N):=\frac{1}{N^2}\sum_{1\leq i\neq j\leq N} \g_\delta(x_i-x_j),
\]
it is also clear that $H_{N,\delta}(\ux_{N,\delta})=H_N(\ux_{N,\delta})$ when $t<\tau_{N,\delta}.$

We proceed by applying It\^o's formula to $H_{N,\delta}(\ux_{N,\delta})$ to find 
\begin{align}\label{eq:truncIto}
\notag H_{N,\delta}(\ux_{N,\delta}^t)&=H_{N,\delta}(\ux_N^0)-2\int_0^t \frac{1}{N}\sum_{i=1}^N\Bigg|\frac{1}{N}\sum_{1\leq j\leq N:j\neq i}\nabla\g_\delta(x_{i,\delta}^\tau-x_{j,\delta}^\tau)\Bigg|^2\,\diff \tau
\\&\notag\quad+2\int_0^t\frac{1}{N}\sum_{i=1}^N\bigg(\frac{1}{N}\sum_{1\leq j\leq N:j\neq i}\nabla\g_\delta(x_{i,\delta}^\tau-x_{j,\delta}^\tau)\bigg)\cdot b^t(x_{i,\delta}^\tau)\,\diff \tau
\\&\notag\quad +2\sigma \int_0^t\frac{1}{N^2}\sum_{1\leq i\neq j\leq N}\Delta\g_\delta(x_{i,\delta}^\tau-x_{j,\delta}^\tau)\,\diff \tau
\\&\quad+\frac{2\sqrt{2\sigma}}{N}\sum_{i=1}^N\int_0^t\Bigg(\frac{1}{N}\sum_{1\leq j\leq N:j\neq i}\nabla\g_\delta(x_{i,\delta}^\tau-x_{j,\delta}^\tau)\Bigg)\cdot\diff w_i^\tau.
\end{align}
The last term on the right-hand side of \eqref{eq:truncIto} is a martingale with respect to the filtration generated by the noise, which we denote by $M^t$. It has bounded quadratic variation
\[
\langle M\rangle^t=\frac{8\sigma}{N}\int_0^t \frac{1}{N}\sum_{i=1}^N\Bigg|\frac{1}{N}\sum_{1\leq j\neq N:j\neq i}\nabla\g_\delta(x_{i,\delta}^\tau-x_{j,\delta}^\tau)\Bigg|^2\,\diff \tau.
\]

Young's inequality implies that for any $\alpha>0$
\begin{align}\label{eq:drift_Youngs}
\notag&2\bigg|\int_0^t\frac{1}{N}\sum_{i=1}^N\bigg(\frac{1}{N}\sum_{1\leq j\neq N:j\neq i}\nabla\g_\delta(x_{i,\delta}^\tau-x_{j,\delta}^\tau)\bigg)\cdot b(x_{i,\delta}^\tau)\,\diff \tau\bigg|
\\\notag&\quad\leq 2\alpha\int_0^t \frac{1}{N}\sum_{i=1}^N\bigg|\frac{1}{N}\sum_{1\leq j\neq N:j\neq i}\nabla\g_\delta(x_{i,\delta}^\tau-x_{j,\delta}^\tau) \bigg|^2\,\diff \tau+\frac{1}{2\alpha}\int_0^t\frac{1}{N}\sum_{i=1}^N|b^\tau(x_{i,\delta}^\tau)|^2\,\diff \tau
\\&\quad\leq 2\alpha\int_0^t \frac{1}{N}\sum_{i=1}^N\bigg|\frac{1}{N}\sum_{1\leq j\neq N:j\neq i}\nabla\g_\delta(x_{i,\delta}^\tau-x_{j,\delta}^\tau) \bigg|^2\,\diff \tau+\frac{1}{2\alpha}\|b\|_{L^2_tC^1_x}^2,
\end{align}
where $L^2_tC^1_x:=L^2([0,T],C^1(\T^d)).$ Setting
\[
Q_N^t(\ux_N):=H_N(\ux_N^t)-2\sigma\int_0^t D_N(\ux_N^\tau)\,\diff \tau,
\]
rearranging~\eqref{eq:truncIto} and using~\eqref{eq:drift_Youngs} we have found that for $0<t\leq\tau_{N,\delta}$
\begin{equation}\label{eq:energy_martingale_equality}Q_N^t(\ux_{N,\delta})-H_N(\ux_N^0)-\frac{1}{2\alpha}\|b\|^2_{L^2_t C^1_x}\leq M^t-\tfrac{N}{4\sigma}(1-\alpha)\langle M\rangle^t.
\end{equation}

For all $\lambda\in\R$, $\exp(\lambda M^t-\frac{\lambda^2}{2}\langle M\rangle^t)$ is a continuous martingale with
\[\E\Big[\exp\Big(\lambda M^0-\frac{\lambda^2}{2}\langle M\rangle^0\Big)\Big]=1.\]
Thus when $\alpha<1$ by setting $\lambda=\frac{N}{2\sigma}(1-\alpha)$ and applying Lemma~\eqref{lemma:exponential-doobs} to~\eqref{eq:energy_martingale_equality} we find that
\begin{equation}\label{align:energydoobs}
\P\bigg(\sup_{t\in[0, T\wedge\tau_{N,\delta}]} Q_N^t(\ux_{N,\delta})\geq L\bigg)\leq \exp\Big(-\frac{N}{2\sigma}(1-\alpha)\big(L-H_N(\ux_0)-\frac{1}{2\alpha}\|b\|^2_{L^2_t C^1_x}\big)\Big)
\end{equation}
for all $L>0$.

As $\g$ and $(-\Delta)\g$ are both bounded below there exists a constant $C>0$ so that if $\tau_{N,\delta}\leq T$, then
\[
 Q_N^{\tau_{N,\delta}}(\ux_{N,\delta})\geq H_N(\ux_{N,\delta}^{\tau_{N,\delta}})-C\sigma T \geq \frac{\min_{|x|\leq 2\delta}\g(x)}{N^2 }-C(1+\sigma T).
\]
The second inequality above follows since if $\tau_{N,\delta}\leq T$ then there must exist a pair of indices $k\neq \ell$ so that $|x_{k,\delta}^{\tau_{N,\delta}}-x_{\ell,\delta}^{\tau_{N,\delta}}|\leq 2\delta$.
Setting 
\[f(\delta):= \frac{\min_{|x|\leq 2\delta}\g(x)}{N^2}-C(1+\sigma T),\]
then $f(\delta)\rightarrow\infty$ as $\delta\rightarrow 0$. Using~\eqref{align:energydoobs} with $L=f(\delta)$ we thus find that
\begin{align*}
\P\big(\tau_{N,\delta}\leq T \big)\leq \P\Big( Q_N^{\tau_{N,\delta}}(\ux_{N,\delta})\geq f(\delta)\Big)\leq \exp\Big(-\frac{N}{2\sigma}(1-\alpha)\big(f(\delta)-H_N(\ux_0)-\frac{1}{2\alpha}\|b\|^2_{L^2_t C^1_x}\big)\Big).
\end{align*}
As the right-hand side above converges to 0 as $\delta\rightarrow 0$, we can find a sequence $\delta_k\rightarrow 0$ so that
\[
\sum_{k\geq 1} \P\Big(\tau_{N,\delta_k}\leq T \Big)<\infty.
\]
The Borell-Cantelli lemma with the monotonicity of $\tau_{N,\delta}$ in $\delta$ imply that $\lim_{\delta\rightarrow 0}\tau_{N,\delta}>T$ almost surely. Since $\ux_{N,\delta}^t$ and $\ux_{N,\delta'}^t$ agree on $0<t\leq \tau_{N,\delta}$ when $\delta'<\delta$, this allows us to define a unique strong (and weak) solution to~\eqref{eq:SDE} by $\ux_N^t:=\lim_{\delta\rightarrow 0}\ux_{N,\delta}^t$. Noting that
\[Q_N=\sup_{t\in[0,T]}Q_N^t,\]
\eqref{align:energydoobs} also implies that $Q_N(\ux_N)<\infty$ almost surely after taking $\delta\rightarrow 0.$

To show~\eqref{eq:energy-control}, since $b=0$, we can take $\alpha\rightarrow 0$ and $\delta\rightarrow 0$ in~\eqref{align:energydoobs} to find that
\[\P\big(Q_N(\ux_N)\geq L \big)\leq e^{-\frac{N}{2\sigma}\big(L-H_N(\ux_0)\big)}.\]
It thus suffices to prove that~\eqref{cond:initial-convergence} implies that $\lim_{N\rightarrow\infty}H_N(\ux_N^0)=\mathcal{E}(\gamma).$

First, we note that
\begin{equation}\label{eq:energy_equality}
H_N(\ux_N^0)=F(\ux_N^0,\gamma)+2\int \g(x-y)\,\diff (\mu_N^0-\gamma)(x)\,\diff \gamma(y)+\Ec(\gamma).
\end{equation}
Applying Lemma~\ref{lem:renormalized_Holder}, we have the inequality
\[\bigg|\int \g(x-y)\,\diff (\mu_N^0-\gamma)(x)\,\diff \gamma(y)\bigg| \leq C\Big(F_N(\ux_N^0,\gamma)+C\|\gamma\|_{L^\infty(\T^d)}N^{-\beta}\Big)\|\gamma\|_{\H}+C\|\gamma\|_{L^\infty}N^{-\beta},\]
thus the first two terms on the right-hand side of~\eqref{eq:energy_equality} converge to 0.

\end{proof}

\section{Regularity of \texorpdfstring{$S$}{}, \texorpdfstring{$S_N$}{}, and \texorpdfstring{$I_\gamma$}{}}\label{sec:continuity_of_S}

In this section we show the convergence of $S_N(\ux_N,\phi)$ to $S(\mu,\phi)$ for all $\phi\in C^\infty([0,T]\times\T^d)$ when $\mu_N\rightarrow \mu$ in $\Cs^T$ and $Q_N(\ux_N)\leq L$ for some $L\geq 0$. We will use the commutator estimate, Proposition~\ref{prop:renormalized_commutator_estimate}, to control the difference between the last term of $S_N(\ux_N,\phi)$ and $S(\mu,\phi).$

As a warm-up, we first show that $S(\mu,\phi)$ is continuous on the sublevel sets of $Q$. Instead of Proposition~\ref{prop:renormalized_commutator_estimate} here we instead use the non-renormalized commutator estimate.

\begin{proposition}\label{prop:commutator-estimate} There exists $C>0$ so that for every $\rho,\nu\in\Mc(\T^d)$ with $\Ec(|\rho|),\Ec(|\nu|)<\infty$ and Lipschitz vector field  $\psi$
\begin{align*}
&\bigg|\int_{(\T^d)^2}\K_\psi(x,y)\,\diff \rho(x)\,\diff\nu(y)\bigg|
\\&\notag\quad\leq CA_\psi\Big(\|\rho\|_{\H(\T^d)}\|\nu\|_{\H(\T^d)}+\big|\rho(\T^d)\big|\|\nu\|_{\H(\T^d)}+\big|\nu(\T^d)\big|\|\rho\|_{\H(\T^d)}\Big),
\end{align*}
where $A_\psi:=\|\nabla {\psi}\|_{L^\infty(\T^d)}+\big\||\nabla|^{\frac{d-\s}{2}}{\psi}\big\|_{L^{\frac{2d}{d-2-\s}}(\T^d)}$.
\end{proposition}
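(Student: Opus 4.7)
The claim is a non-renormalized, periodic version of the commutator estimate of Serfaty-Duerinckx (see~\cite{serfaty_mean_2020,duerinckx_mean-field_2016}), differing only by allowing $\rho\neq \nu$ and by the periodic setting. My plan is to separate off the means, represent the zero-mean bilinear piece via the Riesz potentials, and close with a fractional Leibniz estimate. Throughout I may assume $\psi$ has zero mean (constants drop out of $\K_\psi$), so on $\T^d$ the Poincar\'e inequality $\|\psi\|_{L^p}\lesssim\|\nabla\psi\|_{L^\infty}$ holds for every $p\leq\infty$.

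\textbf{Step 1 (Mean separation).} Write $\rho=\bar\rho+\rho(\T^d)$ and $\nu=\bar\nu+\nu(\T^d)$ with $\bar\rho,\bar\nu$ zero-mean, and set $h_\rho:=\g*\bar\rho$, $h_\nu:=\g*\bar\nu$; these are zero-mean and satisfy $|\nabla|^{d-\s}h_\rho=\c_{d,\s}\bar\rho$, so that $\||\nabla|^{(d-\s)/2}h_\rho\|_{L^2}=\c_{d,\s}\|\bar\rho\|_\H$ by Plancherel (and similarly for $h_\nu$). Expanding $\rho\otimes\nu$ bilinearly yields four pieces. The pure-constant piece vanishes because $\int_{\T^d}\nabla\g(x-y)\,dx=0$ by translation invariance, which kills both halves of $\K_\psi$ after integrating over the Lebesgue variable. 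The mixed piece $\rho(\T^d)\iint \K_\psi\,dx\,d\bar\nu$ collapses by the same identity to $\rho(\T^d)\int\psi\cdot\nabla h_\nu\,dx$. H\"older's inequality plus the Sobolev embedding $\dot H^{(d-\s)/2}\hookrightarrow\dot W^{1,2d/(\s+2)}$, which is critical by the scaling $(d-\s)/2-d/2=1-(\s+2)/2$, together with $\|\psi\|_{L^{2d/(2d-\s-2)}}\lesssim\|\nabla\psi\|_{L^\infty}$, bounds this piece by $|\rho(\T^d)|\|\nabla\psi\|_{L^\infty}\|\bar\nu\|_\H$. The symmetric piece produces $|\nu(\T^d)|\|\nabla\psi\|_{L^\infty}\|\bar\rho\|_\H$, matching the last two summands in the claim.

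\textbf{Step 2 (Zero-mean bilinear piece).} Using that $\g$ is even and $(\nabla\g)*\bar\nu=\nabla h_\nu$, one directly computes
\[
\int_{(\T^d)^2}\K_\psi(x,y)\,d\bar\rho(x)\,d\bar\nu(y) = \int_{\T^d}\psi\cdot\nabla h_\nu\,d\bar\rho + \int_{\T^d}\psi\cdot\nabla h_\rho\,d\bar\nu.
\]
Substituting $d\bar\rho=\c_{d,\s}^{-1}|\nabla|^{d-\s}h_\rho\,dx$ and transferring half-powers via the self-adjointness of $|\nabla|^{(d-\s)/2}$ reduces the first integral to $\c_{d,\s}^{-1}\int|\nabla|^{(d-\s)/2}(\psi\cdot\nabla h_\nu)\,|\nabla|^{(d-\s)/2}h_\rho\,dx$; Cauchy-Schwarz and Plancherel leave the estimate
\[
\||\nabla|^{(d-\s)/2}(\psi\cdot\nabla h_\nu)\|_{L^2}\lesssim A_\psi\|\bar\nu\|_\H.
\]
This follows from a Kato-Ponce fractional Leibniz inequality, whose two contributions are controlled by $\|\nabla\psi\|_{L^\infty}\|h_\nu\|_{\dot H^{(d-\s)/2}}$ (after an integration by parts moving a derivative onto $\psi$) and by $\||\nabla|^{(d-\s)/2}\psi\|_{L^{2d/(d-2-\s)}}\|\nabla h_\nu\|_{L^{2d/(\s+2)}}$. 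The exponents in the second term are H\"older-conjugate, since $(d-2-\s)/(2d)+(\s+2)/(2d)=1/2$, and match the Sobolev embedding from Step 1, giving $\|\nabla h_\nu\|_{L^{2d/(\s+2)}}\lesssim \|\bar\nu\|_\H$. The second integral with $\rho\leftrightarrow\nu$ is handled identically, yielding the main $A_\psi\|\bar\rho\|_\H\|\bar\nu\|_\H$ summand.

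\textbf{Main obstacle.} The only genuinely technical step is the Kato-Ponce estimate at the precise exponent $2d/(d-2-\s)$ appearing in $A_\psi$, which is forced by the Sobolev scaling above. On the torus this is routine via a Littlewood-Paley paraproduct decomposition, and the periodic correction to $\g$ away from the Euclidean Riesz kernel is smooth by~\eqref{eq:periodic-correction}, contributing only lower-order terms that are readily absorbed into the right-hand side.
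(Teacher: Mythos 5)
Your Step 1 (mean separation) and the identity at the start of Step 2 are correct and match the paper's reduction of the problem: indeed the paper's formula
\[
\int\K_{\psi}(x,y)f(x)g(y)\,\diff x\,\diff y=\c_{d,\s}^{-1}\int\psi\cdot\Big(\nabla \g*f\,(-\Delta)^{\frac{d-\s}{2}}\g*g+\nabla \g*g\,(-\Delta)^{\frac{d-\s}{2}}\g*f\Big)
\]
collapses to your $\int\psi\cdot\nabla h_\nu\,\diff\bar\rho+\int\psi\cdot\nabla h_\rho\,\diff\bar\nu$.

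The gap is in Step 2, precisely where you apply Cauchy--Schwarz before the Leibniz rule. After splitting $|\nabla|^{d-\s}$ symmetrically and applying Cauchy--Schwarz, you are committed to bounding $\big\||\nabla|^{\frac{d-\s}{2}}(\psi\cdot\nabla h_\nu)\big\|_{L^2}$ by $A_\psi\|\bar\nu\|_{\H}$, and this estimate is \emph{false}. The product $\psi\cdot\nabla h_\nu$ carries one more derivative on $h_\nu$ than the budget allows: $\nabla h_\nu\in\dot H^{\frac{d-\s}{2}-1}$, so $\psi\cdot\nabla h_\nu\in\dot H^{\frac{d-\s}{2}-1}$ at best, and there is no mechanism to gain the missing derivative. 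Concretely, the Kato--Ponce/paraproduct piece with high frequency on $\nabla h_\nu$ is of size $\|\psi\|_{L^{p}}\big\||\nabla|^{\frac{d-\s}{2}}\nabla h_\nu\big\|_{L^{q}}$, which requires $h_\nu\in\dot H^{\frac{d-\s}{2}+1}$; there is no integral left to ``integrate by parts'' in---you are bounding an $L^2$ norm, not a pairing---so the parenthetical ``after an integration by parts moving a derivative onto $\psi$'' is not a valid step. A frequency heuristic makes the failure explicit: if $h_\nu$ oscillates at frequency $N$ and $\psi$ at frequency $O(1)$, then $\big\||\nabla|^{\frac{d-\s}{2}}(\psi\cdot\nabla h_\nu)\big\|_{L^2}\sim N^{\frac{d-\s}{2}+1}\|h_\nu\|_{L^2}$ while $A_\psi\|\bar\nu\|_{\H}\sim N^{\frac{d-\s}{2}}\|h_\nu\|_{L^2}$, a loss of a full factor of $N$.

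The commutator gain that the proposition encodes only appears in the \emph{symmetric} sum $\int\psi\cdot\nabla h_\nu\,\diff\bar\rho+\int\psi\cdot\nabla h_\rho\,\diff\bar\nu$: in the same frequency test (with $h_\nu,h_\rho$ at nearby frequencies $\sim N$) each term is individually of size $N^{d-\s+1}$, but the leading orders cancel, leaving $O(N^{d-\s})=O(\|\bar\rho\|_{\H}\|\bar\nu\|_{\H})$. Applying Cauchy--Schwarz in each term separately destroys this cancellation. The paper avoids this by never decoupling the two sides: it rewrites the bilinear form in the symmetric form $\int\psi\cdot(\nabla F(-\Delta)^{\alpha/2}G+\nabla G(-\Delta)^{\alpha/2}F)$ with $F=\g*f$, $G=\g*g$, and proves the bound by induction on $\lfloor\alpha\rfloor$ using the Caffarelli--Silvestre extension; in each step the integration by parts is performed on the integral itself, with the symmetry in $F\leftrightarrow G$ allowing the top-order second-derivative terms $\int\psi_i\partial_k\partial_i\Psi(F)\partial_k\Psi(G)|y|^\gamma$ and $\int\psi_i\partial_k\partial_i\Psi(G)\partial_k\Psi(F)|y|^\gamma$ to be recombined into a total derivative $\int\psi_i\partial_i(\partial_k\Psi(F)\partial_k\Psi(G))|y|^\gamma$ that is integrated by parts onto $\psi$. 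Your proposal has the right reduction but omits exactly this symmetrization; the fix is to apply the Leibniz/paraproduct machinery to the symmetric pairing directly, rather than after Cauchy--Schwarz.
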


This is the estimate~\cite[Proposition 3.1]{nguyen_mean-field_2022} adapted to the torus and is needed as a preliminary step to prove Proposition~\ref{prop:renormalized_commutator_estimate}. We give the proof in Subsection~\ref{subseq:commutator_estimate} of the Appendix.

The convergence of $S_N(\ux_N,\phi)$ to $S(\mu,\phi)$ is then essentially a renormalized version of the continuity of $S(\mu,\phi)$: $S_N(\ux_N,\phi)$ and $Q_N(\ux_N)$ are respectively equal to $S(\mu_N,\phi)$ and $Q(\mu_N)$ except the self-interactions of the Diracs are removed. The continuity of $S$ also allows us to show that the sublevel sets of $I_\gamma$ are compact with respect to the $C([0,T],\Pc(\T^d))$ topology.

\subsection{Continuity of \texorpdfstring{$S$}{}}

First, we show that $Q$ is lower semi-continuous.

\begin{lemma}\label{lem:Q-lsc}
$Q$ is a lower semi-continuous function on $\Cs^T.$
\end{lemma}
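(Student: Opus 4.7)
My plan is to reduce the lower semi-continuity of $Q$ on $\Cs^T$ to two pointwise-in-time statements, namely that $\Ec$ and $\Dc$ are both lower semi-continuous on $\Pc(\T^d)$ endowed with the topology of weak convergence, and then to combine these with Fatou's lemma in time and the superadditivity of $\liminf$.

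For $\Ec$, I would use that by \eqref{eq:periodic-correction} the potential $\g$ is smooth away from the origin and blows up to $+\infty$ at $0$, so $\g$ is lower semi-continuous on $\T^d$ and bounded below, say $\g + C \geq 0$ pointwise. If $\mu_j \to \mu$ in $\Pc(\T^d)$ weakly then $\mu_j^{\otimes 2} \to \mu^{\otimes 2}$ weakly on $\T^d \times \T^d$, and the classical weak lower semi-continuity of integration against a non-negative lsc integrand, applied to $\g + C$, yields $\Ec(\mu) + C \leq \liminf_j \bigl(\Ec(\mu_j) + C\bigr)$, i.e.\ $\Ec(\mu) \leq \liminf_j \Ec(\mu_j)$ (the $C$ contributions cancel since all measures have unit mass). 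For $\Dc$, I would instead use the Fourier representation $\Dc(\mu) = \c_{d,\s} \sum_{k \neq 0} (2\pi|k|)^{2+\s-d}|\hat\mu(k)|^2$: weak convergence gives $\hat\mu_j(k) \to \hat\mu(k)$ for each $k \in \Z^d$, so Fatou's lemma on the counting measure on $\Z^d \setminus \{0\}$ produces the desired inequality $\Dc(\mu) \leq \liminf_j \Dc(\mu_j)$.

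Now suppose $\mu_j \to \mu$ in $\Cs^T$, so that $\mu_j^t \to \mu^t$ weakly for every $t \in [0,T]$. Fixing $t \in [0,T]$, the pointwise lsc of $\Dc$ together with Fatou's lemma applied to the nonnegative integrand $\tau \mapsto \Dc(\mu_j^\tau)$ on $[0,t]$ gives $\int_0^t \Dc(\mu^\tau)\,\diff \tau \leq \liminf_j \int_0^t \Dc(\mu_j^\tau)\,\diff \tau$. Combined with the lsc of $\Ec$ at time $t$ and the superadditivity inequality $\liminf_j a_j + \liminf_j b_j \leq \liminf_j (a_j + b_j)$, this yields
\begin{equation*}
\Ec(\mu^t) + 2\sigma \int_0^t \Dc(\mu^\tau)\,\diff \tau \;\leq\; \liminf_j \left[\Ec(\mu_j^t) + 2\sigma \int_0^t \Dc(\mu_j^\tau)\,\diff \tau\right] \;\leq\; \liminf_j Q(\mu_j),
\end{equation*}
where the last inequality uses the obvious pointwise bound from the definition of $Q$. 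Since the right-hand side is independent of $t$, I can take $\sup_{t \in [0,T]}$ on the left-hand side to conclude $Q(\mu) \leq \liminf_j Q(\mu_j)$, which is exactly the required lower semi-continuity.

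There is no substantial obstacle in this argument; the only care required is to work at a fixed $t$ before taking the supremum (a direct swap of $\sup_t$ with $\liminf_j$ would be invalid), and to rely on the non-negativity of $\Dc$ when invoking Fatou in time.
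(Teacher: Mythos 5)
Your proof is correct and follows essentially the same skeleton as the paper's: establish pointwise-in-time lower semi-continuity of $\Ec$ and $\Dc$ on $\Pc(\T^d)$, combine with Fatou's lemma in time, and then pass to the supremum over $t$. The only difference is in how you handle $\Dc$: you use the Fourier representation $\Dc(\mu) = \c_{d,\s}\sum_{k\neq 0}(2\pi|k|)^{2+\s-d}|\hat\mu(k)|^2$ together with Fatou on $\Z^d$, whereas the paper reuses the Portmanteau argument by noting that $(-\Delta)\g$ is itself lower semi-continuous and bounded below (being a constant multiple of the periodic Riesz potential with parameter $\s+2<d$). Both are valid; the Fourier route is slightly more self-contained and does not require identifying $(-\Delta)\g$ as a Riesz potential, while the paper's route is more uniform across $\Ec$ and $\Dc$. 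Your explicit remark that one must work at fixed $t$ before taking the supremum, rather than swapping $\sup_t$ with $\liminf_j$ directly, is a point the paper leaves implicit and is worth making.
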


\begin{proof}
Since $\g$ and $(-\Delta\g)$ are lower semi-continuous and bounded from below, the Portmanteau theorem implies that if $\mu_k\rightarrow \mu$ in $\Pc(\T^d)$ then
\[\liminf_{k\rightarrow\infty} \Ec(\mu_k)\geq \Ec(\mu)\quad\text{and}\quad\liminf_{k\rightarrow\infty} \Dc(\mu_k)\geq \Dc(\mu).\]
When combined with Fatou's lemma, these inequalities imply that if $\mu_k\rightarrow\mu$ in $\Cs^T$, then for all $t\in[0,T]$
\[\liminf_{k\rightarrow\infty} \bigg(\Ec(\mu_k^t)+2\sigma\int_0^t\Dc(\mu_k^\tau)\,\diff \tau \bigg)\geq \Ec(\mu^t)+2\sigma\int_0^t\Dc(\mu^\tau)\,\diff \tau.\]
Taking supremums over time, this implies the lemma.
\end{proof}

\begin{proposition}\label{prop:S-continuity}
For all $\phi\in C^\infty([0,T]\times \T^d)$,  $S(\mu,\phi)$ is a continuous function on sublevel sets of $Q.$
\end{proposition}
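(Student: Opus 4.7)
The plan is to split $S(\mu,\phi)$ into its five constituent terms and establish continuity of each separately on sublevel sets of $Q$. Fix a sequence $\mu_k \to \mu$ in $\Cs^T$ with $\sup_k Q(\mu_k)\le L$; then $Q(\mu)\le L$ as well by Lemma~\ref{lem:Q-lsc}. The first four terms in $S(\mu_k,\phi)$, namely $\langle\mu_k^T,\phi^T\rangle$, $\langle\mu_k^0,\phi^0\rangle$, $\int_0^T \langle\mu_k^t,\partial_t\phi^t+\sigma\Delta\phi^t\rangle\,\diff t$, and $\sigma\int_0^T \langle\mu_k^t,|\nabla\phi^t|^2\rangle\,\diff t$, are linear pairings of $\mu_k$ with smooth functions of $(t,x)$ whose $L^\infty$ norms are uniformly bounded in $t$. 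Uniform convergence in $\Cs^T$ forces $\mu_k^t \to \mu^t$ weakly for every $t$, so bounded convergence in $t$ immediately gives convergence of each of these four terms to their $\mu$-counterparts.

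The remaining (singular) term in $S(\mu_k,\phi)-S(\mu,\phi)$ is
\begin{equation*}
\tfrac{1}{2}\int_0^T \int_{(\T^d)^2} \K_{\nabla\phi^t}(x,y)\,\diff\bigl[(\mu_k^t)^{\otimes 2}-(\mu^t)^{\otimes 2}\bigr](x,y)\,\diff t.
\end{equation*}
Using the identity $(\mu_k^t)^{\otimes 2}-(\mu^t)^{\otimes 2}=(\mu_k^t-\mu^t)\otimes\mu_k^t+\mu^t\otimes(\mu_k^t-\mu^t)$ and applying Proposition~\ref{prop:commutator-estimate} to each piece with $\rho=\mu_k^t-\mu^t$ (a signed mean-zero measure, so the $|\rho(\T^d)|$ factor drops) and $\nu\in\{\mu_k^t,\mu^t\}$ (probability measures, so $|\nu(\T^d)|=1$) bounds this term in absolute value by
\begin{equation*}
C\Bigl(\sup_{t\in[0,T]}A_{\nabla\phi^t}\Bigr)\int_0^T \|\mu_k^t-\mu^t\|_{\H}\bigl(\|\mu_k^t\|_{\H}+\|\mu^t\|_{\H}+1\bigr)\,\diff t.
\end{equation*}
The prefactor is finite since $\phi$ is smooth, and $\|\mu_k^t\|_{\H},\|\mu^t\|_{\H}\le \sqrt{L/\c_{d,\s}}$ uniformly in $t$ by the energy component of $Q\le L$. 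The task reduces to showing $\int_0^T \|\mu_k^t-\mu^t\|_{\H}\,\diff t \to 0$.

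For this I would interpolate between a weak negative-order Sobolev norm, where convergence is immediate, and the strictly stronger norm controlled by the enstrophy. For any $A>d/2$, uniform convergence $\mu_k\to\mu$ in $\Cs^T$ yields $\sup_t\|\mu_k^t-\mu^t\|_{\dot H^{-A}}\to 0$: the low-frequency Fourier modes converge uniformly in $t$ via $|\hat\mu_k^t(\xi)-\hat\mu^t(\xi)|\le 2\pi|\xi|\,d(\mu_k^t,\mu^t)$, while the high-frequency tail is controlled uniformly in $k$ using $|\hat\mu_k^t(\xi)|\le 1$ and $\sum_{\xi\neq 0} (2\pi|\xi|)^{-2A}<\infty$. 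Meanwhile the enstrophy component of $Q\le L$ yields $\int_0^T \|\mu_k^t-\mu^t\|_{\dot H^{1+\frac{\s-d}{2}}}^{2}\,\diff t\le C$ uniformly in $k$. Choosing $A$ large and $\theta\in(0,1)$ so that $(1-\theta)(-A)+\theta\bigl(1+\tfrac{\s-d}{2}\bigr)=\tfrac{\s-d}{2}$, the interpolation $\|f\|_{\H}\le \|f\|_{\dot H^{-A}}^{1-\theta}\|f\|_{\dot H^{1+\frac{\s-d}{2}}}^{\theta}$ combined with H\"older in $t$ (with exponents $\tfrac{2}{\theta}$ and $\tfrac{2}{2-\theta}$) gives
\begin{equation*}
\int_0^T \|\mu_k^t-\mu^t\|_{\H}\,\diff t \le T^{1-\theta/2}\Bigl(\sup_t\|\mu_k^t-\mu^t\|_{\dot H^{-A}}\Bigr)^{1-\theta}\Bigl(\int_0^T \|\mu_k^t-\mu^t\|_{\dot H^{1+\frac{\s-d}{2}}}^{2}\,\diff t\Bigr)^{\theta/2},
\end{equation*}
whose right-hand side vanishes as $k\to\infty$. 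The main obstacle is precisely this last convergence: weak convergence of the trajectories alone does not suffice to control the singular term, so one must exploit the strict gain of one derivative in the enstrophy over the energy (which is exactly what the sub-Coulomb assumption $\s<d-2$ provides) in order to upgrade weak convergence to convergence in the Riesz energy norm on average in time.
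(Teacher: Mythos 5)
Your proof is correct and follows the same overall structure as the paper's: the regular terms of $S(\mu,\phi)$ are handled directly, the singular term is controlled by Proposition~\ref{prop:commutator-estimate}, and the problem is reduced to showing $\int_0^T\|\mu_k^t-\mu^t\|_{\H}\,\diff t\to 0$. Where you diverge is precisely in this last reduction. The paper interpolates in physical space: it truncates the potential as $\g_\delta(x)=(1-\chi(x/\delta))\g(x)$, bounds the smooth $\g_\delta$-pairing by $C_\delta\,d(\mu_k^t,\mu^t)^2$, dominates the singular remainder $\g-\g_\delta$ by a small multiple of $(-\Delta)\g$ plus a constant, and then sends $k\to\infty$ followed by $\delta\to 0$. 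You instead interpolate in Fourier space between $\dot H^{-A}$ (with $A>d/2$) and $\dot H^{1+\frac{\s-d}{2}}$, using the Wasserstein metric to control $\sup_t\|\mu_k^t-\mu^t\|_{\dot H^{-A}}$ frequency by frequency and Hölder in time to exploit the $L^2$-in-time enstrophy bound. Both arguments are valid; the paper's truncation route is chosen presumably because the same estimate~\eqref{eq:interpolation_bound} resurfaces in renormalized form (via $\g_\delta$ and $D_N$) in the proof of Proposition~\ref{prop:S_N-continuity}, whereas your Fourier route is a bit more self-contained and closer to textbook Sobolev interpolation. Your application of the commutator estimate is also cosmetically different (the decomposition $(\mu_k-\mu)\otimes\mu_k+\mu\otimes(\mu_k-\mu)$ versus the paper's $\rho=\mu_k-\mu$, $\nu=\mu_k+\mu$), but this is the same bound with different bookkeeping.

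One small inaccuracy in your closing remark: the one-derivative gain of the enstrophy norm $\dot H^{1+\frac{\s-d}{2}}$ over the energy norm $\dot H^{\frac{\s-d}{2}}$ is purely definitional and holds for all $\s$; the sub-Coulomb condition $\s<d-2$ plays no role in this proposition. It is used elsewhere (to bound $(-\Delta)\g$ from below so that $Q_N$ controls the discrete enstrophy, and to place $(-\Delta)\g$ in the admissible Riesz range for the renormalized estimates), not here.
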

\begin{proof}
We fix $\phi$ throughout. The function
\[\mu\mapsto \langle\mu^T,\phi^T\rangle-\langle\mu^0,\phi^0 \rangle  -\int_0^T \langle\mu^t,\partial_t\phi^t+\sigma\Delta\phi^t \rangle\,\diff t-\sigma\int_0^T\langle\mu^t, |\nabla\phi^t|^2\rangle\,\diff t\]
is immediately continuous with respect to the topology on $\Cs^T$. It thus suffices to show that if $\mu_k$ is a sequence of measure trajectories converging to $\mu$ and there exists $L>0$ so that
\begin{equation}\label{eq:uniform-Q-bound}
\sup_{k\geq 1} Q(\mu_k)\leq L,    
\end{equation}
then
\begin{equation}\label{eq:slope-convergence}
\lim_{k\rightarrow\infty}\int_0^T\int_{(\T^d)^2}\K_{\nabla\phi^t}(x,y)\,\diff (\mu_k^t)^{\otimes 2}(x,y)\,\diff t=\int_0^T\int_{(\T^d)^2}\K_{\nabla\phi^t}(x,y)\,\diff (\mu^t)^{\otimes 2}(x,y)\,\diff t.
\end{equation}
We emphasize that Lemma~\ref{lem:Q-lsc} with~\eqref{eq:uniform-Q-bound} imply that $Q(\mu)\leq L,$ thus the right-hand side of~\eqref{eq:slope-convergence} is well-defined.

Applying Proposition~\ref{prop:commutator-estimate} with $\rho=\mu_k-\mu$ and $\nu=\mu_k+\mu$, for all $t\in[0,T]$
\begin{align*}
\bigg|\int\K_{\nabla\phi^t}(x,y)\,\diff \big((\mu_k^t)^{\otimes 2}-(\mu^t)^{\otimes 2}\big)(x,y)\bigg|&\leq CA_{\nabla \phi^t}\|\mu_k^t-\mu^t\|_{\H}\Big(1+\|\mu_k^t\|_{\H}^2+\|\mu^t\|_{\H}^2\Big)^{\frac{1}{2}}
\\&\leq CA_{\nabla \phi^t}\|\mu_k^t-\mu^t\|_{\H}\Big(1+Q(\mu_k)+Q(\mu)\Big)^{\frac{1}{2}}
\\&\leq CA_{\nabla \phi^t}\|\mu_k^t-\mu^t\|_{\H}(1+2L)^{1/2}.
\end{align*}
Integrating this bound we thus have that there exists $C_{\phi,L}>0$ so that
\begin{equation*}
\bigg|\int_0^T\int\K_{\nabla\phi^t}(x,y)\,\diff \big((\mu_k^t)^{\otimes 2}-(\mu^t)^{\otimes 2}\big)(x,y)\,\diff t\bigg|\leq C_{\phi,L}\int_0^T \|\mu_k^t-\mu^t\|_{\H}\,\diff t.
\end{equation*}
To conclude it thus suffices to show that
\begin{equation}\label{eq:L1-time-convergence}
  \lim_{k\rightarrow\infty}\int_0^T \|\mu_k^t-\mu^t\|_{\H}^2\,\diff t=0,
\end{equation}
which holds by interpolating between the convergence of $\mu_k$ to $\mu$ in $\C^T$ and the bounds on the enstrophy terms in $Q(\mu_k).$

First, we show that we can control the $\H$ norm between any two probability measures by interpolating between the Wasserstein-1 metric and the $\dot{H}^{1+\frac{\s-d}{2}}$ norm. Letting $\g_\delta$ be the truncated Riesz potential defined in Proposition~\ref{prop:S-continuity}, for any probability measures $\rho$ and $\nu$
\begin{equation}\label{eq:interpolation_split}\|\rho-\nu\|_{\H}^2=\c_{d,\s}^{-1}\int \g_\delta(x-y)\,\diff (\rho-\nu)^{\otimes 2}(x,y)+\c_{d,\s}^{-1}\int (\g-\g_\delta)(x-y)\,\diff (\rho-\nu)^{\otimes 2}(x,y).
\end{equation}
Since $\g_\delta$ is smooth, there exists $C_\delta>0$ so that
\begin{equation}\label{eq:interpolation_weak_bound}
\int \g_\delta(x-y)\,\diff (\rho-\nu)^{\otimes 2}(x,y)\leq C_\delta d(\rho,\nu)^2.
\end{equation}
Since
\[|x|^{-\s}\indc_{\s>0}-\log|x|\indc_{\s=0}\leq \delta|x|^{-\s-2}\quad\text{when }|x|\leq \delta,\]
\eqref{eq:periodic-correction} implies that for all sufficiently small $\delta>0$
\[|\g-\g_\delta|(x)\leq \g(x)\indc_{|x|\leq \delta}\leq \delta C\big((-\Delta)\g(x)+C\big).\]
Accordingly, we have the inequalities
\begin{equation}\label{eq:Delta_g_CS}
\int (\g-\g_\delta)\,\diff (\rho-\nu)^{\otimes 2}\leq C \delta \Big(\int(-\Delta)\g\,\diff (\rho+\nu)^{\otimes 2}+C\Big)\leq C\delta\Big(\|\rho\|_{\dot H^{1+\frac{\s-d}{2}}}^2+\|\nu\|_{\dot H^{1+\frac{\s-d}{2}}}^2+1\Big),    
\end{equation}
where in the last inequality we used the Cauchy--Schwarz and Young's inequalities. Combining~\eqref{eq:interpolation_split},~\eqref{eq:interpolation_weak_bound}, and~\eqref{eq:Delta_g_CS}, we have that for all $\delta>0$ sufficiently small there exists $C_\delta>0$ so that
\begin{equation}\label{eq:interpolation_bound}\|\rho-\nu\|_{\H}^2\leq C_\delta d(\rho,\nu)^2+C\delta\Big(\|\rho\|_{\dot H^{1+\frac{\s-d}{2}}}^2+\|\nu\|_{\dot H^{1+\frac{\s-d}{2}}}^2+1\Big).
\end{equation}

We may now apply~\eqref{eq:interpolation_bound} to find that
\begin{align*}
    \int_0^T \|\mu_k^t-\mu^t\|_{\H}^2\,\diff t&\leq C_\delta\int_0^Td(\mu_k^t,\mu^t)^2\,\diff t+C\delta\int_0^T\|\mu_k^t\|_{\Hp}^2+\|\mu^t\|_{\Hp}^2\,\diff t +C\delta T
    \\&\leq C_\delta\int_0^Td(\mu_k^t,\mu^t)^2\,\diff t+C\delta (T+\sigma^{-1}L).
\end{align*}
Taking $k\rightarrow\infty$ and then $\delta\rightarrow 0$ we conclude.
\end{proof}

\subsection{Convergence of \texorpdfstring{$S_N$}{}}

Our goal now is to prove renormalized versions of Lemma~\ref{lem:Q-lsc} and Proposition~\ref{prop:S-continuity}, where $Q(\mu)$, $S(\mu,\phi),$ and $\|\mu_k^t-\mu^t\|_{\H}$ are respectively replaced by $Q_N(\ux_N^t)$, $S(\ux_N,\phi),$ and $F_N(\ux_N^t,\mu^t)$.

We begin with adapting the proof of Lemma~\ref{lem:Q-lsc} to show that $Q_N$ satisfy a $\Gamma$-limit lower bound with respect to $Q$.
\begin{lemma}\label{lem:gamma_limit}
Let $\ux_N\in C([0,T],(\T^d)^N)$ so that $\mu_N\rightarrow\mu$ in $\Cs^T$ as $N\rightarrow\infty$. Then
\[\liminf_{N\rightarrow\infty} Q_N(\ux_N)\geq Q(\mu).\]
\end{lemma}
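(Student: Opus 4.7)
The plan is to adapt the lower semi-continuity proof of $Q$ in Lemma~\ref{lem:Q-lsc} to the renormalized functionals $Q_N$. The main point to address is that, unlike $\Ec(\mu_N^t)$ (which is $+\infty$ since $\mu_N^t$ is a sum of Diracs), $H_N(\ux_N^t)$ is the same integral restricted to the off-diagonal subset of $(\T^d)^2$. The essential ingredients are that $\g$ is smooth away from $0$ and blows up to $+\infty$ at the origin, hence is lower semi-continuous and bounded below on $\T^d$, and that by~\eqref{eq:periodic-correction} together with the direct calculation $-\Delta|x|^{-\s} = \s(d-2-\s)|x|^{-\s-2}$ (which is positive for $\s \in (0, d-2)$, with the analogous statement at $\s = 0$), the function $(-\Delta)\g$ has the same properties. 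Fix a constant $C$ with $\g + C \geq 0$ and $(-\Delta)\g + C \geq 0$ on $\T^d$.

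First I would argue pointwise in $t$. For any $M > 0$ set $\g_M := \g \wedge M$, which is bounded above by $M$, bounded below by $-C$, and still lower semi-continuous. Since $(\mu_N^t)^{\otimes 2}(\Delta) = 1/N$ when the particles are pairwise distinct,
\[ H_N(\ux_N^t) = \int_{(\T^d)^2 \setminus \Delta} \g\,\diff (\mu_N^t)^{\otimes 2} \geq \int_{(\T^d)^2} \g_M\,\diff (\mu_N^t)^{\otimes 2} - \frac{M}{N}. \]
Because $\mu_N \to \mu$ in $\Cs^T$ implies $(\mu_N^t)^{\otimes 2} \to (\mu^t)^{\otimes 2}$ weakly on $(\T^d)^2$, the Portmanteau theorem applied to the l.s.c., bounded-below function $\g_M$ yields
\[ \liminf_{N\to\infty} \int_{(\T^d)^2} \g_M\,\diff (\mu_N^t)^{\otimes 2} \geq \int_{(\T^d)^2} \g_M\,\diff (\mu^t)^{\otimes 2}. \]
Letting $M \to \infty$ and invoking monotone convergence on $\g_M + C \uparrow \g + C$ gives $\liminf_{N\to\infty} H_N(\ux_N^t) \geq \Ec(\mu^t)$. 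The identical argument with $(-\Delta)\g$ in place of $\g$ produces $\liminf_{N\to\infty} D_N(\ux_N^\tau) \geq \Dc(\mu^\tau)$ for every $\tau \in [0,T]$.

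To recover the integrated enstrophy, Fatou's lemma (applicable because $D_N + C \geq 0$ uniformly in $N$ and $\tau$) gives
\[ \liminf_{N\to\infty} \int_0^t D_N(\ux_N^\tau)\,\diff \tau \geq \int_0^t \Dc(\mu^\tau)\,\diff \tau. \]
Combining, for each fixed $t \in [0,T]$,
\[ \liminf_{N\to\infty} Q_N(\ux_N) \geq \liminf_{N\to\infty} \Big[H_N(\ux_N^t) + 2\sigma \int_0^t D_N(\ux_N^\tau)\,\diff \tau\Big] \geq \Ec(\mu^t) + 2\sigma \int_0^t \Dc(\mu^\tau)\,\diff \tau, \]
and taking the supremum over $t \in [0,T]$ on the right-hand side finishes the proof. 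The only genuine obstacle is the renormalization arising from the diagonal in $H_N$ and $D_N$, and this is cleanly dissolved by the cap-and-monotone-convergence scheme above; the remaining steps mirror Lemma~\ref{lem:Q-lsc} verbatim.
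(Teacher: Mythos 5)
Your proof is correct and follows essentially the same route as the paper: truncate $\g$ (resp.\ $(-\Delta)\g$) at level $M$, drop an $O(M/N)$ diagonal contribution, pass to the limit along $(\mu_N^t)^{\otimes 2}\rightharpoonup(\mu^t)^{\otimes 2}$, let $M\to\infty$ by monotone convergence, then apply Fatou in time and take a supremum over $t$. The only cosmetic difference is that you invoke the Portmanteau theorem for the lower semi-continuous bounded function $\g\wedge M$, whereas the paper simply notes that $\g\wedge M$ is bounded and continuous (which it is, since $\g$ is continuous as an extended-real-valued function); both are valid.
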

\begin{proof}
For all $M>0$
\[H_N(\ux_N^t)=\int_{(\T^d)^2\setminus\Delta}\g(x-y)\,\diff (\mu_N^t)^{\otimes 2}(x,y)\geq\int_{(\T^d)^2}\g(x-y)\wedge M\,\diff (\mu_N^t)^{\otimes 2}(x,y)-\frac{M}{N}.\]
Since $\g\wedge M$ is a bounded continuous function and $(\mu_N^t)^{\otimes 2}$ weakly converge to $(\mu^t)^{\otimes 2}$ we have
\[\liminf_{N\rightarrow\infty}H_N(\ux_N^t)\geq \int_{(\T^d)^2} \g(x-y)\wedge M\,\diff (\mu^t)^{\otimes 2}(x,y).\]
Taking $M\rightarrow\infty$, the monotone convergence theorem implies that
\[\liminf_{N\rightarrow\infty}H_N(\ux_N^t)\geq \int_{(\T^d)^2} \g(x-y)\,\diff (\mu^t)^{\otimes 2}(x,y).\]
An identical argument implies that 
\[\liminf_{N\rightarrow\infty}D_N(\ux_N^t)\geq \int_{(\T^d)^2} (-\Delta)\g(x-y)\,\diff(\mu^t)^{\otimes 2}(x,y).\]
When combined with Fatou's lemma we thus find that for all $t\in [0,T]$
\[\liminf_{N\rightarrow\infty}\bigg(H_N(\ux_N^t)+2\sigma\int_0^tD_N(\ux_N^\tau)\,\diff \tau\bigg)\geq \Ec(\mu^t)+2\sigma\int_0^t\Dc(\mu^\tau)\,\diff \tau.\]
This concludes the lemma after taking supremums over time.
\end{proof}

We now prove the main proposition of this section. Besides replacing the commutator estimate, Proposition~\ref{prop:commutator-estimate}, with the renormalized commutator estimate, Proposition~\ref{prop:renormalized_commutator_estimate}, some additional technical issues arise in adapting the proof of Proposition~\ref{prop:S-continuity}. First, since the modulated energy $F_N(\ux_N,\mu)$ is only well-defined when $\mu$ is sufficiently regular (we always take $\mu\in L^\infty),$ when a measure trajectory is not in $L^\infty([0,T],L^\infty(\T^d))$ we must appropriately mollify it in space and take advantage of the fact that $\|\mu\|_{L^\infty}$ is paired with a negative power of $N$ in~\eqref{eq:renormalized_commutator_estimate}. Second, in Proposition~\ref{prop:S-continuity} when interpolating between the Wasserstein-1 metric and the $\dot{H}^{1+\frac{\s-d}{2}}$ norm we used that
\[\int_{(\T^d)^2} (-\Delta)\g(x-y)\,\diff \rho(x)\,\diff\nu(y)\leq C\|\rho\|_{\H}\|\nu\|_{\H}.\]
In the equivalent place, we instead use Lemma~\ref{lem:renormalized_Holder}. As we only prove Lemma~\ref{lem:renormalized_Holder} for sub-Coulomb Riesz potentials but $(-\Delta)\g$ corresponds to a Coulomb or super-Coulomb Riesz potential when $\s\geq d-4$, instead of bounding $|\g-\g_{\delta}|$ by $(-\Delta)\g$ we bound it by $(-\Delta)^{\frac{\alpha}{2}}\g$ for some $\alpha>0$ sufficiently small that $\s+\alpha<d-2.$

\begin{proposition}\label{prop:S_N-continuity}
    Let $\ux_N\in C([0,T],(\T^d)^N)$ so that $\mu_N\rightarrow \mu$ in $\Cs^T$ as $N\rightarrow\infty$ and there exists $L>0$ so that
    \[\sup_{N\geq 1}Q_N(\ux_N)\leq L. \]
    Then $Q(\mu)\leq L$ and $\lim_{N\rightarrow\infty} S_N(\ux_N,\phi)=S(\mu,\phi)$ for all $\phi\in C^\infty([0,T]\times \T^d)$.
\end{proposition}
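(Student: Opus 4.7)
The claim $Q(\mu)\leq L$ is immediate from the $\Gamma$-liminf inequality of Lemma~\ref{lem:gamma_limit}. In the difference $S_N(\ux_N,\phi)-S(\mu,\phi)$ every term except the quadratic double integral is either a boundary value $\langle\mu_N^t-\mu^t,\phi^t\rangle$ at $t=0,T$ or a space-time pairing of $\mu_N^t-\mu^t$ against a smooth function, and these vanish under the assumed $\Cs^T$-convergence of $\mu_N$ to $\mu$ and smoothness of $\phi$. The substantive task is therefore to prove
\begin{equation*}
\lim_{N\to\infty}\int_0^T\!\!\int_{(\T^d)^2\setminus\Delta}\!\!\K_{\nabla\phi^t}(x,y)\,\diff(\mu_N^t)^{\otimes 2}(x,y)\,\diff t=\int_0^T\!\!\int_{(\T^d)^2}\!\!\K_{\nabla\phi^t}(x,y)\,\diff(\mu^t)^{\otimes 2}(x,y)\,\diff t.
\end{equation*}

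Since the renormalized commutator estimate~\eqref{eq:renormalized_commutator_estimate} requires the target measure to lie in $L^\infty$, whereas here only $Q(\mu)<\infty$ is known, the plan is to interpose a space-mollified intermediate. Let $\Phi^\eps$ denote the heat kernel on $\T^d$ and set $\mu_\eps^t:=\mu^t\ast\Phi^\eps$, so that $\mu_\eps^t\in C^\infty(\T^d)$ with $\|\mu_\eps^t\|_{L^\infty}\leq C_\eps$. I would decompose the pointwise-in-$t$ difference as $(\mathrm{I})^t_{N,\eps}+(\mathrm{II})^t_\eps$, where
\begin{equation*}
(\mathrm{I})^t_{N,\eps}:=\int_{(\T^d)^2\setminus\Delta}\!\K_{\nabla\phi^t}\,\diff(\mu_N^t)^{\otimes 2}-\int\K_{\nabla\phi^t}\,\diff(\mu_\eps^t)^{\otimes 2},\quad (\mathrm{II})^t_\eps:=\int\K_{\nabla\phi^t}\,\diff\bigl((\mu_\eps^t)^{\otimes 2}-(\mu^t)^{\otimes 2}\bigr),
\end{equation*}
and send $N\to\infty$ first, $\eps\to 0$ after. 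For $(\mathrm{II})^t_\eps$, Proposition~\ref{prop:commutator-estimate} applied with $\rho=\mu_\eps^t-\mu^t$, $\nu=\mu_\eps^t+\mu^t$ yields $|(\mathrm{II})^t_\eps|\leq CA_{\nabla\phi^t}(1+L)^{1/2}\|\mu_\eps^t-\mu^t\|_{\H}$, and the Wasserstein-$\Hp$ interpolation already appearing inside Proposition~\ref{prop:S-continuity} gives $\int_0^T\|\mu_\eps^t-\mu^t\|_{\H}^2\,\diff t\to 0$ as $\eps\to 0$. For $(\mathrm{I})^t_{N,\eps}$, inequality~\eqref{eq:renormalized_commutator_estimate} applied with $\mu\leftarrow\mu_\eps^t$ gives
\begin{equation*}
|(\mathrm{I})^t_{N,\eps}|\leq CA_{\nabla\phi^t}\bigl(F_N(\ux_N^t,\mu_\eps^t)+CC_\eps N^{-\beta}\bigr)^{1/2}\bigl(L+\|\mu_\eps^t\|_{\H}^2+CC_\eps N^{-\beta}+1\bigr)^{1/2},
\end{equation*}
and the bound $\|\mu_\eps^t\|_{\H}^2\leq c\Ec(\mu_\eps^t)\leq cL$ controls the second factor by a constant depending only on $L$, up to terms vanishing as $N\to\infty$ for fixed $\eps$.

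After Cauchy--Schwarz in $t$, the whole problem thus collapses to showing, for each fixed $\eps>0$,
\begin{equation*}
\lim_{N\to\infty}\int_0^T F_N(\ux_N^t,\mu_\eps^t)\,\diff t=0,
\end{equation*}
which is the main obstacle. My plan here mirrors the interpolation used in Proposition~\ref{prop:S-continuity}: with $\g_\delta$ the smooth truncation of $\g$ introduced in the proof of Proposition~\ref{prop:energy-estimate}, split
\begin{equation*}
F_N(\ux_N^t,\mu_\eps^t)=\int_{(\T^d)^2\setminus\Delta}\g_\delta(x-y)\,\diff(\mu_N^t-\mu_\eps^t)^{\otimes 2}(x,y)+\int_{(\T^d)^2\setminus\Delta}(\g-\g_\delta)(x-y)\,\diff(\mu_N^t-\mu_\eps^t)^{\otimes 2}(x,y).
\end{equation*}
Because $\g_\delta$ is smooth, the first piece is bounded pointwise by $C_{\eps,\delta}d(\mu_N^t,\mu_\eps^t)+\g_\delta(0)/N$ (the $C^1$ norm of $\g_\delta$ controls the expansion of the product measure, and $\g_\delta(0)/N$ is the diagonal correction from removing $\Delta$ in $(\mu_N^t)^{\otimes 2}$), and this vanishes as $N\to\infty$ by uniform $\Cs^T$-convergence. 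For the singular remainder, one uses a bound of the form $|\g-\g_\delta|(x)\leq C\delta^\alpha\bigl((-\Delta)^{\alpha/2}\g(x)+C\bigr)$ for a small $\alpha>0$ chosen so that $\s+\alpha<d-2$, reducing the estimate to an integral against a sub-Coulomb Riesz kernel of parameter $\s+\alpha$ that can be handled by Lemma~\ref{lem:renormalized_Holder} together with the a~priori bounds $\sup_tH_N(\ux_N^t)\leq L$, $\int_0^T D_N(\ux_N^t)\,\diff t\leq L/(2\sigma)$, and the analogous bounds for $\mu_\eps^t$ from $Q(\mu)\leq L$; the result is a bound independent of $N$ that is $o_\delta(1)$. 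Sending $N\to\infty$, then $\delta\to 0$, then $\eps\to 0$ in this order closes the argument. The hardest step is precisely this singular remainder: the naive estimate $|\g-\g_\delta|\lesssim\delta(-\Delta)\g$ used in Proposition~\ref{prop:S-continuity} is no longer admissible in Lemma~\ref{lem:renormalized_Holder} once $\s\geq d-4$, since $(-\Delta)\g$ is then Coulomb or super-Coulomb, forcing the introduction of the fractional Laplacian with small exponent $\alpha$ and the attendant interpolation bookkeeping.
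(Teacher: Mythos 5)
Your high-level decomposition (mollify $\mu^t$ in space, bound one piece with the renormalized commutator estimate and the other with the smooth commutator estimate, then interpolate between Wasserstein and enstrophy to control the modulated energy) is the right strategy, and the $(-\Delta)^{\alpha/2}\g$ device with $\s+\alpha<d-2$ to avoid the Coulomb threshold is exactly what the paper does. However, your choice of a \emph{fixed} mollification scale $\eps$ with the limit order ``$N\to\infty$, then $\delta\to 0$, then $\eps\to 0$'' breaks the argument at the crucial step.

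The issue is the claim that $\lim_{N\to\infty}\int_0^T F_N(\ux_N^t,\mu_\eps^t)\,\diff t=0$ for each fixed $\eps>0$. This is false: $\mu_N^t\to\mu^t$, not $\mu_\eps^t$, so $d(\mu_N^t,\mu_\eps^t)\to d(\mu^t,\mu_\eps^t)>0$ for $\eps>0$. In your interpolation split the smooth $\g_\delta$ piece then converges, as $N\to\infty$, to $\int\g_\delta\,\diff(\mu^t-\mu_\eps^t)^{\otimes 2}\approx C_\delta d(\mu^t,\mu_\eps^t)^2\lesssim C_\delta\,\eps^2$, which does not vanish in $N$; and once you subsequently send $\delta\to 0$ the prefactor $C_\delta$ diverges, so the residual $C_\delta\eps^2$ does not tend to zero in the order $\delta\to 0$ then $\eps\to 0$. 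In short, $\eps$ must shrink together with the $\g_\delta$-error, which in your ordering has already been ``frozen.''

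The paper avoids this by coupling the mollification scale to $N$: it sets $\mu_{\eta_N}^t:=\mu^t*\eta_N$ with $\eta_N$ scaling so that $\|\mu_{\eta_N}^t\|_{L^\infty}\leq N^{\beta/2}$ (so the $\|\mu\|_{L^\infty}N^{-\beta}$ error in~\eqref{eq:renormalized_commutator_estimate} still vanishes) while the mollification scale $\to 0$. Then $\mu_{\eta_N}^t\to\mu^t$ and $\mu_N^t\to\mu^t$ simultaneously, so $\sup_t d(\mu_N^t,\mu_{\eta_N}^t)\to 0$, which is exactly the convergence your fixed-$\eps$ scheme cannot deliver. With that substitution the interpolation bound~\eqref{eq:renormalized_interpolation} gives $\int_0^T F_N(\ux_N^t,\mu_{\eta_N}^t)\,\diff t\leq C_\delta\int_0^T d(\mu_N^t,\mu_{\eta_N}^t)^2\diff t + C\delta^{\alpha/2}(\sigma^{-1}L+T)$, and the limits $N\to\infty$ then $\delta\to 0$ are now consistent. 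To repair your write-up you would replace the fixed $\eps$ by this $N$-dependent scale, at which point the rest of your argument, including the choice $0<\alpha<(d-2-\s)\wedge 2$ and the appeal to Lemma~\ref{lem:renormalized_Holder}, goes through verbatim.
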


\begin{proof}
Lemma~\ref{lem:gamma_limit} immediately implies that $Q(\mu)\leq L$ (and hence $S(\mu,\phi)$ is well-defined).

As in Proposition~\ref{prop:S-continuity}, it suffices to show that for any $\phi\in C^\infty([0,T]\times\T^d)$
\begin{equation*}
\lim_{k\rightarrow\infty}\int_0^T\int_{(\T^d)^2\setminus\Delta}\K_{\nabla\phi^t}(x,y)\,\diff (\mu_N^t)^{\otimes 2}(x,y)\,\diff t=\int_0^T\int_{(\T^d)^2}\K_{\nabla\phi^t}(x,y)\,\diff (\mu^t)^{\otimes 2}(x,y)\,\diff t.
\end{equation*}
For some mollifier $\eta$, throughout we let $\eta_N(x):=N^{\frac{\beta}{2}}\eta(N^{-\frac{\beta}{2d}}x)$ where $\beta>0$ is as in Proposition~\ref{prop:renormalized_commutator_estimate}. We then let $\mu_{\eta_N}^t:=\mu^t*\eta_N$. Using the triangle inequality we find that
\begin{align}
\notag&\bigg|\int_{(\T^d)^2\setminus\Delta}\K_{\nabla\phi^t}(x,y)\,\diff \big((\mu_N^t)^{\otimes 2}-(\mu^t)^{\otimes 2}\big)(x,y)\bigg|
\\&\quad\leq \Big|\int_{(\T^d)^2\setminus\Delta}\K_{\nabla\phi^t}(x,y)\,\diff \big((\mu_N^t)^{\otimes 2}-(\mu_{\eta_N}^t)^{\otimes 2}\big)(x,y)\bigg|\label{eq:slope-split-1}
\\&\qquad+\bigg|\int_{(\T^d)^2}\K_{\nabla\phi^t}(x,y)\,\diff \big((\mu_{\eta_N}^t)^{\otimes 2}-(\mu^t)^{\otimes 2}\big)(x,y)\bigg|.\label{eq:slope-split-2}
\end{align}

Applying inequality~\eqref{eq:renormalized_commutator_estimate} in Proposition~\ref{prop:renormalized_commutator_estimate},~\eqref{eq:slope-split-1} is bounded by
\[CA_{\nabla\phi^t}\Big( F_N(\ux_N^t,\mu_{\eta_N}^t)+C\|\mu_{\eta_N}^t\|_{L^\infty}N^{-\beta}\Big)^{\frac{1}{2}}\Big(H_N(\ux_N^t)+\|\mu_{\eta_N}^t\|_{\H}^2+1+C\|\mu_{\eta_N}^t\|_{L^\infty}N^{-\beta}\Big)^{\frac{1}{2}}.\]
Since $\|\mu_{\eta_N}\|_{\H}\leq C\|\mu\|_{\H}$, $\|\mu_{\eta_N}\|_{L^\infty}\leq \|\eta_N\|_{L^\infty}\leq N^{\frac{\beta}{2}}$, and $Q_N(\ux_N),Q(\mu)\leq L$, we thus have that~\eqref{eq:slope-split-1} is bounded by
\begin{equation}\label{eq:slope-bound-1}
CA_{\nabla\phi^t}\Big( F_N(\ux_N^t,\mu_{\eta_N}^t)+CN^{-\frac{\beta}{2}}\Big)^{\frac{1}{2}}\Big(L+1\Big)^{\frac{1}{2}}.    
\end{equation}
Proceeding in a similar manner, Proposition~\ref{prop:commutator-estimate} implies that~\eqref{eq:slope-split-2} is bounded by
\begin{equation}\label{eq:slope-bound-2}
   CA_{\nabla\phi^t}\|\mu^t*\eta_N-\mu^t\|_{\H}\Big(L+1\Big)^{\frac{1}{2}}.
\end{equation}

Integrating over time, the bounds~\eqref{eq:slope-bound-1} and~\eqref{eq:slope-bound-2} imply that there exists $C_{\phi,L}>0$ so that
\begin{align*}
&\bigg|\int_0^T\int_{(\T^d)\setminus\Delta}\K_{\nabla\phi^t}(x,y)\,\diff \big((\mu_N^t)^{\otimes 2}-(\mu^t)^{\otimes 2}\big)(x,y)\,\diff t\bigg|
\\&\quad\leq C_{\phi,L}\int_0^T\Big( F_N(\ux_N^t,\mu_{\eta_N}^t)+CN^{-\frac{\beta}{2}}\Big)^{\frac{1}{2}}\,\diff t+C_{\phi,L}\int_0^T\|\mu^t*\eta_N-\mu^t\|_{\H}\,\diff t.
\end{align*}
Since $\|\mu_{\eta_N}^t-\mu^t\|_{\H}\rightarrow 0$ as $N\rightarrow\infty$ and $\|\mu^t*\eta_N-\mu^t\|_{\H}\leq C\|\mu^t\|_{\H}$ for all $t\in[0,T]$, the dominated convergence theorem implies that 
\[\lim_{N\rightarrow\infty}\int_0^T\|\mu^t*\eta_N-\mu^t\|_{\H}\,\diff t=0.\]
To complete the proof of the proposition it thus suffices to show that
\[\lim_{N\rightarrow\infty}\int_0^T F_N(\ux_N^t,\mu_{\eta_N}^t)\,\diff t=0.\]
We show this by interpolating between the Wasserstein-1 metric and the uniform bounds on the discrete enstrophies $\int_0^T D_N(\ux_N^t)\,\diff t.$

We first prove an analogous inequality to~\eqref{eq:interpolation_bound}. Given $\underline{y}_N:=(y_1,\dotsc,y_N)\in(\T^d)^N$ with associated empirical measure $\nu_N$ and $\nu\in\Pc(\T^d)\cap L^\infty(\T^d)$ we will control $F_N(\underline{y}_N,\nu).$ Again letting $\g_\delta$ be the truncated potential defined in Proposition~\ref{prop:S-continuity}, we have
\begin{equation}\label{eq:modualted-energy-split} F_N(\underline{y}_N,\nu)=\int_{(\T^d)^2}\g_\delta(x-y)\,\diff (\nu_N-\nu)^{\otimes 2}(x,y)+\int_{(\T^d)^2\setminus\Delta}(\g-\g_\delta)(x-y)\,\diff (\nu_N-\nu)^{\otimes 2}(x,y),
\end{equation}
where we have used that $\g_\delta(0)=0.$ It still holds that there exists $C_\delta>0$ so that
\begin{equation}\label{eq:mod_energy_weak-bound}\int_{(\T^d)^2}\g_\delta(x-y)\,\diff (\nu_N-\nu)^{\otimes 2}(x,y)\leq C_\delta d(\nu_N,\nu)^2.
\end{equation}
We now let $0<\alpha<(d-2-\s)\wedge 2$.~\eqref{eq:periodic-correction} now implies that there exists $C>0$ so that for all sufficiently small $\delta>0$
\[|\g-\g_\delta|\leq \delta^{\frac{\alpha}{2}}C\big((-\Delta)^{\frac{\alpha}{2}}\g+C\big).\]
We thus find that
\begin{align*}
&\int_{(\T^d)^2\setminus\Delta}(\g-\g_\delta)(x-y)\,\diff(\nu_N-\nu)^{\otimes 2}(x,y)
\\&\quad\leq C\delta^{\frac{\alpha}{2}}\bigg(\frac{1}{N^2}\sum_{1\leq i\neq j\leq N}(-\Delta)^{\frac{\alpha}{2}}\g(y_i-y_j)+2\int(-\Delta)^{\frac{\alpha}{2}}\g(x-y)\,\diff \nu_N(x)\,\diff \nu(y)+\|\nu\|_{\dot H^{1+\frac{\s-d}{2}}} ^2+C\bigg).
\end{align*}
Lemma~\ref{lem:renormalized_Holder} and Young's inequality imply that
\[\bigg|\int(-\Delta)^{\frac{\alpha}{2}}\g(x-y)\,\diff \nu_N(x)\,\diff \nu(y)\bigg|\leq C\bigg(\frac{1}{N^2}\sum_{1\leq i\neq j\leq N}(-\Delta)^{\frac{\alpha}{2}}\g(y_i-y_j)+\|\nu\|_{\dot H^{\frac{\alpha+\s-d}{2}}}^2+C\|\nu\|_{L^\infty}N^{-\beta}\bigg).\]
Since $(-\Delta)^{\frac{\alpha}{2}}\g(x)\leq C((-\Delta)\g(x)+C)$ and $\|\nu\|_{\dot H^{\frac{\alpha+\s-d}{2}}}\leq C\|\nu\|_{\dot H^{1+\frac{\s-d}{2}}}$ we thus have that 
\begin{equation}\label{eq:modulated_energy_interpolation}
\int_{(\T^d)^2\setminus\Delta}(\g-\g_\delta)d(\nu_N-\nu)^{\otimes 2}(x,y)\leq C\delta^{\frac{\alpha}{2}}\Big(D_N(\underline{y}_N)+\|\nu\|_{\dot H^{1+\frac{\s-d}{2}}} ^2+C\|\nu\|_{L^\infty}N^{-\beta}+1\Big).
\end{equation}
Combining~\eqref{eq:modualted-energy-split},~\eqref{eq:mod_energy_weak-bound}, and~\eqref{eq:modulated_energy_interpolation}, we have found that
\begin{equation}\label{eq:renormalized_interpolation} F_N(\underline{y}_N,\nu)\leq C_\delta d(\nu_N,\nu)^2+ C\delta^{\frac{\alpha}{2}}\Big(D_N(\underline{y}_N)+\|\nu\|_{\dot H^{1+\frac{\s-d}{2}}}^2 +C\|\nu\|_{L^\infty}N^{-\beta}+1\Big).
\end{equation}

Applying~\eqref{eq:renormalized_interpolation} and the bounds on $\|\mu_{\eta_N}^t\|_{H^{1+\frac{\s-d}{2}}}$ and $\|\mu_{\eta^N}^t\|_{L^\infty}$ in total we have
\begin{align*}
    \int_0^T F_N(\ux_N^t,\mu_{\eta_N}^t)\,\diff t&\leq C_\delta\int_0^Td(\mu_N^t,\mu_{\eta_N}^t)^2\,\diff t+C\delta^{\frac{\alpha}{2}}\int_0^TD_N(\ux_N^t)+\|\mu^t\|_{\dot H^{1+\frac{\s-d}{2}}}^2\,\diff t +C\delta^{\frac{\alpha}{2}} T
    \\&\leq C_\delta\int_0^Td(\mu_N^t,\mu_{\eta_N}^t)^2\,\diff t+C\delta^{\frac{\alpha}{2}} (\sigma^{-1}L+T).
\end{align*}
Since $\mu_N\rightarrow \mu$ and $\mu_{\eta_N}\rightarrow \mu$ in $\Cs^T$ as $N\rightarrow \infty$,
\[\lim_{N\rightarrow \infty}\sup_{t\in[0,T]}d(\mu_N^t,\mu_{\eta_N}^t)=0,\]
and we conclude the proposition by taking $N\rightarrow\infty$ and then $\delta\rightarrow 0.$
\end{proof}

\subsection{Regularity of the rate function}

In this subsection we prove that the sublevel sets of $I_\gamma$ are compact, thus $I_\gamma$ is a good rate function. First, we need the representation mentioned in Remark~\ref{rem:perturbed_representation}. As this follows~\cite[Lemma 4.8]{dawsont_large_1987} exactly we do not give the full proof. 

\begin{lemma}\label{lem:variation} If $I_\gamma(\mu)<\infty$ then there exists $b\in L^2([0,T],L^2(\mu^t))$ so that $\mu$ is a weak solution to~\eqref{eq:mve+b} where $\mu^t\nabla\g*\mu^t$ is defined by~\eqref{def:mu_grad_mu} and 
\[\sup_{\phi\in C^\infty([0,T]\times\T^d)}S(\mu,\phi)=\frac{1}{4\sigma}\int_0^T\int_{\T^d}|b^t(x)|^2\,\diff \mu^t(x)\,\diff t.\]
\end{lemma}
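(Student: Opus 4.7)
The plan is to run the standard Dawson--G\"artner duality argument. First I would decompose $S(\mu,\phi)=L(\mu,\phi)-\sigma\int_0^T\langle\mu^t,|\nabla\phi^t|^2\rangle\,\diff t$, where $L(\mu,\phi)$ collects the terms linear in $\phi$:
\begin{align*}
L(\mu,\phi):=&\langle\mu^T,\phi^T\rangle-\langle\mu^0,\phi^0\rangle-\int_0^T\langle\mu^t,\partial_t\phi^t+\sigma\Delta\phi^t\rangle\,\diff t\\
&+\frac{1}{2}\int_0^T\int_{(\T^d)^2}\K_{\nabla\phi^t}(x,y)\,\diff(\mu^t)^{\otimes 2}(x,y)\,\diff t.
\end{align*}
Since $Q(\mu)<\infty$, Proposition~\ref{prop:commutator-estimate} applied with $\rho=\nu=\mu^t$ shows the last integral converges, so $L(\mu,\phi)$ is a well-defined linear functional of $\phi$. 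Moreover, if $\phi^t=f(t)$ does not depend on $x$ then $\K_{\nabla\phi^t}\equiv 0$ and the remaining terms telescope to $f(T)-f(0)-\int_0^T f'(t)\,\diff t=0$, so $L(\mu,\phi)$ depends only on $\nabla\phi$.

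The next step is to bound $L$ in the weighted gradient norm. Testing with $\lambda\phi$ for $\lambda\in\R$ and using $I_\gamma(\mu)<\infty$,
\[I_\gamma(\mu)\geq\sup_{\lambda\in\R}\Big\{\lambda L(\mu,\phi)-\lambda^2\sigma\int_0^T\langle\mu^t,|\nabla\phi^t|^2\rangle\,\diff t\Big\}=\frac{L(\mu,\phi)^2}{4\sigma\int_0^T\langle\mu^t,|\nabla\phi^t|^2\rangle\,\diff t},\]
which gives $|L(\mu,\phi)|\leq 2\sqrt{\sigma I_\gamma(\mu)}\,\|\nabla\phi\|_{L^2(\diff\mu^t\,\diff t)}$. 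By Hahn--Banach the map $\nabla\phi\mapsto L(\mu,\phi)$ then extends to a continuous linear functional on the Hilbert space $\mathcal{H}:=L^2([0,T]\times\T^d,\,\diff\mu^t\,\diff t;\R^d)$, and the Riesz representation theorem produces a unique $b$ in the closure $V\subset\mathcal{H}$ of $\{\nabla\phi:\phi\in C^\infty\}$ with
\[L(\mu,\phi)=\int_0^T\int_{\T^d}b^t(x)\cdot\nabla\phi^t(x)\,\diff\mu^t(x)\,\diff t\]
for every smooth $\phi$. Unpacking this identity is precisely the weak formulation of~\eqref{eq:mve+b} via the symmetrization~\eqref{def:mu_grad_mu}.

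Finally, substituting back one has $S(\mu,\phi)=\int_0^T\int_{\T^d} b^t\cdot\nabla\phi^t\,\diff\mu^t\,\diff t-\sigma\int_0^T\int_{\T^d}|\nabla\phi^t|^2\,\diff\mu^t\,\diff t$. The pointwise inequality $b\cdot v-\sigma|v|^2\leq |b|^2/(4\sigma)$ bounds the supremum from above by $\tfrac{1}{4\sigma}\int_0^T\int_{\T^d}|b^t|^2\,\diff\mu^t\,\diff t$; since $b\in V$, choosing $\nabla\phi_n\to b/(2\sigma)$ in $\mathcal{H}$ realises this value in the limit, giving the claimed equality. The only genuine technical check in this outline is the well-definedness and $\nabla\phi$-only dependence of $L(\mu,\phi)$; the remainder is standard Hilbert-space duality, which is why the authors refer directly to~\cite{dawsont_large_1987}.
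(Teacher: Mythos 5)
Your proposal is correct and runs essentially the same argument the paper sketches: split $S$ into a part linear in $\phi$ and a quadratic part, observe the linear part factors through $\nabla\phi$, then apply Riesz representation on the $L^2(\diff\mu^t\,\diff t)$-closure of smooth gradients. The only cosmetic difference is that you invoke Hahn--Banach before Riesz (unnecessary, since $L$ is already densely defined on $V$ and bounded) and bound $|L|$ using $I_\gamma(\mu)$ rather than the sharper $\sup_\phi S(\mu,\phi)$, but this affects neither the existence of $b$ nor the final identity, which you recover cleanly via the pointwise optimization.
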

\begin{proof}[Proof sketch]
The main point is that since
\begin{align*}
S(\mu,\phi)=\langle\partial_t\mu-\sigma\Delta\mu-\nabla\cdot(\mu\nabla\g*\mu),\phi \rangle-\int_0^T\langle\mu^t,|\nabla\phi^t|^2\rangle\,\diff t,
\end{align*}
where the first term is linear in $\phi$ and the second term in quadratic in $\phi$ it holds that
\[\sup_{\phi}S(\mu,\phi)=\sup_{\phi}\frac{|\langle\partial_t\mu-\sigma\Delta\mu-\nabla\cdot(\mu\nabla\g*\mu),\phi \rangle|^2}{4\sigma\int_0^T\langle\mu^t,|\nabla\phi^t|^2\rangle\,\diff t}.\]
The proposition then follows by using the Riesz representation theorem with respect to the Hilbert space defined by the closure of $\{\nabla\phi:\phi\in C^\infty([0,T]\times\T^d)\}$ under the $L^2([0,T],L^2(\mu^t))$ norm.
\end{proof}

It is an immediate consequence of Lemma~\ref{lem:Q-lsc} and Proposition~\ref{prop:S-continuity} that $I_\gamma$ is lower semi-continuous. We thus only need to show that sublevel sets are precompact. We use the following representation of precompact sets of $\Cs^T$ given in~\cite[Lemma 1.3]{gartner_mckean-vlasov_1988}.

\begin{lemma}\label{lem:compact_character} Let $R$ be an arbitrary countable dense subset of $C^0(\T^d).$ Then a subset of $\Cs^T$ is relatively compact if and only if it is contained in a set of the form
\[\bigcap_{\psi\in R} \Big\{\mu\in \Cs^T\ \big|\ \langle \mu,\psi\rangle \in K_\psi\Big\},\]
where $K_\psi$ is a compact subset of $C([0,T],\R)$ for each $\psi\in R.$
\end{lemma}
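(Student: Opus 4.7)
The plan is to characterize relative compactness via an Arzel\`a--Ascoli argument that leverages two features of the base: (i) $\Pc(\T^d)$ is itself compact in the weak topology since $\T^d$ is compact, and (ii) weak convergence on $\Pc(\T^d)$ is determined by testing against any countable dense family $R\subset C^0(\T^d)$, because probability measures have uniformly bounded total variation, so $|\langle\mu-\nu,\phi\rangle|\le |\langle\mu-\nu,\psi\rangle|+2\|\phi-\psi\|_{L^\infty}$ for $\psi\in R$.

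For the forward direction, I would fix $\psi\in R$ and observe that the evaluation map $\Phi_\psi:\Cs^T\to C([0,T],\R)$, $\Phi_\psi(\mu)^t:=\langle\mu^t,\psi\rangle$, is continuous: if $\mu_n\to\mu$ uniformly in $t$ with respect to $d$, then $\sup_{t\in[0,T]}|\langle\mu_n^t-\mu^t,\psi\rangle|\to 0$ by a standard Lipschitz approximation of $\psi$ combined with the Wasserstein-1 duality in~\eqref{eq:weak_metric}. Hence if $S\subset\Cs^T$ is relatively compact, $K_\psi:=\overline{\Phi_\psi(S)}$ is compact in $C([0,T],\R)$ and one has $S\subset \bigcap_{\psi\in R}\Phi_\psi^{-1}(K_\psi)$ by construction.

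For the backward direction, suppose $S\subset\bigcap_{\psi\in R}\Phi_\psi^{-1}(K_\psi)$. I would verify the two hypotheses of Arzel\`a--Ascoli for maps $[0,T]\to(\Pc(\T^d),d)$. Pointwise precompactness of $\{\mu^t:\mu\in S\}$ is automatic since $\Pc(\T^d)$ is compact. For equicontinuity in $t$, enumerate $R=\{\psi_n\}_{n\ge1}$ and introduce the metric
\[
\tilde d(\rho,\nu):=\sum_{n\ge 1}2^{-n}\bigl(1\wedge\|\psi_n\|_{L^\infty}^{-1}\bigr)\bigl|\langle\rho-\nu,\psi_n\rangle\bigr|,
\]
which metrizes weak convergence on $\Pc(\T^d)$ by the density of $R$ in $C^0(\T^d)$ and is therefore uniformly equivalent to $d$ on the compact space $\Pc(\T^d)$. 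Each $K_{\psi_n}$, being compact in $C([0,T],\R)$, is equicontinuous by the classical Arzel\`a--Ascoli theorem. Given $\varepsilon>0$, truncate the series at a large $N$ so that the tail is bounded by $\varepsilon/2$ using $|\langle\mu^s-\mu^t,\psi_n\rangle|\le 2\|\psi_n\|_{L^\infty}$, then pick $\delta>0$ uniform over $n\le N$ and $\mu\in S$ for which the first $N$ terms sum to less than $\varepsilon/2$ whenever $|s-t|<\delta$. This yields $\tilde d$-equicontinuity of $S$ and hence $d$-equicontinuity, completing Arzel\`a--Ascoli.

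The only subtle point is justifying the passage from ``coordinatewise'' equicontinuity (each $\Phi_\psi(S)$ equicontinuous for $\psi\in R$) to equicontinuity of the $\Pc(\T^d)$-valued maps themselves; this is precisely handled by replacing $d$ with the equivalent metric $\tilde d$, which requires only countably many coordinates and permits the weighted truncation above. Everything else is routine once compactness of $\Pc(\T^d)$ and density of $R$ are in hand.
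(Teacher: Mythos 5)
The paper does not actually prove this lemma; it cites it as {[Lemma 1.3, G\"artner 1988]} and moves on. Your proposal supplies a complete proof, and it is correct: the standard Arzel\`a--Ascoli argument you give is precisely the natural one for this statement. The forward direction (continuity of $\Phi_\psi$ plus compactness of continuous images) is fine, and the backward direction correctly reduces the problem to equicontinuity of $S$ viewed as a family of $\Pc(\T^d)$-valued maps, using that pointwise precompactness is free since $\Pc(\T^d)$ is compact. The key device of introducing the auxiliary metric
\[
\tilde d(\rho,\nu)=\sum_{n\ge 1}2^{-n}\bigl(1\wedge\|\psi_n\|_{L^\infty}^{-1}\bigr)\bigl|\langle\rho-\nu,\psi_n\rangle\bigr|
\]
and invoking uniform equivalence of metrics on the compact space $\Pc(\T^d)$ is exactly what makes the ``coordinatewise to $\Pc(\T^d)$-valued'' passage rigorous, and your tail/finite-truncation estimate (using $|\langle\mu^s-\mu^t,\psi_n\rangle|\le 2\|\psi_n\|_{L^\infty}$ for the tail and equicontinuity of the finitely many compact sets $K_{\psi_1},\dotsc,K_{\psi_N}$ for the head) is correct. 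One cosmetic point: when $\psi_n=0$ the factor $\|\psi_n\|_{L^\infty}^{-1}$ is undefined; either discard the zero function from $R$ or replace the weight by, say, $(1+\|\psi_n\|_{L^\infty})^{-1}$. This does not affect the argument. Given the paper leaves this as a citation, your write-up is a faithful and complete reconstruction of the expected standard proof.
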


\begin{proof}[Proof of Item~\ref{item:goodness} in Theorem~\ref{thm:LDP}]
Using Lemma~\ref{lem:compact_character}, it suffices to show that for all $L>0,$ $\psi\in C^\infty(\T^d)$, and $\eps>0$, there exists $\delta>0$ so that
\[\big\{\mu\in\Cs^T\mid I_\gamma(\mu)\leq L\big\}\subset\bigg\{\mu\in\Cs^T\ \Big|\ \sup_{0\leq s\leq t\leq T:|s-t|\leq \delta}|\langle\mu^t-\mu^s,\psi\rangle|\leq \eps\bigg\}.\]
Applying Lemma~\ref{lem:variation} and the triangle inequality we see that
\begin{align*}
    |\langle \mu^t-\mu^s,\psi\rangle |&=\bigg|\int_s^t  \langle \sigma\Delta\mu^\tau-\nabla\cdot(\mu^\tau\nabla\g*\mu^\tau)+\nabla\cdot(b^\tau\mu^\tau),\psi\rangle\,\diff \tau\bigg|
    \\&\leq \bigg|\int_s^t \langle \mu^\tau,\sigma\Delta\psi\rangle\,\diff \tau\bigg|+\bigg| \int_s^t \langle \nabla\cdot(\mu^\tau\nabla\g*\mu^\tau),\psi\rangle \,\diff \tau\bigg|+\bigg|\int_s^t\int \nabla\psi\cdot b^\tau\,\diff \mu^\tau\,\diff \tau\bigg|.
\end{align*}
The first term is bounded by $\|\Delta\psi\|_{L^\infty}(t-s)$. Using Proposition~\ref{prop:commutator-estimate} and that
\[\sup_{t\in[0,T]} \|\mu^t\|_{\H}^2\leq Q(\mu)\leq (2\sigma L+\Ec(\gamma)),\]
we can bound the second term as follows
\[\bigg| \int_s^t \langle \nabla\cdot(\mu^\tau\nabla\g*\mu^\tau),\psi\rangle \,\diff \tau\bigg|\leq C\int_s^t A_{\nabla\psi}\|\mu^\tau\|_{\H}(\|\mu^\tau\|_{\H}+1)\,\diff \tau\leq CA_{\nabla\psi} (t-s)(2\sigma L+\Ec(\gamma)+1).\]
Finally, using Cauchy--Schwarz we have that
\[\bigg|\int_s^t\int \nabla\psi\cdot b^\tau\,\diff \mu^\tau\,\diff \tau\bigg| \leq  \bigg(\int_s^t \int |\nabla\psi|^2\,\diff \mu^\tau\,\diff \tau\bigg)^{1/2}\bigg(\int_s^t\int|b^\tau|^2\,\diff \mu^\tau\,\diff \tau\bigg)^{1/2}\leq (t-s)^{1/2}\|\nabla\psi\|_{L^\infty}(4\sigma L)^{1/2}.\]
Thus if $(t-s)$ is taken to be sufficiently small it can be guaranteed that $|\langle \mu^t-\mu^s,\psi\rangle |\leq \eps.$
\end{proof}

\section{Upper bounds}\label{sec:upper_bounds}

In this section, we prove the LDP upper bounds: Item~\ref{item:LDP-upper-bound} in Theorem~\ref{thm:LDP} and the first inequality in Theorem~\ref{thm:local-LDP}. The proof of Item~\ref{item:LDP-upper-bound} is broken into two parts.  We first show that $\mu_N$ are exponentially tight in $\Cs^T$, and then we show that for all $\mu\in\Cs^T$
\[\lim_{\eps\rightarrow0}\limsup_{N\rightarrow\infty}\frac{1}{N}\log\P\big(\mu_N\in B_\eps(\mu)\big)\leq -I_\gamma(\mu).\]
The latter implies that $\mu_N$ satisfy a weak LDP upper bound, that is Item~\ref{item:LDP-upper-bound} holds when the set $F$ is compact. With exponential tightness, this weak upper bound implies a full upper bound. The first inequality in Theorem~\ref{thm:local-LDP} follows almost immediately from the local upper bound estimates as the modulated energy controls weak convergence.

\subsection{Exponential tightness}
The proof that $\mu_N$ are exponentially tight follows the proof of exponential tightness for systems with regular interactions given in~\cite[Lemma 5.6]{dawsont_large_1987} closely. We however have to use Proposition~\ref{prop:energy-estimate} to appropriately control some terms involving $\g$.

\begin{proposition}
If Assumption~\eqref{cond:initial-convergence} holds, then the empirical trajectories $(\mu_N)_{N\geq1}$ associated to~\eqref{eq:SDE} are exponentially tight in $\Cs^T$. That is, for all $L>0$ there exists a compact set $\mathscr{K}_L\subset \Cs^T$ so that
\[\limsup_{N\rightarrow\infty}\frac{1}{N}\log\P(\mu_N\in \mathscr{K}_L^c)\leq -L.\]
\end{proposition}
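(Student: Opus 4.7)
The plan is to follow the standard Dawson-G\"artner scheme (see~\cite[Lemma 5.6]{dawsont_large_1987}), adapted to the singular interaction using Proposition~\ref{prop:energy-estimate}. By Lemma~\ref{lem:compact_character}, it suffices to show that for each $\psi$ in a fixed countable dense subset $\{\psi_k\}_{k\geq 1}\subset C^\infty(\T^d)$, the real-valued process $t\mapsto \langle \mu_N^t,\psi_k\rangle$ concentrates exponentially in a precompact subset of $C([0,T],\R)$. By the Arzel\`a-Ascoli theorem this reduces to giving exponential bounds on the modulus of continuity of these processes.

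It\^o's formula applied to $\langle \mu_N^t,\psi\rangle=\frac{1}{N}\sum_{i=1}^N\psi(x_i^t)$, together with the usual symmetrisation exploiting the anti-symmetry of $\nabla\g$, yields for any $0\leq s\leq t\leq T$
\begin{align*}
\langle \mu_N^t-\mu_N^s,\psi\rangle&=\sigma\int_s^t\langle\mu_N^\tau,\Delta\psi\rangle\,\diff\tau-\frac{1}{2}\int_s^t\int_{(\T^d)^2\setminus\Delta}\K_{\nabla\psi}(x,y)\,\diff(\mu_N^\tau)^{\otimes 2}(x,y)\,\diff\tau\\
&\qquad +(M_\psi^t-M_\psi^s),
\end{align*}
where $M_\psi^t$ is a continuous martingale with quadratic variation bounded by $2\sigma T\|\nabla\psi\|_{L^\infty}^2/N$. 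The first drift term contributes at most $\sigma\|\Delta\psi\|_{L^\infty}(t-s)$. Applying the pointwise inequality recorded just after~\eqref{def:mu_grad_mu} to $\nabla\psi$ gives $|\K_{\nabla\psi}(x,y)|\leq C\|\nabla^2\psi\|_{L^\infty}(\g(x-y)+C)$, so the singular drift is bounded by $C\|\nabla^2\psi\|_{L^\infty}(t-s)\big(\sup_\tau H_N(\ux_N^\tau)+C\big)$. Since $(-\Delta)\g$ is bounded below we have $H_N(\ux_N^\tau)\leq Q_N(\ux_N)+C\sigma T$ uniformly in $\tau\in[0,T]$, and Proposition~\ref{prop:energy-estimate} provides the needed exponential concentration of $Q_N$. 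Finally, Lemma~\ref{lemma:exponential-doobs} applied to $\exp(\lambda M_\psi^t-\tfrac{\lambda^2}{2}\langle M_\psi\rangle^t)$ on shifted intervals of length $\delta$, followed by optimising in $\lambda$ and a union bound, gives
\[\P\bigg(\sup_{\substack{0\leq s<t\leq T\\|t-s|\leq\delta}}|M_\psi^t-M_\psi^s|\geq \eta\bigg)\leq \frac{CT}{\delta}\exp\bigg(-\frac{cN\eta^2}{\delta\|\nabla\psi\|_{L^\infty}^2}\bigg)\qquad\text{for all }\eta,\delta>0.\]

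To assemble the argument, fix $L>0$ and set $L_0:=\Ec(\gamma)+2\sigma L+1$ so that $\P(Q_N\geq L_0)\leq e^{-NL}$ for $N$ large. For each $k,m\geq 1$, pick $\delta_{k,m}>0$ small enough that, on $\{Q_N\leq L_0\}$, the drift contribution to $\sup_{|t-s|\leq\delta_{k,m}}|\langle\mu_N^t-\mu_N^s,\psi_k\rangle|$ is at most $\tfrac{1}{2m}$, and then apply the above martingale estimate with $\eta=\tfrac{1}{2m}$ and $\delta=\delta_{k,m}$ to obtain an event of probability at least $1-2^{-k-m}e^{-NL}$ (for $N$ large) on which $\sup_{|t-s|\leq\delta_{k,m}}|M_{\psi_k}^t-M_{\psi_k}^s|\leq \tfrac{1}{2m}$. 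Let $K_k\subset C([0,T],\R)$ be the Arzel\`a-Ascoli compact set of $f$ with $\|f\|_\infty\leq\|\psi_k\|_{L^\infty}$ and modulus $\omega(f,\delta_{k,m})\leq 1/m$ for every $m\geq 1$, and define $\mathscr{K}_L:=\bigcap_k\{\mu\in\Cs^T:\langle\mu,\psi_k\rangle\in K_k\}$. Lemma~\ref{lem:compact_character} gives that $\mathscr{K}_L$ is compact, while a union bound yields $\P(\mu_N\notin\mathscr{K}_L)\leq 2e^{-NL}$ for $N$ large, which upon relabelling $L$ gives the stated bound.

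The main technical subtlety is the uniform-in-time control of the singular drift; this is exactly what the \textit{a priori} energy bound of Proposition~\ref{prop:energy-estimate} provides, and it crucially relies on $\s<d-2$ so that $(-\Delta)\g$ is bounded below. With this input the remainder is a routine combination of the exponential martingale inequality with the Arzel\`a-Ascoli characterisation of compactness, closely paralleling~\cite[Lemma 5.6]{dawsont_large_1987}.
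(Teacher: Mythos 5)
Your proof is correct and follows essentially the same route as the paper: reduce to exponential concentration of the scalar processes $\langle\mu_N^\cdot,\psi_k\rangle$ via Lemma~\ref{lem:compact_character}, apply It\^o and symmetrize, control the singular drift through the discrete energy $H_N$ and the \emph{a priori} bound on $Q_N$ from Proposition~\ref{prop:energy-estimate}, and handle the martingale part with the exponential martingale inequality. The only difference is one of presentation: the paper delegates the Arzel\`a--Ascoli modulus-of-continuity bookkeeping to~\cite[Lemma 5.6]{dawsont_large_1987}, whereas you spell it out. One minor imprecision worth tightening: the repeated qualifier ``for $N$ large'' in the martingale estimates should be made uniform over $(k,m)$, since the $N$-threshold for each term in the intersection could otherwise diverge; this is easily fixed by choosing $\delta_{k,m}$ so that the exponent $\frac{cN}{4m^2\delta_{k,m}\|\nabla\psi_k\|^2}$ dominates $N(L+k+m+1)$ while the polynomial prefactor $CT/\delta_{k,m}$ is absorbed into the geometric tail of the sum, giving a bound valid for all $N\geq 1$. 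With that bookkeeping your argument is complete.
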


\begin{proof}
If we can show that for all $R>0$, $\psi\in C^\infty(\T^d)$, and $\alpha>0$ there exists a compact set $K_{\alpha,\psi}\subset C([0,T],\R)$ so that
\begin{equation}\label{eq:local-exponential-bound}
    \P\big(\langle \mu_N,\ \psi \rangle\in K_{\alpha,\psi}^c, Q_N(\ux_N)\leq R\big)\leq e^{-N\alpha},
\end{equation}
then we can conclude the proposition. Indeed,~\eqref{eq:energy-control} implies that there exists $R_L>0$ so that
\[\limsup_{N\rightarrow\infty}\frac{1}{N}\log\P\big(Q_N(\ux_N)> R_L\big)\leq -L.\]
Letting $\{\psi_\ell\}_{\ell\geq 1}\subset C^\infty(\T^d)$ be a dense subset of $C^0(\T^d)$ and $K_{L+\ell,\psi_\ell}$ the compact set so that~\eqref{eq:local-exponential-bound} holds, then
\[\mathscr{K}_L:=\bigcap_{\ell\geq 1} \Big\{\nu\in \Cs^T\mid\langle \nu,\psi_\ell\rangle\in K_{L+\ell,\psi_\ell}\Big\}\]
is relatively compact by Lemma~\ref{lem:compact_character} and
\begin{align*}
    \P(\mu_N\in K_L^c)&\leq \sum_{\ell\geq 1} \P\big(\langle \mu_N,\psi_\ell\rangle\in K_{L+\ell,\psi_\ell}^c,Q_N(\ux_N\big)\leq R_L\big)+\P\big(Q_N(\ux_N)> R_L\big)
    \\&\leq \sum_{\ell\geq 1} e^{-N(L+\ell)}+\P\big(Q_N(\ux_N)> R_L\big)
    \\&\leq Ce^{-NL} +\P\big(Q_N(\ux_N)>R_L\big).
\end{align*}
We thus find that the proposition holds by our choice of $R_L.$

Proceeding accordingly, we fix $R,\psi$ and $\alpha.$
Applying It\^o's formula to $\langle \mu_N^t,\psi^t\rangle$ we have that
\begin{align*}\langle \mu_N^t,\psi\rangle-\langle \mu_N^s,\psi\rangle&= -\int_s^t\frac{1}{N}\sum_{i=1}^N\nabla\psi(x_i^\tau)\cdot\bigg(\frac{1}{N}\sum_{1\leq j\leq N:j\neq i}^N\nabla\g(x_i^\tau-x_j^\tau)\bigg)\,\diff \tau+\sigma \int_s^t\langle\mu_N^\tau, \Delta\psi\rangle\,\diff \tau 
\\&+\frac{\sqrt{2\sigma}}{N}\sum_{i=1}^N\int_s^t \nabla\psi(x_i^\tau)\cdot \diff w_i^\tau.
\end{align*}
Fixing $s$, the last term above is a martingale with respect to the filtration generated by the noise, which we denote by $M_s^t$. It has bounded quadratic variation
\[\langle M_s \rangle^t=\frac{2\sigma}{N}\int_s^t\langle\mu_N^\tau,|\nabla\psi|^2\rangle\,\diff \tau.\]
Since there exists some constant $C>0$ so that $|x||\nabla\g(x)|\leq C(\g(x)+C)$
\begin{align*}
\frac{1}{N}\sum_{i=1}^N\nabla\psi(x_i)\cdot\bigg(\frac{1}{N}\sum_{1\leq j\leq N:j\neq i}^N\nabla\g(x_i-x_j)\bigg)&=\frac{1}{2N^2}\sum_{1\leq i\neq j\leq N} \big(\nabla\psi(x_i)-\nabla\psi(x_j)\big)\cdot\nabla\g(x_i-x_j)\\
&\leq C \|\nabla\psi\|_{L^\infty}\Big(H_N(\ux_N)+C\Big),
\end{align*}
for all $\ux_N\in (\T^d)^N$. Thus when $Q_N(\ux_N)\leq R$, there exists some $\kappa(d,\s,\sigma,R,\|\nabla\psi\|_{L^\infty},\|\Delta\psi\|_{L^\infty})>0$ so that for all $\alpha>0$
\[\langle \mu_N^t,\psi\rangle-\langle \mu_N^s,\psi\rangle\leq \kappa(1+\alpha)(t-s)+M_s^t-\frac{N\alpha}{2}\langle M_s\rangle^t.\]
The rest of the proof proceeds identically to \cite[Lemma 5.6]{dawsont_large_1987} starting at page 300.
\end{proof}

\subsection{Local upper bound}

Before proving the local upper bounds we precisely state how the modulated energy controls weak convergence. This shows that Assumption~\ref{cond:initial-convergence} guarantees that the initial empirical measures $\mu_N^0$ weakly converge to $\gamma$.

\begin{lemma}\label{lem:weak_control} There exists $C,\beta>0$ so that for any $\psi\in C^\infty(\T^d)$, $\ux_N\in(\T^d)^N$ pairwise distinct, and $\mu\in \Pc(\T^d)\cap L^\infty(\T^d)$ 
\[\bigg|\int_{\T^d} \psi(x)\,\diff \Big(\frac{1}{N}\sum_{i=1}^N\delta_{x_i}-\mu\Big)(x) \bigg|\leq C\Big(\|\nabla \psi\|_{L^\infty(\T^d)}+\|\psi\|_{\dot{H}^\frac{d-\s}{2}(\T^d)}\Big)\Big(F_N(\ux_N,\mu)+C\|\mu\|_{L^\infty(\T^d)}N^{-\beta}\Big)^{1/2}.\]
\end{lemma}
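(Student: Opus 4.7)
The plan is to use the smearing (truncation) technique standard in the modulated energy literature to convert the modulated pseudo-distance $F_N$ into an honest Sobolev norm to which we can apply duality. Concretely, for a parameter $\eta > 0$ small, let $\delta_{x_i}^{(\eta)}$ denote the uniform probability measure on $\partial B(x_i,\eta)\subset\T^d$, and set $\mu_N^{(\eta)}:=\frac{1}{N}\sum_{i=1}^N\delta_{x_i}^{(\eta)}$. Each $\delta_{x_i}^{(\eta)}$ is supported in $\overline{B(x_i,\eta)}$ and has total mass one, and the Euclidean Riesz mean-value property together with~\eqref{eq:periodic-correction} give $\g*\delta_{x_i}^{(\eta)}(y)=\g(y-x_i)$ whenever $|y-x_i|\geq \eta$, and $\g*\delta_{x_i}^{(\eta)}(y)\leq C(\eta^{-\s}\indc_{\s>0}+|\log\eta|\indc_{\s=0}+1)$ otherwise.

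Next, I would use the triangle inequality
\[
\bigg|\int_{\T^d}\psi\,\diff(\mu_N-\mu)\bigg|\leq \bigg|\int_{\T^d}\psi\,\diff(\mu_N-\mu_N^{(\eta)})\bigg|+\bigg|\int_{\T^d}\psi\,\diff(\mu_N^{(\eta)}-\mu)\bigg|.
\]
Since each $\delta_{x_i}-\delta_{x_i}^{(\eta)}$ has vanishing mean and is supported in $\overline{B(x_i,\eta)}$, the first term is bounded by $\|\nabla\psi\|_{L^\infty}\eta$. For the second term, since $\mu_N^{(\eta)}$ and $\mu$ are honest $L^1$ (in fact $L^\infty$, for $\mu$) measures with identical total mass, Plancherel's identity yields the dual pairing
\[
\bigg|\int_{\T^d}\psi\,\diff(\mu_N^{(\eta)}-\mu)\bigg|\leq \|\psi\|_{\dot H^{\frac{d-\s}{2}}}\|\mu_N^{(\eta)}-\mu\|_{\H}.
\]

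The key technical step is then to show that for an appropriate choice of $\eta$ (e.g., $\eta=N^{-1/d}$),
\begin{equation}\label{eq:smeared-coercivity}
\|\mu_N^{(\eta)}-\mu\|_{\H}^2\leq C\bigl(F_N(\ux_N,\mu)+C\|\mu\|_{L^\infty}N^{-\beta}\bigr)
\end{equation}
for some $\beta=\beta(d,\s)>0$. Expanding $\c_{d,\s}\|\mu_N^{(\eta)}-\mu\|_{\H}^2=\int\g\,\diff(\mu_N^{(\eta)}-\mu)^{\otimes 2}$ and comparing with $F_N(\ux_N,\mu)=\int_{(\T^d)^2\setminus\Delta}\g\,\diff(\mu_N-\mu)^{\otimes 2}$, one sees that their difference splits into: (i) a diagonal self-energy $\frac{1}{N^2}\sum_i\int\int\g(x-y)\diff\delta_{x_i}^{(\eta)}(x)\diff\delta_{x_i}^{(\eta)}(y)\leq \frac{C}{N\eta^\s}$ (or a $\log$ analog when $\s=0$); (ii) a replacement of $\g(x_i-x_j)$ by $\int\int\g$ against the smeared measures for $i\neq j$, which by the mean-value property differs only on near-diagonal pairs and can be absorbed; (iii) a cross-term $\int (\g*\mu)\,\diff(\mu_N^{(\eta)}-\mu_N)$, controlled by $\|\g*\mu\|_{C^{0,\alpha}}\eta^\alpha$ or (sharper) by $\|\mu\|_{L^\infty}\eta^{d-\s}$ using the Riesz-kernel regularity. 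Choosing $\eta=N^{-1/d}$ makes all three error terms of size $N^{-\beta}\|\mu\|_{L^\infty}$ (using $\s<d-2<d$), yielding~\eqref{eq:smeared-coercivity}. Combining the three bounds and using $\|\nabla\psi\|_{L^\infty}\eta\leq \|\nabla\psi\|_{L^\infty}(CN^{-\beta})^{1/2}$ for $N$ large (after adjusting $\beta$) then gives the stated inequality.

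The main obstacle is verifying~\eqref{eq:smeared-coercivity} with sharp constants, in particular tracking the diagonal self-energy contribution and the smoothing error against the background $\mu$. These estimates are standard in the modulated-energy framework and mirror the positivity bound~\eqref{eq:modulated_energy_positivity} and the renormalization arguments used in the proof of Proposition~\ref{prop:renormalized_commutator_estimate}, so the lemma fits naturally into Appendix~\ref{appendix} alongside them.
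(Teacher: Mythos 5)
Your proposal matches the paper's argument: the triangle inequality against the smeared measure $\mu_N^{(\eta)}$, the $\|\nabla\psi\|_{L^\infty}\eta$ bound for $\int\psi\,\diff(\mu_N-\mu_N^{(\eta)})$, the $\dot H^{\frac{d-\s}{2}}$--$\dot H^{\frac{\s-d}{2}}$ duality for the remaining term, and the coercivity bound $\|\mu_N^{(\eta)}-\mu\|_{\H}^2\lesssim F_N(\ux_N,\mu)+C\|\mu\|_{L^\infty}N^{-\beta}$ at $\eta=N^{-1/d}$. The only difference is that you re-derive the coercivity estimate by hand, whereas the paper simply invokes Proposition~\ref{prop:modulated_monotonicity}, which states exactly that bound.
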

We give the proof in Subsection~\ref{subsec:modulated_energy inequalities} in the Appendix.

\begin{proposition}\label{prop:upper-bound} Given Assumption~\ref{cond:initial-convergence}, for all $\mu\in \Cs^T$
\[\lim_{\eps\rightarrow0}\limsup_{N\rightarrow\infty}\frac{1}{N}\log\P\big(\mu_N\in B_\eps(\mu)\big)\leq -I_\gamma(\mu).\]
\end{proposition}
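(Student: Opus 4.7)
The plan is to follow the Kipnis--Olla--Varadhan exponential martingale scheme, but use the a priori bound of Proposition~\ref{prop:energy-estimate} to cut off high-energy events and the continuity result of Proposition~\ref{prop:S_N-continuity} to pass from $S_N$ to $S$ on the complementary low-energy regime. Fix $\phi \in C^\infty([0,T]\times\T^d)$. Applying It\^o's formula to $\langle \mu_N^t,\phi^t\rangle$ along the solution of~\eqref{eq:SDE} and symmetrizing the drift yields a continuous martingale $M^t$ (adapted to the Brownian filtration) with quadratic variation
\[ \langle M\rangle^T = \tfrac{2\sigma}{N}\int_0^T\langle\mu_N^\tau,|\nabla\phi^\tau|^2\rangle\,\diff\tau \leq \tfrac{2\sigma T\|\nabla\phi\|_{L^\infty}^2}{N}, \qquad S_N(\ux_N,\phi) = M^T - \tfrac{N}{2}\langle M\rangle^T. \]
The boundedness of $\langle M\rangle^T$ trivially verifies Novikov's condition, so $\exp(NS_N(\ux_N,\phi)) = \exp(NM^T - \tfrac{N^2}{2}\langle M\rangle^T)$ is a true martingale of expectation $1$.

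\textbf{Chebyshev with energy truncation.} Using this exponential martingale, for every $L>0$,
\[ \P\big(\mu_N\in B_\eps(\mu),\,Q_N(\ux_N)\leq L\big) \leq \exp\Big(-N \inf_{\ux_N\in A_{\eps,L}} S_N(\ux_N,\phi)\Big), \]
where $A_{\eps,L} := \{\ux_N\in C([0,T],(\T^d)^N) : \mu_N\in B_\eps(\mu),\,Q_N(\ux_N)\leq L\}$. Combining with the tail estimate $\P(Q_N(\ux_N)>L) \leq \exp(-\tfrac{N}{2\sigma}(L-\Ec(\gamma))+o(N))$ from Proposition~\ref{prop:energy-estimate}, together with $\log(a+b)\leq \log 2+\max(\log a,\log b)$,
\[ \limsup_{N\to\infty}\tfrac{1}{N}\log\P(\mu_N\in B_\eps(\mu)) \leq \max\bigg( -\inf_{\ux_N\in A_{\eps,L}} S_N(\ux_N,\phi),\ -\tfrac{1}{2\sigma}(L-\Ec(\gamma))\bigg). \]

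\textbf{Passing to the limit.} If $\mu^0\neq\gamma$, Assumption~\ref{cond:initial-convergence} and Lemma~\ref{lem:weak_control} (applied to $\psi\in C^\infty(\T^d)$) force $d(\mu_N^0,\gamma)\to 0$, so $\mu_N\notin B_\eps(\mu)$ for small $\eps$ and large $N$ and the bound is trivial (with $-I_\gamma(\mu)=-\infty$). Assume now $\mu^0=\gamma$. To get the bound $-\sup_\phi S(\mu,\phi)$: fix $\phi$, choose any $L>Q(\mu)$ (permissible when $Q(\mu)<\infty$), and argue by contradiction that
\[ \liminf_{\eps\to 0,\ N\to\infty}\ \inf_{\ux_N\in A_{\eps,L}} S_N(\ux_N,\phi)\ \geq\ S(\mu,\phi), \]
since any near-minimizing sequence $\ux_N^{(k)}$ satisfies $\mu_N^{(k)}\to\mu$ in $\Cs^T$ and $Q_{N_k}(\ux_N^{(k)})\leq L$, whence Proposition~\ref{prop:S_N-continuity} gives $S_{N_k}(\ux_N^{(k)},\phi)\to S(\mu,\phi)$. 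Sending $L\to\infty$ kills the second term in the $\max$ and optimizing over $\phi$ yields $\limsup\leq -\sup_\phi S(\mu,\phi)$. For the bound $-\tfrac{1}{2\sigma}(Q(\mu)-\Ec(\gamma))$: pick any $L<Q(\mu)$. By the $\Gamma$-liminf Lemma~\ref{lem:gamma_limit}, any sequence $\ux_N^{(k)}$ with $\mu_N^{(k)}\to\mu$ in $\Cs^T$ satisfies $\liminf Q_{N_k}\geq Q(\mu)>L$, so $A_{\eps,L}=\emptyset$ for $\eps$ small and $N$ large; the first term in the $\max$ is thus $-\infty$, leaving $\limsup\leq -\tfrac{1}{2\sigma}(L-\Ec(\gamma))$. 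Letting $L\nearrow Q(\mu)$ (or $L\to\infty$ when $Q(\mu)=\infty$) gives the sub-bound. Taking the minimum of the two sub-bounds gives $-I_\gamma(\mu)$.

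\textbf{Main obstacle.} The crux is the uniform lower bound $\liminf_{\eps\to 0,\,N\to\infty}\inf_{A_{\eps,L}}S_N(\ux_N,\phi)\geq S(\mu,\phi)$, which converts the sequential convergence of Proposition~\ref{prop:S_N-continuity} into a statement about infima. The enstrophy truncation is essential here: without the restriction $Q_N(\ux_N)\leq L$, $S_N$ can fail to converge along sequences with exploding discrete energies or enstrophies, and the Chebyshev bound would be useless because the singular term in $S_N$ is unbounded in general. The energy tail estimate of Proposition~\ref{prop:energy-estimate} is precisely what makes discarding this bad set exponentially cheap.
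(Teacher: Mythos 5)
Your proof is correct and takes essentially the same approach as the paper's: the exponential martingale/Chebyshev bound on the truncated event $\{Q_N\leq L\}$, the energy tail estimate of Proposition~\ref{prop:energy-estimate} on the complement, Proposition~\ref{prop:S_N-continuity} (plus the sequence-extraction argument) to pass from $S_N$ to $S$ on the truncated set, and Lemma~\ref{lem:gamma_limit} for the $Q(\mu)$ sub-bound. The only cosmetic difference is bookkeeping: you extract both sub-bounds from the single Chebyshev/union estimate by choosing $L$ above or below $Q(\mu)$, while the paper derives the $-\tfrac{1}{2\sigma}(Q(\mu)-\Ec(\gamma))$ sub-bound first, directly from Lemma~\ref{lem:gamma_limit} and~\eqref{eq:energy-control} without any martingale, and then uses Chebyshev only for the $-\sup_\phi S(\mu,\phi)$ sub-bound.
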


\begin{proof}
Assumption~\ref{cond:initial-convergence} and Lemma~\ref{lem:weak_control} imply that $\mu_N^0\rightarrow \gamma$ in $\Pc(\T^d)$. Thus if $\mu^0\neq \gamma$, then
\[\limsup_{N\rightarrow\infty}\frac{1}{N}\log\P\big(\mu_N\in B_\eps(\mu)\big)=-\infty\]
for any sufficiently small $\eps>0.$

On the other hand, Lemma~\ref{lem:gamma_limit} implies that for all $\delta>0$ there exists $\eps>0$ so that for all  sufficiently large $N$
\[\Big\{\ux_N\ \big|\ \mu_N\in B_\eps(\mu)\Big\}\subset \Big\{\ux_N\ \big|\ Q_N(\ux_N)>Q(\mu)\wedge\tfrac{1}{\delta} -\delta\Big\}.\]
The inequality~\eqref{eq:energy-control} thus implies that
\[\limsup_{N\rightarrow\infty}\frac{1}{N}\log\P\big(\mu_N\in B_\eps(\mu)\big)\leq -\frac{1}{2\sigma}\big(Q(\mu)\wedge\tfrac{1}{\delta} -\delta-\Ec(\gamma)\big).\]
Taking $\delta\rightarrow 0$ we then find that
\[\lim_{\eps\rightarrow 0}\limsup_{N\rightarrow\infty}\frac{1}{N}\log\P\big(\mu_N\in B_\eps(\mu)\big)\leq -\frac{1}{2\sigma}\big(Q(\mu)-\Ec(\gamma)\big).\]

To complete the proposition it suffices to show that if $Q(\mu)<\infty$, then
\[\lim_{\eps\rightarrow 0}\limsup_{N\rightarrow\infty}\frac{1}{N}\log\P\big(\mu_N\in B_\eps(\mu)\big)\leq -\sup_{\phi\in C^\infty}S(\mu,\phi).\]
Fixing $\phi\in C^\infty$, It\^o's formula gives us the equality
\begin{align*}
\langle \mu_N^t,\phi^t\rangle&=\langle \mu_N^0,\phi^0\rangle -\int_0^t\frac{1}{N}\sum_{i=1}^N \nabla\phi^\tau(x_i^\tau)\cdot\bigg(\frac{1}{N}\sum_{1\leq j\leq N:j\neq i}\nabla\g(x_i^\tau-x_j^\tau)\bigg)\,\diff \tau
\\&\quad+\sigma\int_0^t \langle \mu_N^\tau,\partial_t\phi^\tau+\sigma\Delta\phi^\tau\rangle\,\diff \tau+\frac{\sqrt{2\sigma}}{N}\sum_{i=1}^N\int_0^t \nabla\phi^\tau(x_i^\tau)\cdot \diff w_i^\tau.
\end{align*}
The last term, which we call $M^t$, is a martingale with respect to the filtration generated by the noise with bounded quadratic variation equal to
\[\langle M\rangle^t=\frac{2\sigma}{N}\int_0^t\langle\mu_N^\tau,|\nabla\phi^\tau|^2\rangle\,\diff \tau.\]
Noting that
\[\frac{1}{N}\sum_{i=1}^N \nabla\phi^\tau(x_i^\tau)\cdot\bigg(\frac{1}{N}\sum_{1\leq j\leq N:j\neq i}\nabla\g(x_i^\tau-x_j^\tau)\bigg)=\frac{1}{2}\int_{(\T^d)^2\setminus\Delta}(\nabla\phi^\tau(x)-\nabla\phi^\tau(y))\cdot\nabla\g(x-y)\,\diff (\mu_N^\tau)^{\otimes 2}\]
after rearranging we thus find
\begin{align*}
    S_N(\ux_N,\phi)=M^T-\frac{N}{2}\langle M\rangle^T.
\end{align*}

For any $L>0$ we have the union bound
\[\P\big(\mu_N\in B_\eps(\mu)\big)\leq \P(\mu_N\in B_\eps(\mu),Q_N(\ux_N)\leq L)+\P(Q_N(\ux_N)> L).\]
Thus letting $A_{N,\eps,L}:=\big\{\ux_N\in C([0,T],(\T^d)^N)\mid\mu_N\in B_\eps(\mu),Q_N(\ux_N)\leq L\big\}$, Chebyshev's inequality implies that
\begin{align*}
\P\big(\mu_N\in B_\eps(\mu),Q_N(\ux_N)\leq L\big)&= \E \Big[\exp\Big(-NS_N(\ux_N,\phi)\Big)\exp\Big(N M^t-\tfrac{N^2}{2}\langle M\rangle^t\Big) \indc_{A_{N,\eps,L}}\Big]
\\&\leq \exp\Big(-N\inf_{\ux_N\in A_{N,\eps,L}}S_N(\ux_N,\phi)\Big)\E\Big[\exp(N M^t-\tfrac{N^2}{2}\langle M\rangle^t)\Big]
\\&=\exp\Big(-N\inf_{\ux_N\in A_{N,\eps,L}}S_N(\ux_N,\phi)\Big),
\end{align*}
where the last line follows since $\exp(N M^t-\frac{N^2}{2}\langle M\rangle^t)$ is a martingale with constant expectation equal to 1.
We have thus found that
\begin{align*}&\lim_{\eps\rightarrow0}\limsup_{N\rightarrow\infty}\frac{1}{N}\log\P\big(\mu_N\in B_\eps(\mu)\big)
    \\&\quad\leq \Big( -\lim_{\eps\rightarrow0}\liminf_{N\rightarrow\infty}\inf_{\ux_N\in A_{N,\eps,L}}S_N(\ux_N,\phi)\Big)\vee\Big(\limsup_{N\rightarrow\infty}\frac{1}{N}\log\P(Q_N(\ux_N)>L) \Big).
\end{align*}
Proposition~\ref{prop:S_N-continuity} immediately implies that
\[ \lim_{\eps\rightarrow0}\liminf_{N\rightarrow\infty}\inf_{\ux_N\in A_{N,\eps,L}}S_N(\ux_N,\phi)\geq S(\mu,\phi).\]
In total with~\eqref{eq:energy-control} we have found that
\[\lim_{\eps\rightarrow0}\limsup_{N\rightarrow\infty}\frac{1}{N}\log\P\big(\mu_N\in B_\eps(\mu)\big)\leq -\Big(S(\mu,\phi)\wedge\frac{1}{2\sigma}\big(L-\Ec(\gamma)\big)\Big).\]
Sending $L\rightarrow\infty$ and optimizing over $\phi$ completes the proof.
\end{proof}

\subsection{First inequality in Theorem~\ref{thm:local-LDP}}

\begin{proof}[Proof of first inequality in Theorem~\ref{thm:local-LDP}]

We let $\H_0(\T^d)$ denote the closure of zero-mean $C^\infty(\T^d)$ functions with respect the the $\H(\T^d)$ norm. Then using that the space of zero-mean $L^\infty$ functions compactly embeds into $\H_0(\T^d)$ and the weak continuity of $\mu$, we find that if $t_k\rightarrow t$ as $k\rightarrow \infty$ then
\[\lim_{k\rightarrow\infty}\|\mu^{t_k}-\mu^t\|_{\H}=0.\]
We use this to argue that for all $\eps>0$ there exists $\eps'>0$ so that for sufficiently large $N$
\begin{equation}\label{eq:set_inclusion}
  \bigg\{\sup_{t\in[0,T]} F_N(\ux_N^t,\mu^t)<\eps'\bigg\} \subset \Big\{\mu_N\in B_\eps(\mu)\Big\}.  
\end{equation}
We can then immediately conclude that
\[\lim_{\eps\rightarrow 0}\limsup_{N\rightarrow\infty}\frac{1}{N}\log\P\bigg(\sup_{t\in[0,T]} F_N(\ux_N^t,\mu^t)<\eps\bigg)\leq -\sup_{\phi\in C^\infty} S(\mu,\phi)\]
by Proposition~\ref{prop:upper-bound}.

Suppose~\eqref{eq:set_inclusion} is not true. Then there exists some $\eps>0$, a sequence of particle numbers $N_k$, a sequence of trajectories $\ux_{N_k}$, and a sequence of times $t_k$ so that $N_k\rightarrow\infty$ as $k\rightarrow\infty$, $t_k\rightarrow t$ as $k\rightarrow \infty$, $d(\mu_{N_k}^{t_k},\mu^{t_k})\geq \eps$ for all $k$, and 
\[\lim_{k\rightarrow\infty} F_N(\ux_{N_k}^{t_k},\mu^{t_k})=0.\]
This implies that $d(\mu_{N_k}^{t_k},\mu^t)\rightarrow 0$ as $k\rightarrow\infty$. Indeed, for any $\psi\in C^\infty$ we have that
\[\int \psi\,\diff (\mu_{N_k}^{t_k}-\mu^t)=\int\psi\,\diff (\mu^{t_k}-\mu^t)+\int \psi\,\diff (\mu_{N_k}^{t_k}-\mu^{t_k}). \]
The first term on the right-hand side above goes to zero as $k\rightarrow\infty.$ On the other hand, Lemma~\ref{lem:weak_control} implies that
\[\bigg|\int \psi\,\diff \Big(\frac{1}{N}\sum_{i=1}^N\delta_{x_i}-\mu\Big) \bigg|\leq C\Big(\|\nabla \psi\|_{L^\infty}+\|\psi\|_{\dot{H}^\frac{d-\s}{2}}\Big)\Big(F_N(\ux_{N_k}^{t_k},\mu^{t_k})+C\|\mu\|_{L^\infty}N_k^{-\beta}\Big)^{1/2},\]
thus so does the second term. Accordingly, $\mu_{N_k}^{t_k}$ converges to $\mu^t$ weakly. This creates a contradiction since 
\[d(\mu_{N_k}^{t_k},\mu^t)\geq d(\mu_{N_k}^{t_k},\mu^{t_k})-d(\mu^{t_k},\mu^t)\]
and
\[\liminf_{k\rightarrow\infty}d(\mu_{N_k}^{t_k},\mu^{t_k})-d(\mu^{t_k},\mu^t)\geq \eps.\]
\end{proof}

\section{Mean-field limit}\label{sec:mean_field_limit}\label{sec:LLN} 

In this section, we show that the empirical trajectories associated to the solutions of~\eqref{eq:SDE+b} satisfy a mean-field limit when $b$ and the solution to the McKean--Vlasov equation~\eqref{eq:mve+b} are sufficiently regular. The argument is very similar to as in~\cite{rosenzweig_global--time_2023}, and formally proceeds by applying It\^o's formula to $F_N(\ux_N^t,\mu^t).$ The resulting equality relates the modulated energy to the difference between a martingale and $N$ times the martingale's quadratic variation as well as an integral over time of a commutator term. As Proposition~\ref{prop:commutator-estimate} allows the modulated energy to bound the integrand of this last term, by combining Gronwall's inequality with Lemma~\ref{lemma:exponential-doobs} we achieve exponential bounds on the probability the modulated energy is ever larger than some $\eps>0$.

Here we use that the quadratic variation of the martingale arises naturally in the computation of the It\^o derivative of $F_N(\ux_N^t,\mu^t).$ This term was discarded in~\cite{rosenzweig_global--time_2023}, and its consideration here is crucial for giving probability bounds on the behaviour of the modulated energy over time as opposed to global-in-time bounds on the expectation of the modulated energy.

 As was the case in our \textit{a priori} energy bounds, to make this argument rigorous we need to justify the use of It\^o's formula. To do this we use the same truncated process $\ux_{N,\delta}$ as defined in Proposition~\ref{prop:energy-estimate}, and analogously define the truncated modulated energy 
\[F_{N,\delta}(\ux_N,\mu):=\int_{(\T^d)^2\setminus\Delta}\g_\delta(x-y)\,\diff \bigg(\frac{1}{N}\sum_{i=1}^N\delta_{x_i}-\mu\bigg)^{\otimes 2}(x,y),\]
as well as the truncated kernel
\[\K_{\psi,\delta}(x,y):=(\psi(x)-\psi(y))\cdot\nabla\g_\delta(x-y).\]

The following lemma is the consequence of applying Ito's formula to $F_{N,\delta}(\ux_{N,\delta}^t,\mu^t)$ and rearranging appropriately.

\begin{lemma}\label{lem:modulated-energy-Ito} Let $b\in L^2([0,T],C^1(\T^d))$, $\mu\in \Cs^T\cap L^\infty([0,T],L^\infty(\T^d))$ be a weak solution to \eqref{eq:mve+b}, and $\ux_{N,\delta}$ be the solution to~\eqref{eq:truncated-SDE}. Then
\begin{align*}
& F_{N,\delta}(\ux_{N,\delta}^t,\mu^t)-F_{N,\delta}(\ux_{N,\delta}^0,\mu^0)
\\\notag&\quad=-\frac{2}{N}\sum_{i=1}^N \int_0^t\bigg| \int_{\T^d\setminus \{x_{i,\delta}^\tau\}}\nabla \g_\delta(x_{i,\delta}^\tau-y)\,\diff (\mu_{N,\delta}^\tau-\mu^\tau)(y)\bigg|^2\,\diff \tau
\\&\notag\qquad+\int_0^t \int_{(\T^d)^2\setminus\Delta} \K_{u^\tau_\delta+b^\tau,\delta}(x,y)\,\diff (\mu_{N,\delta}^\tau-\mu^\tau)^{\otimes 2}(x,y)\,\diff \tau
\\&\notag\qquad +2\sigma \int_0^t \int_{(\T^d)^2\setminus \Delta} \Delta \g_\delta(x-y)\,\diff(\mu_{N,\delta}^\tau-\mu^\tau)^{\otimes 2}(x,y)\,\diff \tau
\\&\notag\qquad+\frac{2\sqrt{2\sigma}}{N}\sum_{i=1}^N \int_0^t \int_{\T^d\setminus \{x_{i,\delta}^\tau\}}\nabla \g_\delta(x_{i,\delta}^\tau-y)
\,\diff(\mu_{N,\delta}^\tau-\mu^\tau)(y)\cdot \diff w_i^\tau
\\&\notag\qquad+2\int_0^t \int_{\T^d}\g_\delta*\nabla\cdot\big((u^\tau-u_{\delta}^\tau)\mu^\tau\big)\,\diff (\mu_{N,\delta}^\tau-\mu^{\tau})\,\diff \tau,
\end{align*}
where $\mu_{N,\delta}$ is the empirical trajectory associated to $\ux_{N,\delta}$, $u^t:=-\nabla\g*\mu^t$ and $u_\delta^t:=\nabla\g_\delta*\mu^t.$
\end{lemma}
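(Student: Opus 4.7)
The strategy is to apply It\^o's formula to $f(\ux,t):=F_{N,\delta}(\ux,\mu^t)$, viewed as a time-dependent smooth function of $\ux\in(\T^d)^N$ (since $\g_\delta\in C^\infty(\T^d)$ and $\mu\in L^\infty_tL^\infty_x$). Since $\g_\delta$ is compactly supported away from $0$ in a neighborhood of the origin, we have $\g_\delta(0)=\nabla\g_\delta(0)=\Delta\g_\delta(0)=0$, so the diagonal contributions in the double integrals vanish and we may freely add or remove the diagonal when convenient. Expand
\[
f(\ux,t)=H_{N,\delta}(\ux)-\frac{2}{N}\sum_{i=1}^N \phi^t(x_i)+\int\phi^t\,\diff\mu^t,\qquad \phi^t:=\g_\delta*\mu^t,
\]
and compute the required derivatives. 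A direct calculation gives $\nabla_{x_i}f=\frac{2}{N}D_i^t$ where $D_i^t:=\int_{\T^d\setminus\{x_{i,\delta}^t\}}\nabla\g_\delta(x_{i,\delta}^t-y)\,\diff(\mu_{N,\delta}^t-\mu^t)(y)$, and $\Delta_{x_i}f=\frac{2}{N^2}\sum_{j\neq i}\Delta\g_\delta(x_i-x_j)-\frac{2}{N}\Delta\phi^t(x_i)$. For the time derivative, use that $\mu$ is a weak solution to~\eqref{eq:mve+b}, which rewrites as $\partial_t\mu^t=\sigma\Delta\mu^t-\nabla\cdot((u^t+b^t)\mu^t)$; convolving with $\g_\delta$ and integrating by parts yields
\[
\partial_t\phi^t(x)=\sigma\Delta\phi^t(x)-\int\nabla\g_\delta(x-y)\cdot(u^t+b^t)(y)\,\diff\mu^t(y).
\]

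Next, plug $dx_{i,\delta}^t=(-A_i+b^t(x_{i,\delta}^t))\,\diff t+\sqrt{2\sigma}\,\diff w_i^t$ with $A_i=\int\nabla\g_\delta(x_{i,\delta}^t-y)\,\diff\mu_{N,\delta}^t(y)$ into It\^o's formula
\[
\diff f=\partial_t f\,\diff t+\sum_i\nabla_{x_i}f\cdot \diff x_{i,\delta}^t+\sigma\sum_i\Delta_{x_i}f\,\diff t.
\]
The stochastic part is $\frac{2\sqrt{2\sigma}}{N}\sum_iD_i^t\cdot\diff w_i^t$, which matches the martingale term in the lemma. For the drift, group the terms into three buckets: the quadratic bucket, the $\sigma$-Laplacian bucket, and the remaining commutator bucket. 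Using $A_i=D_i^t+u_\delta^t(x_{i,\delta}^t)$, the $\frac{2}{N}\sum_iD_i^t\cdot(-A_i)$ piece collapses to $-\frac{2}{N}\sum_i|D_i^t|^2$ plus a cross term $-\frac{2}{N}\sum_iD_i^t\cdot u_\delta^t(x_{i,\delta}^t)$ that will feed into the commutator; the quadratic piece already matches the first term in the lemma.

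For the $\sigma$-Laplacian bucket, the three contributions $\frac{2\sigma}{N^2}\sum_{i\neq j}\Delta\g_\delta(x_{i,\delta}^t-x_{j,\delta}^t)$, $-\frac{4\sigma}{N}\sum_i\Delta\phi^t(x_{i,\delta}^t)$ (one copy from $\sigma\sum_i\Delta_{x_i}f$ and one from $-\frac{2}{N}\sum_i\partial_t\phi^t(x_{i,\delta}^t)$), and $2\sigma\int\Delta\phi^t\,\diff\mu^t$ (from $\partial_t C^t$ after using $\frac{d}{dt}\int\phi^t\diff\mu^t=2\int\partial_t\phi^t\diff\mu^t$ by symmetry of $\g_\delta$) telescope into $2\sigma\int_{(\T^d)^2\setminus\Delta}\Delta\g_\delta\,\diff(\mu_{N,\delta}^t-\mu^t)^{\otimes 2}$ after observing that the diagonal contributes $0$ since $\Delta\g_\delta(0)=0$. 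This is the third term in the lemma.

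The remaining bucket contains $\frac{2}{N}\sum_iD_i^t\cdot b^t(x_{i,\delta}^t)$ (from the $b$-drift of the particles), the cross term $-\frac{2}{N}\sum_iD_i^t\cdot u_\delta^t(x_{i,\delta}^t)$ identified above, and $2\int \mathcal{I}\,\diff(\mu_{N,\delta}^t-\mu^t)$ where $\mathcal{I}(x):=\int\nabla\g_\delta(x-y)\cdot(u^t+b^t)(y)\,\diff\mu^t(y)$ (arising from the non-$\sigma$ part of $\partial_t f$). The task is to show this equals the commutator $\int\K_{u_\delta^t+b^t,\delta}\,\diff(\mu_{N,\delta}^t-\mu^t)^{\otimes 2}$ plus the residue $2\int\g_\delta*\nabla\cdot((u^t-u_\delta^t)\mu^t)\,\diff(\mu_{N,\delta}^t-\mu^t)$. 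This is done by expanding the commutator using $\int\K_{\psi,\delta}\,\diff(\mu_{N,\delta}^t-\mu^t)^{\otimes 2}=2\int\psi(x)\cdot\nabla\g_\delta(x-y)\diff(\mu_{N,\delta}^t-\mu^t)^{\otimes 2}$ (obtained by swapping $x\leftrightarrow y$ and using the oddness of $\nabla\g_\delta$), splitting $(\mu_{N,\delta}-\mu)^{\otimes 2}$ into its four pieces, and matching each piece against the corresponding piece of the drift using $\int\nabla\g_\delta(x-y)\diff\mu^t(x)=-u_\delta^t(y)$. The residue term appears precisely because the discrete drift involves $\nabla\g_\delta*\mu_N$ while the PDE drift involves $\nabla\g*\mu$, producing the mismatch $u^t-u_\delta^t$ that is isolated as the last term of the lemma.

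The main obstacle is this last step: the bookkeeping is delicate because four separate velocity fields $(b^t,u^t,u_\delta^t,A)$ interact and the signs are controlled by two distinct antisymmetries (evenness of $\g_\delta$ giving oddness of $\nabla\g_\delta$, and swap symmetry of the product measure). Everything else is just careful application of It\^o to a smooth function of the truncated process, for which the hypotheses $b\in L^2_tC^1_x$ and $\mu\in L^\infty_tL^\infty_x$ give all the required regularity.
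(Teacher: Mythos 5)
Your high-level approach — apply It\^o's formula directly to $f(\ux,t)=F_{N,\delta}(\ux,\mu^t)$, split into self-interaction, cross, and $\mu$-only parts, use the PDE to compute $\partial_t\phi$, and then regroup — is the same as the paper's, which cites \cite{rosenzweig_global--time_2023} for the $b=0$ base case and only verifies the extra drift contributions; your version is more self-contained. Your formulas for $\nabla_{x_i}f$, $\Delta_{x_i}f$, and $\partial_t\phi^t$ are correct, and the quadratic, $\sigma$-Laplacian, and martingale buckets close exactly as you describe.

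The gap is in the ``remaining bucket,'' whose matching you defer. You take $A_i=D_i^t+u_\delta^t(x_{i,\delta}^t)$ and $\int\nabla\g_\delta(x-y)\,\diff\mu^t(x)=-u_\delta^t(y)$, both of which come from reading the lemma's stated $u_\delta^t=\nabla\g_\delta*\mu^t$ literally. Under that convention your drift bucket contains $\frac{2}{N}\sum_iD_i^t\cdot(b^t-u_\delta^t)(x_{i,\delta}^t)$, whereas expanding $\K_{u_\delta^t+b^t,\delta}$ over $(\mu_{N,\delta}^t-\mu^t)^{\otimes 2}$ (via the swap symmetry and oddness of $\nabla\g_\delta$, as you outline) yields $\frac{2}{N}\sum_i(u_\delta^t+b^t)(x_{i,\delta}^t)\cdot D_i^t$ plus a cross term against $\mu^t$; the particle sums already disagree by $\frac{4}{N}\sum_i D_i^t\cdot u_\delta^t(x_{i,\delta}^t)$, so the equality you assert cannot hold as written. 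The identity \emph{does} close if one takes $u_\delta^t:=-\nabla\g_\delta*\mu^t$ (the velocity field, consistent with $u^t=-\nabla\g*\mu^t$), in which case $A_i=D_i^t-u_\delta^t(x_{i,\delta}^t)$, the cross term comes out as $+\frac{2}{N}\sum_iD_i^t\cdot u_\delta^t$, and the residue $u^t-u_\delta^t=-(\nabla\g-\nabla\g_\delta)*\mu^t$ correctly isolates the truncation mismatch. This is indeed the convention the paper uses when invoking the lemma in the proof of Proposition~\ref{prop:LLN} (``$u:=-\nabla\g*\mu$ and $u_\delta:=-\nabla\g_\delta*\mu$''), so the statement of the lemma appears to carry a sign typo that you inherited rather than caught. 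Since your proposal relies on the deferred matching going through and it does not under the convention you actually used, this should be flagged and fixed; once the sign of $u_\delta$ is corrected the rest of your argument is sound.
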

\begin{proof}
The proof follows that of~\cite[Lemma 6.1 and Lemma  6.2]{rosenzweig_global--time_2023}. The only difference is the additional terms that appear due to the drift $b$. Splitting
\begin{align*}
F_{N,\delta}(\ux_{N,\delta}^t,\mu^t)&=\frac{1}{N^2}\sum_{1\leq i\neq j\leq N} \g_\delta(x_{i,\delta}^t-x_{j,\delta}^t)-\frac{2}{N}\sum_{i=1}^N\g_\delta*\mu^t (x_{i,\delta}^t)+\int \g_\delta(x-y)\,\diff \mu^t(x)\,\diff \mu^t(y)
\\&=:\text{Term}_1 +\text{Term}_2+\text{Term}_3,
\end{align*}
then the drift $b$ contributes the following additional components to each term in the It\^o/differential expansion of $F_{N,\delta}(\ux_{N,\delta}^t,\mu^t)$
\begin{align*}
    &\text{Term}_1: \frac{1}{N^2}\sum_{1\leq i\neq j\leq N}\int_0^t \nabla\g_\delta(x_{i,\delta}^\tau-x_{j,\delta}^\tau)\cdot (b(x_{i,\delta}^\tau)-b(x_{j,\delta}^\tau))\,\diff \tau,
    \\&\text{Term}_2: -\frac{2}{N}\int_0^t\int\nabla\g_\delta(x_{i,\delta}^\tau-y)\cdot b^\tau(y)\,\diff \mu^\tau(y)\,\diff \tau-\frac{2}{N}\sum_{i=1}^N\int_0^t\int\nabla\g(x_{i,\delta}^\tau-y)\cdot b(x_{i,\delta}^\tau)\,\diff \mu^\tau(y)\,\diff \tau,
    \\&\text{Term}_3: 2\int_0^t\int\nabla\g_\delta*\mu^\tau\cdot b^\tau\,\diff \mu^\tau.
\end{align*}
These are readily rearranged into
\[\int_0^t \int_{(\T^d)^2\setminus\Delta} (b^\tau(x)-b^\tau(y))\cdot \nabla \g_\delta(x-y)\,\diff(\mu_{N,\delta}^\tau-\mu^\tau)^{\otimes 2}(x,y)\,\diff \tau,\]
which completes the claim.
\end{proof}

\begin{proposition}\label{prop:LLN}  Suppose that $\mu\in \Cs^T\cap L^\infty([0,T],L^\infty(\T^d))$ is a weak solution to~\eqref{eq:mve+b} with drift $b$ satisfying
 \[
 B_b:=\int_0^T \|b^t\|_{C^1(\T^d)}^2+\big\||\nabla|^{\frac{d-\s}{2}}b^t\big\|_{L^{\frac{2d}{d-2-\s}}(\T^d)}^2\,\diff t<\infty
 \]
 and $\ux_N$ is the solution to~\eqref{eq:SDE+b}. Then there exists $\beta>0$ and $C(T,\sigma,B_b,\|\mu\|_{L^\infty([0,T],L^\infty(\T^d))})>0$ so that for all $\eps>0$
\[\P\bigg(\sup_{t\in[0,T]}F_N(\ux_N^t,\mu^t)>\eps\bigg)\leq \exp\Big(-N\big(C^{-1}\eps-F_N(\ux_N^0,\gamma)-CN^{-\beta}\big)\Big).\]
\end{proposition}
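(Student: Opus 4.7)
My plan is to apply It\^o's formula as stated in Lemma~\ref{lem:modulated-energy-Ito} to the truncated modulated energy $F_{N,\delta}(\ux_{N,\delta}^t,\mu^t)$ for the truncated particle system of Proposition~\ref{prop:energy-estimate}, bound each of the five resulting terms, combine the deterministic estimates with a pathwise Gr\"onwall inequality and the exponential Doob inequality (Lemma~\ref{lemma:exponential-doobs}) applied to the martingale term, and finally send $\delta\to 0$ as in the proof of Proposition~\ref{prop:energy-estimate}.

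For the commutator term I would invoke Proposition~\ref{prop:renormalized_commutator_estimate} (adapted to the truncated kernel) with $\psi=u^\tau_\delta+b^\tau$; using that $\mu\in L^\infty([0,T],L^\infty(\T^d))$ ensures $u^\tau=-\nabla\g*\mu^\tau$ is regular enough that, combined with $B_b$, the quantity $\int_0^T A_{u^\tau_\delta+b^\tau}\,\diff\tau$ is bounded uniformly in $\delta$, which yields
\[\bigg|\int_0^t\int_{(\T^d)^2\setminus\Delta}\K_{u^\tau_\delta+b^\tau,\delta}(x,y)\,\diff(\mu_{N,\delta}^\tau-\mu^\tau)^{\otimes 2}(x,y)\,\diff\tau\bigg|\leq C\int_0^t A_{u^\tau_\delta+b^\tau}\Big(F_{N,\delta}(\ux_{N,\delta}^\tau,\mu^\tau)+CN^{-\beta}\Big)\,\diff\tau.\]
The It\^o-correction term $2\sigma\int\Delta\g_\delta\,\diff(\mu_{N,\delta}^\tau-\mu^\tau)^{\otimes 2}\,\diff\tau$ is where the restriction $\s<d-2$ is used crucially: since $-\Delta\g$ agrees with the Riesz kernel of parameter $\s+2<d$ up to the smooth correction in~\eqref{eq:periodic-correction}, this kernel remains positive semi-definite, so after renormalizing the diagonal contribution in the spirit of Lemma~\ref{lem:renormalized_Holder} the term is bounded from above by $CN^{-\beta}$ plus an integrable multiple of $F_{N,\delta}$. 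Finally, the error term involving $u^\tau-u_\delta^\tau$ is $o_\delta(1)$ since $\g=\g_\delta$ outside $B_\delta(0)$.

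For the friction and stochastic terms I would exploit the structural identity $\text{friction}=-\tfrac{N}{4\sigma}\langle M\rangle^t$ forced by the formula for $\langle M\rangle^t$: multiplying the It\^o identity by $\tfrac{N}{2\sigma}$ aligns the friction exactly with the Doob exponent, because then $\exp\big(\tfrac{N}{2\sigma}M^t-\tfrac{N^2}{8\sigma^2}\langle M\rangle^t\big)$ is a positive martingale with expectation one and Lemma~\ref{lemma:exponential-doobs} yields
\[\sup_{t\in[0,T]}\Big(\tfrac{N}{2\sigma}M^t-\tfrac{N^2}{8\sigma^2}\langle M\rangle^t\Big)\leq L\quad\text{with probability }\geq 1-e^{-L}.\]
On this event, combining the previous three bounds with a pathwise Gr\"onwall inequality in $t$ produces
\[\sup_{t\in[0,T]}F_{N,\delta}(\ux_{N,\delta}^t,\mu^t)\leq C\Big(F_{N,\delta}(\ux_{N,\delta}^0,\mu^0)+\tfrac{2\sigma L}{N}+CN^{-\beta}+o_\delta(1)\Big),\]
with $C=C(T,\sigma,B_b,\|\mu\|_{L^\infty([0,T],L^\infty(\T^d))})$. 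Choosing $L:=N\big(C^{-1}\eps-F_N(\ux_N^0,\mu^0)-CN^{-\beta}\big)$ and letting $\delta\to 0$, using that $F_{N,\delta}(\ux_{N,\delta}^0,\mu^0)\to F_N(\ux_N^0,\mu^0)$ and $\ux_{N,\delta}\to\ux_N$ almost surely (as in Proposition~\ref{prop:energy-estimate}), then gives the claimed probability bound.

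The hard part will be the careful renormalization of the $\Delta\g_\delta$ It\^o-correction term uniformly in $\delta$: the diagonal singularity of $(-\Delta)\g$ is strictly worse than that of $\g$, and extracting an $O(N^{-\beta})$ control there is the technical heart of the argument, relying on $\s+2<d$ together with a Lemma~\ref{lem:renormalized_Holder}-style renormalization for the Riesz kernel of parameter $\s+2$. Once that term and the commutator are reduced to multiples of $F_{N,\delta}$, the combined pathwise Gr\"onwall plus exponential Doob step is a clean application of the tools established earlier in the paper.
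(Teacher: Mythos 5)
Your proposal is correct and takes essentially the same route as the paper: apply It\^o's formula via Lemma~\ref{lem:modulated-energy-Ito} to the truncated modulated energy, use the exact matching $\text{friction}=-\tfrac{N}{4\sigma}\langle M\rangle^t$ together with Lemma~\ref{lemma:exponential-doobs}, control the commutator term by Proposition~\ref{prop:renormalized_commutator_estimate}, use that $-\Delta\g$ is a Riesz kernel of parameter $\s+2<d$ to get the $O(N^{-\beta})$ bound on the It\^o-correction (equation~\eqref{eq:positivity_2}), and finish with Gr\"onwall and the $\delta\to 0$ limit; the paper packages the non-martingale terms into $\mathcal F_{N,\delta}^t$ before applying Doob whereas you apply Doob to $M-\tfrac N{4\sigma}\langle M\rangle$ directly, but these are the same estimate. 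The only imprecision worth flagging is that your pathwise Gr\"onwall needs the positivity bound $|F_N|\le F_N+C\|\mu\|_{L^\infty}N^{-\beta}$ (the paper's~\eqref{eq:positivity_1}) to turn the commutator bound into a one-sided integral inequality in $F_{N,\delta}$, and the It\^o-correction is controlled purely by $CN^{-\beta}$ --- no $F_{N,\delta}$ contribution is needed there --- but these are cosmetic gaps in a sketch rather than errors.
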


\begin{remark} It is an immediate consequence of Proposition~\ref{prop:LLN} that if $\lim_{N\rightarrow \infty}F_N(\ux_N^0,\gamma)=0$ then
\[\lim_{N\rightarrow\infty} \sup_{t\in[0,T]}F_N(\ux_N^t,\mu^t)=0\]
almost surely. That is to say, a strong pathwise law of large numbers holds for $\mu_N$ with respect to the modulated energy.

\end{remark}

\begin{proof} 
We let
\begin{align*} 
\mathcal{F}_{N,\delta}^t:&=F_{N,\delta}(\ux_{N,\delta}^t,\mu^t)-2\sigma \int_0^t \int_{(\T^d)^2\setminus \Delta} \Delta \g_\delta(x-y)\,\diff(\mu_{N,\delta}^\tau-\mu^\tau)^{\otimes 2}(x,y)\,\diff \tau
\\&\quad-\int_0^t \int_{(\T^d)^2\setminus\Delta} \K_{u^\tau_\delta+b^\tau,\delta}(x,y)\,\diff (\mu_{N,\delta}^\tau-\mu^\tau)^{\otimes 2}(x,y)\,\diff \tau
\\&\quad-2\int_0^t \int_{\T^d}\g_\delta*\nabla\cdot\big((u^\tau-u_{\delta}^\tau)\mu^\tau\big)\,\diff (\mu_{N,\delta}^\tau-\mu^{\tau})\,\diff \tau.
\end{align*}
Then
\[M^t:=\frac{2\sqrt{2\sigma}}{N}\sum_{i=1}^N \int_0^t \int_{\T^d\setminus \{x_{i,\delta}^\tau\}}\nabla \g_\delta(x_{i,\delta}^\tau-y)\,\diff (\mu_{N,\delta}^\tau-\mu^\tau)(y)\cdot \diff w_i^\tau,
\]
is a continuous martingale with respect to the filtration generated by the noise with bounded quadratic variation equal to
\[
\langle M\rangle^t =\frac{8\sigma}{N^2}\sum_{i=1}^N \int_0^t\bigg| \int_{\T^d\setminus \{x_{i,\delta}^\tau\}}\nabla \g_\delta(x_{i,\delta}^\tau-y)\,\diff (\mu_{N,\delta}^\tau-\mu^\tau)(y)\bigg|^2\,\diff \tau,
\]
and Lemma~\ref{lem:modulated-energy-Ito} reads
\[\mathcal{F}_{N,\delta}^t-F_{N,\delta}(\ux_{N,\delta}^0,\gamma)= M^t-\frac{N}{4\sigma}\langle M\rangle^t.\]
Using that
 \[\exp\bigg(\frac{N}{2\sigma} M^t-\frac{1}{2}\Big(\frac{N}{2\sigma}\Big)^2\langle M\rangle^t\bigg)\]
is then a continuous martingale with constant expectation equal to 1, Lemma~\ref{lemma:exponential-doobs} implies that 
\begin{equation}\label{eq:molexpdoobbound}
\P\Big(\sup_{t\in[0,T]}\mathcal{F}_{N,\delta}^t\geq\eps\Big)\leq\exp\Big(-\frac{N}{2\sigma}\big(\eps-F_{N,\delta}(\ux_N^0,\mu^0)\big) \Big).
\end{equation}

We claim that
\begin{equation}\label{eq:initial_modulated_convergence}
\lim_{\delta\rightarrow0}F_{N,\delta}(\ux_N^0,\gamma)=F_N(\ux_N^0,\gamma)
\end{equation}
and
\begin{equation}\label{eq:truncated-path-convergence}
\lim_{\delta\rightarrow0}\sup_{t\in[0,T]}\mathcal{F}_{N,\delta}^t=\sup_{t\in[0,T]}\mathcal{F}_N^t\quad\text{almost surely,}
\end{equation}
where
\begin{align*}
    \mathcal{F}_{N}^t&:=F_N(\ux_N^t,\mu^t)-2\sigma \int_0^t \int_{(\T^d)^2\setminus \Delta} \Delta \g(x-y)\,\diff(\mu_N^\tau-\mu^\tau)^{\otimes 2}(x,y)\,\diff \tau
    \\&\quad-\int_0^t \int_{(\T^d)^2\setminus\Delta} \K_{u^\tau+b^\tau}(x,y)\,\diff (\mu_N^\tau-\mu^\tau)^{\otimes 2}(x,y)\,\diff \tau.
\end{align*}
Once combined with~\eqref{eq:molexpdoobbound}, these claims directly imply that
\begin{equation}\label{eq:exp_doob_bound}
\P\Big(\sup_{t\in[0,T]}\mathcal{F}_N^t\geq\eps\Big)\leq\exp\Big(-\frac{N}{2\sigma}\big(\eps-F_N(\ux_N^0,\mu^0)\big) \Big).
\end{equation}
We first show how the proposition follows from~\eqref{eq:exp_doob_bound}.

We note that
\[\|\nabla u^t\|_{L^\infty}+\big\||\nabla|^{\frac{d-\s}{2}}u^t\big\|_{L^{\frac{2d}{d-\s-2}}}\leq C \|\mu^t\|_{L^\infty},\]
 where we bound the first term using Young's inequality since $\nabla^2\g\in L^1$ and bound the second term using Fourier multipliers. We thus have that
\begin{equation}\label{eq:constant_bound}
\int_0^T A_{u^t+b^t}\,\diff t=\int_0^T\|\nabla(u^t+b^t)\|_{L^\infty}+\big\||\nabla|^{\frac{d-\s}{2}}(u^t+b^t)\big\|_{L^{\frac{2d}{d-2-\s}}}\,\diff t\leq B_b+T\|\mu\|_{L^\infty},    
\end{equation}
where for convenience we set $\|\mu\|_{L^\infty}:= \|\mu\|_{L^\infty([0,T],L^\infty(\T^d))}$. 
The lower bound~\eqref{eq:modulated_energy_positivity} implies that
\begin{equation}\label{eq:positivity_1}
 F_N(\ux_N^t,\mu^t)\geq |F_N(\ux_N^t,\mu^t)|-C\|\mu^t\|_{L^\infty}N^{-\beta}. 
\end{equation}
It also implies that when $\s<d-4$
\begin{equation}\label{eq:positivity_2}-2\sigma \int_0^t \int_{(\T^d)^2\setminus \Delta} \Delta \g(x-y)\,\diff(\mu_N^\tau-\mu^\tau)^{\otimes 2}(x,y)\,\diff \tau\geq -CT\sigma\|\mu\|_{L^\infty} N^{-\beta},
\end{equation}
since $(-\Delta\g)$ is a scalar multiple of the periodic Riesz potential corresponding to parameter $\s+2<d-2$. When $d-4\leq \s<d-2$~\eqref{eq:positivity_2} also holds by~\cite[Proposition 5.6]{de_courcel_sharp_2023}.
Finally,~\eqref{eq:old-commutator-estimate} in Proposition~\ref{prop:renormalized_commutator_estimate} gives the lower bound
\[\int_0^t \int_{(\T^d)^2\setminus\Delta} \K_{u^\tau+b 
^\tau}(x,y)\,\diff (\mu_N^\tau-\mu^\tau)^{\otimes 2}(x,y)\,\diff \tau\geq -C\int_0^t A_{u^\tau+b^\tau}\Big(|F_N(\ux_N^\tau,\mu^\tau)|+\|\mu^\tau\|_{L^\infty}N^{-\beta}\Big)\,\diff \tau.
\]
With~\eqref{eq:constant_bound},~\eqref{eq:positivity_1} and~\eqref{eq:positivity_2} this implies that in total
\begin{equation}\label{eq:modulated-absolute}
    |F_N(\ux_N^t,\mu^t)|-C\int_0^t A_{u^\tau+b^\tau}|F_N(\ux_N^t,\mu^t)|\,\diff t\leq \mathcal{F}_{N}^t+C\|\mu\|_{L^\infty}\big(B_b+1+T(\sigma+\|\mu\|_{L^\infty})\big)N^{-\beta}.
\end{equation}
 Gr\"onwall's inequality implies that if
 \[
 \sup_{t\in[0,T]} |F_N(\ux_N^t,\mu^t)|- C\int_0^t A_{u^\tau+b^\tau} |F_N(\ux_N^\tau,\mu^\tau)|\,\diff \tau\leq \eps e^{-C(B_b+T\|\mu\|_{L^\infty})}\Rightarrow \sup_{t\in[0,T]}  |F_N(\ux_N^t,\mu^t)|\leq \eps.
 \]
The contrapositive of this with~\eqref{eq:modulated-absolute} imply that
\[\sup_{t\in[0,T]}  |F_N(\ux_N^t,\mu^t)|>\eps\Rightarrow \sup_{t\in[0,T]}\mathcal{F}_N^t>\eps e^{-C(B_b+T\|\mu\|_{L^\infty})}-\|\mu\|_{L^\infty}\big(B_b+1+T(\sigma+\|\mu\|_{L^\infty})\big)N^{-\beta}.\]
Combining this with~\eqref{eq:exp_doob_bound} we see the proposition holds.

To conclude, we must thus show that~\eqref{eq:initial_modulated_convergence} and~\eqref{eq:truncated-path-convergence} are true. Following the computations in~\cite[Proposition 6.3]{rosenzweig_global--time_2023}\footnote{Replacing the bound $\|\nabla\g*\mu\|_{L^\infty(\R^d)}\leq C\|\mu\|_{L^\infty(\R^d)}^{\frac{\s+1}{d}}$ by $\|\nabla\g*\mu\|_{L^\infty(\T^d)}\leq C\|\mu\|_{L^\infty(\T^d)}$, assumption (v) by $|x||\nabla\g(x)|\leq C(\g(x)+C),$ and $u$ by $u+b$ for~\eqref{eq:trunc_path_bound_4}.} we see that for any $\ux_N\in(\T^d)^N$, $\mu\in \Pc(\T^d)\cap L^\infty(\T^d)$, and bounded vector field $b$ the following inequalities hold
\begin{equation}\label{eq:trunc_path_bound_1}
|F_{N,\delta}(\ux_N,\mu)-F_N(\ux_N,\mu)\Big|\leq C\|\mu\|_{L^\infty}\delta^{d-\s},
\end{equation}
\begin{equation}\label{eq:trunc_path_bound_2}
\bigg|\int_{(\T^d)^2\setminus \Delta} \Delta (\g_\delta-\g)(x-y)\,\diff (\mu_N-\mu)^{\otimes 2}(x,y)\bigg|\leq C\|\mu\|_{L^\infty}\delta^{d-\s-2},
\end{equation}
\begin{equation}\label{eq:trunc_path_bound_3}
\bigg|\int \g_\delta*\nabla\cdot\big((u-u_{\delta})\mu\big)(x)\,\diff (\mu_N-\mu)(x)\bigg|\leq C\|\mu\|_{L^\infty}^2\delta^{d-1-\s},
\end{equation}
and
\begin{align}\label{eq:trunc_path_bound_4}
&\bigg|\int_{(\T^d)^2\setminus\Delta} \Big(\K_{u_\delta+b,\delta}-\K_{u+b}\Big)(x,y)\,\diff (\mu_N^\tau-\mu^\tau)^{\otimes 2}(x,y)\bigg|
\\\notag&\quad\leq  C\|\mu\|_{L^\infty}\Big((H_N(\ux_N)+C)\delta^{d-2-\s}+(\|\mu\|_{L^\infty}+\|b\|_{L^\infty})\delta^{d-1-\s}\Big),
\end{align}
where $u:=-\nabla\g*\mu$ and $u_{\delta}:=-\nabla\g_\delta*\mu$. The inequality~\eqref{eq:trunc_path_bound_1} directly implies~\eqref{eq:initial_modulated_convergence}. As in Proposition~\ref{prop:energy-estimate}, there exists a family of stopping times $\tau_{N,\delta}$ so that $\tau_{N,\delta}\rightarrow \infty$ as $\delta\rightarrow 0$ almost surely and $\ux_{N,\delta}^t=\ux_N^t$ whenever $t<\tau_{\delta}.$ The bounds~\eqref{eq:trunc_path_bound_1}-\eqref{eq:trunc_path_bound_4} thus imply that when $T<\tau_{N,\delta}$
\[\sup_{t\in[0,T]}|\mathcal{F}_{N,\delta}^t-\mathcal{F}_N^t|\leq C\|\mu\|_{L^\infty}\Big(\delta^{d-\s}+T(\|\mu\|_{L^\infty}+\|b\|_{L^\infty})\delta^{d-1-\s}+(\sigma+\sup_{t\in[0,T]}H_N(\ux_N^t)+1)T\delta^{d-2-\s}\Big).\]
As $\sup_{t\in[0,T]}H_N(\ux_N)<\infty$ almost surely by Proposition~\ref{prop:energy-estimate}, we see that~\eqref{eq:truncated-path-convergence} holds.
\end{proof}

\section{Lower bounds}\label{sec:lower_bounds}

We now use the mean-field limit from Section~\ref{sec:LLN} to prove the LDP lower bound. First, we use a tilting argument to show that if $\mu$ and $b$ satisfy the regularity assumptions of Proposition~\ref{prop:LLN} then a large deviations lower bound holds for the probability that the modulated energy between the the measure trajectory and the particle trajectories is small. We then construct a good family of approximations to recover this lower bound for less regular measure trajectories which we use to show Item~\ref{item:LDP-lower-bound} in Theorem~\ref{thm:LDP}. Finally, we show the second inequality in Theorem~\ref{thm:local-LDP} by showing that the approximating sequences converge in a stronger topology than $\Cs^T.$

\subsection{Local lower bound for regular trajectories}

Here we show that a local lower bound holds when $\mu$ is a sufficiently regular weak solution to~\eqref{eq:mve+b}. The proof is very similar to~\cite[Proposition 2.10]{chen_sample-path_2022}, although here we give estimates on the set where the modulated energy between $\ux_N$ and $\mu$ is small as opposed to just the $C([0,T],\Pc(\T^d))$ distance between $\mu_N$ and $\mu$.

\begin{proposition}\label{prop:lower-bound} Suppose that $\ux_N$ solves \eqref{eq:SDE} with initial conditions satisfying Assumption~\ref{cond:initial-convergence} and $\mu$ is a weak solution to~\eqref{eq:mve+b} that satisfies the conditions of Proposition~\ref{prop:LLN} with $\mu^0=\gamma$. Then
\[\lim_{\eps\rightarrow0}\liminf_{N\rightarrow\infty}\frac{1}{N}\log\P\bigg(\sup_{t\in[0,T]}F_N(\ux_N^t,\mu^t)<\eps\bigg)\geq -\frac{1}{4\sigma}\int_0^T \int_{\T^d} |b^t(x)|^2\,\diff \mu^t(x)\,\diff t.\]
\end{proposition}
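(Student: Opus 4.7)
The plan is to use a Girsanov tilting argument and invoke the pathwise mean-field limit for the perturbed system (Proposition~\ref{prop:LLN}). Let $\P$ denote the law of the unperturbed system~\eqref{eq:SDE} and $\tilde\P$ the law of the perturbed system~\eqref{eq:SDE+b}; since $b\in L^2([0,T],C^1(\T^d))$ gives $\int_0^T\|b^t\|_{L^\infty}^2\,\diff t<\infty$, Novikov's condition is satisfied and the Radon--Nikodym derivative $Z_N:=d\tilde\P/d\P$ is well-defined and given by~\eqref{eq:girsanov_transform}, and under $\tilde\P$ the processes $\tilde w_i^t := w_i^t-(2\sigma)^{-1/2}\int_0^t b^\tau(x_i^\tau)\,\diff\tau$ are independent Brownian motions. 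Setting $A_\eps := \{\sup_{t\in[0,T]}F_N(\ux_N^t,\mu^t)<\eps\}$, I would start from
\[
\P(A_\eps) \;=\; \tilde\P(A_\eps)\,\tilde\E\bigl[Z_N^{-1}\bigm|A_\eps\bigr]
\]
and apply Jensen's inequality to the concave function $\log$ to obtain
\[
\tfrac{1}{N}\log\P(A_\eps) \;\geq\; \tfrac{1}{N}\log\tilde\P(A_\eps) \;-\; \tfrac{1}{N}\tilde\E\bigl[\log Z_N\bigm|A_\eps\bigr].
\]

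Since $\mu^0=\gamma$ and $\mu$ satisfies the hypotheses of Proposition~\ref{prop:LLN}, Assumption~\ref{cond:initial-convergence} combined with Proposition~\ref{prop:LLN} applied under $\tilde\P$ yields $\tilde\P(A_\eps^c)\leq\exp(-Nc_\eps)$ for $N$ large and every fixed $\eps>0$, so the first term on the right tends to zero. For the second, substituting $\diff w_i = \diff\tilde w_i + (2\sigma)^{-1/2}b\,\diff t$ into~\eqref{eq:girsanov_transform} gives
\[
\log Z_N \;=\; \tfrac{1}{\sqrt{2\sigma}}\sum_{i=1}^N\int_0^T b^t(x_i^t)\cdot\diff\tilde w_i^t \;+\; \tfrac{1}{4\sigma}\sum_{i=1}^N\int_0^T|b^t(x_i^t)|^2\,\diff t.
\]
The first piece is a mean-zero $\tilde\P$-martingale whose expected square is bounded by $\frac{N}{2\sigma}\int_0^T\|b^t\|_{L^\infty}^2\,\diff t$, so Cauchy--Schwarz together with $\tilde\P(A_\eps)\to 1$ controls its conditional expectation by $O(\sqrt N)$, contributing $O(N^{-1/2})$ after dividing by $N$. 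The second piece divided by $N$ equals $\tfrac{1}{4\sigma}\int_0^T\int_{\T^d}|b^t|^2\,\diff\mu_N^t\,\diff t$, which is deterministically bounded by $\frac{1}{4\sigma}\int_0^T\|b^t\|_{L^\infty}^2\,\diff t$; Proposition~\ref{prop:LLN} and Lemma~\ref{lem:weak_control} yield $\sup_{t\in[0,T]}d(\mu_N^t,\mu^t)\to 0$ in $\tilde\P$-probability, so by bounded convergence (and weak convergence applied to $|b^t|^2$, which is Lipschitz for a.e.\ $t$) both the unconditional and conditional expectations tend to $\tfrac{1}{4\sigma}\int_0^T\int|b^t|^2\,\diff\mu^t\,\diff t$.

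The main technical point to verify is that conditioning on $A_\eps$ does not shift the relevant expectations in a problematic way. This is routine because $\tilde\P(A_\eps^c)\to 0$ exponentially: for a random variable $Y_N$ that is deterministically bounded (as for the energy term) one has $|\tilde\E[Y_N\mid A_\eps] - \tilde\E[Y_N]|=O(\tilde\P(A_\eps^c))=o(1)$, while for the martingale term, Cauchy--Schwarz in $L^2(\tilde\P)$ gives $|\tilde\E[M_N\mid A_\eps]|\leq\|M_N\|_{L^2(\tilde\P)}/\tilde\P(A_\eps)=O(\sqrt N)$ since $\tilde\E[M_N^2]=O(N)$. Combining these observations gives $\liminf_{N\to\infty}\frac{1}{N}\log\P(A_\eps)\geq -\frac{1}{4\sigma}\int_0^T\int|b^t|^2\,\diff\mu^t\,\diff t$ for each fixed $\eps>0$; since the right-hand side is independent of $\eps$, the outer limit $\eps\to 0$ is immediate.
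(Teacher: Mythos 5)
Your proposal is correct and follows essentially the same route as the paper: Girsanov tilting, Jensen's inequality applied to $\log$, the pathwise mean-field limit (Proposition~\ref{prop:LLN}) under the tilted measure, and control of the two pieces of $\log Z_N$ (the $L^2$-martingale term via It\^o isometry and Cauchy--Schwarz, and the drift term via uniform weak convergence of $\mu_N$ to $\mu$). You write out the substitution $\diff w_i=\diff\tilde w_i+(2\sigma)^{-1/2}b\,\diff t$ explicitly and track signs with respect to the $\P_b$-Brownian motions, whereas the paper states the formula for $\diff\P/\diff\P_b$ with an inverted sign convention that it does not spell out (it does not affect the end result since the martingale term vanishes in absolute value anyway); your version is cleaner on this point. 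You also handle the conditioning on the event $A_\eps$ directly via $\tilde\E[\,\cdot\mid A_\eps\,]$ and the exponential bound on $\tilde\P(A_\eps^c)$, while the paper writes the same thing as $\E_b[\,\cdot\,;\,A_{N,\eps}]/\P_b(A_{N,\eps})$; this is a purely presentational difference.
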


\begin{proof} We use the change of measure
\[\frac{\diff\P}{\diff\P_b}:=\exp\Bigg(\frac{1}{\sqrt{2\sigma}}\sum_{i=1}^N\int_0^T b^t(x_i^t)\cdot \diff w_i^t-\frac{1}{4\sigma}\sum_{i=1}^N\int_0^T |b^t(x_i^t)|^2\,\diff t\Bigg).\]
Our conditions on $b$ ensure that $b(x_i^t)$ satisfy the Novikov condition, thus we can use the Girsanov theorem to see that $\ux_N^t$ is a solution to~\eqref{eq:SDE+b} under $\P_b$~\cite[Section 3.5]{karatzas_brownian_1998}.

Letting $A_{N,\eps}:=\{ \sup_{t\in[0,T]}F_N(\ux_N,\mu)<\eps\}$ we have that 
\[
\P\bigg(\sup_{t\in[0,T]}F_N(\ux_N^t,\mu^t)<\eps\bigg)= \E_b\bigg[\indc_{A_{N,\eps}}\frac{\diff\P}{\diff\P_b}\bigg]=\P_b( A_{N,\eps})\E_b\bigg[\frac{\indc_{A_{N,\eps}}}{\P_b(A_{N,\eps})}\frac{\diff\P}{\diff\P_b}\bigg].
\]
Jensen's inequality thus implies that
\begin{equation}\label{eq:jensen}
\frac{1}{N}\log\P\bigg(\sup_{t\in[0,T]}F_N(\ux_N^t,\mu^t)<\eps\bigg)\geq \frac{1}{N}\log\P_b(A_{N,\eps})+\frac{1}{\P_b(A_{N,\eps})}\E_b\bigg[\frac{1}{N}\log \frac{\diff\P}{\diff\P_b} \indc_{A_{N,\eps}}\bigg].
\end{equation}
Proposition~\ref{prop:LLN} implies that $\lim_{N\rightarrow\infty}\P_b(A_{N,\eps})=1$, thus the first term on the right-hand side of \eqref{eq:jensen} converges to 0. To conclude it thus suffices to show that
\begin{equation}\label{eq:intermediate-lower-bound}
    \lim_{\eps\rightarrow0}\liminf_{N\rightarrow\infty}\E_b\bigg[\frac{1}{N}\log \frac{\diff\P}{\diff\P_b} \indc_{A_{N,\eps}}\bigg]\geq -\frac{1}{4\sigma}\int_0^T\int |b^t|^2\,\diff \mu^t\,\diff t.
\end{equation}

Using the definition of $\frac{\diff\P}{\diff\P_b}$ and $A_{N,\eps}$ we have that
\begin{align*}
\E_b\bigg[\frac{1}{N}\log \frac{\diff\P}{\diff\P_b} \indc_{A_{N,\eps}}\bigg]&=\E_b\bigg[\frac{1}{N\sqrt{2\sigma}}\sum_{i=1}^N\int_0^T b^t(x_i^t)\cdot \diff w_i^t\ ; \sup_{t\in[0,T]}F_N(\ux_N^t,\mu^t)<\eps\bigg]
\\&\quad-\E_b\bigg[\frac{1}{4\sigma}\int_0^T \int|b^t(x)|^2\,\diff \mu_N(x) \,\diff t\ ; \sup_{t\in[0,T]}F_N(\ux_N^t,\mu^t)<\eps\bigg].
\end{align*}
H\"older's inequality with the It\^o isometry imply that 
\[\E_b\bigg[\frac{1}{N\sqrt{2\sigma}}\sum_{i=1}^N\int_0^T b^t(x_i^t)\cdot \diff w_i^t\ ; \sup_{t\in[0,T]}F(\ux_N^t,\mu^t)<\eps\bigg]\leq\frac{1}{\sqrt{2N\sigma}}\Bigg(\int_0^T \|b^t\|_{L^\infty}^2 \,\diff t\Bigg)^{1/2},\]
the right-hand side of which goes to 0 as $N\rightarrow\infty$. On the other hand
\begin{align*}
&\Bigg|\E_b\Bigg[\frac{1}{4\sigma}\int_0^T \int|b^t(x)|^2\,\diff \mu_N^t(x) \,\diff t\ ; \sup_{t\in[0,T]}F(\ux_N^t,\mu^t)<\eps\Bigg]-\frac{1}{4\sigma}\int_0^T \int |b^t|^2\,\diff \mu^t\,\diff t\Bigg|
\\&\leq \E_b\Bigg[\frac{1}{4\sigma}\int_0^T\Bigg| \int|b^t(x)|^2\,\diff (\mu_N^t-\mu^t)(x)\Bigg|\,\diff t\ ; \sup_{t\in[0,T]}F(\ux_N^t,\mu^t)<\eps\Bigg]
\\&\qquad +\P_b\bigg(\sup_{t\in[0,T]}F(\ux_N^t,\mu^t)\geq \eps\bigg)\frac{1}{4\sigma}\int_0^T \int_{\T^d} |b^t|^2\,\diff \mu^t\,\diff t.
\end{align*}
The second term on the right-hand side above goes to zero as $N\rightarrow\infty$ again using Proposition~\ref{prop:LLN}. Lemma~\ref{lem:weak_control} implies that
\begin{align*}
&\E_b\Bigg[\frac{1}{4\sigma}\int_0^T\Bigg| \int|b^t(x)|^2d(\mu_N^t-\mu^t)(x)\Bigg|\,\diff t\ ; \sup_{t\in[0,T]}F(\ux_N^t,\mu^t)<\eps\Bigg]
\\&\quad\leq \frac{C}{4\sigma} \int_0^T\big(\|\nabla |b^t|^2\|_{L^\infty}+\||b^t|^2\|_{\dot H^{\frac{d-\s}{2}}}\big)\big(\eps+C\|\mu\|_{L^\infty}N^{-\beta}\big)^{1/2}\,\diff t.
\end{align*}
Clearly $\|\nabla |b^t|^2\|_{L^\infty}\leq \|b^t\|_{C^1}^2$ while the fractional Leibniz rule~\cite[Theorem 7.6.1]{grafakos_modern_2014}\footnote{The estimates are stated for $\R^d$, but they carry over to mean zero functions on $\T^d$ as well.} implies that 
\[
\||b^t|^2\|_{\dot H^{\frac{d-\s}{2}}}\leq \| b^t\|_{C^1(\T^d)} \| b^t\|_{\dot H^{\frac{d-\s}{2}}}.
\]
As $\frac{2d}{d-2-\s}>2$ we can further bound $\| b^t\|_{\dot H^{\frac{d-\s}{2}}}\leq \big\||\nabla|^{\frac{d-\s}{2}}b^t\big\|_{L^{\frac{2d}{d-2-\s}}}$. Using our conditions on $b$ and taking $N\rightarrow\infty$ and then $\eps\rightarrow0$ this proves~\eqref{eq:intermediate-lower-bound}, and thus the claimed lower bound.
\end{proof}

\subsection{Regular approximations}

To show that the rate function is well-behaved relative to our approximating sequence we need to make sense of
\[\int_0^T \int|\nabla\g*\mu^t|^2\,\diff\mu^t\,\diff t.\]
Although this is not necessarily a well-defined integral, when $\mu\in\A$ we can give meaning to it as a product of Sobolev distributions. We use the following proposition to do this.

\begin{proposition}\label{prop:alternate_commutator_estimate} There exists $C>0$ so that for any $f,g,h\in C^\infty(\T^d)$ it holds that
\begin{align}\label{eq:cubic-form-bound}
&\Bigg|\int\nabla\g*f(x)\cdot \nabla\g*g(x) h(x)\,\diff x\Bigg|
\\\notag&\quad\leq C\bigg(\big\||\nabla|^{\frac{1}{2}+\frac{\s-d}{2}}h\big\|_{L^{\frac{6d}{3d-\s-1}}(\T^d)}+
\bigg|\int_{\T^d}h(x)\,\diff x\bigg|\bigg)\big\||\nabla|^{\frac{1}{2}+\frac{\s-d}{2}}f\big\|_{L^{\frac{6d}{3d-\s-1}}(\T^d)}\big\||\nabla|^{\frac{1}{2}+\frac{\s-d}{2}}g\big\|_{L^{\frac{6d}{3d-\s-1}}(\T^d)}.
\end{align}
Consequently, the integral on the left-hand side of~\eqref{eq:cubic-form-bound} extends to a trilinear form on $\{\mu\in\Mc(\T^d):\big\||\nabla|^{\frac{1}{2}+\frac{\s-d}{2}}\mu\big\|_{L^{\frac{6d}{3d-\s-1}}(\T^d)}<\infty\}$ satisfying the bound~\eqref{eq:cubic-form-bound}.
\end{proposition}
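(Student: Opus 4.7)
The plan is to exploit the Fourier-side identity that $\nabla\g*f$ is, up to a vector of Riesz transforms, a negative-order fractional Laplacian applied to $f$, and then bound the resulting trilinear pairing by duality and the fractional Leibniz rule. Set $\alpha:=\tfrac{d-\s-1}{2}$, so that $\tfrac{1}{2}+\tfrac{\s-d}{2}=-\alpha$ and $\tfrac{6d}{3d-\s-1}=\tfrac{3d}{d+\alpha}=:p$; the assumption $\s\in[0,d-2)$ gives $\alpha\in(\tfrac12,\tfrac{d-1}{2}]$ and $p\in(2,3)$, so Riesz transforms are bounded on $L^p(\T^d)$ and the Hardy--Littlewood--Sobolev (HLS) inequality is applicable throughout. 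Writing $F:=|\nabla|^{-\alpha}f$ and $G:=|\nabla|^{-\alpha}g$, the Fourier identity $\nabla\g*f=c_\alpha R|\nabla|^{-\alpha}F$, where $R$ is the Riesz-transform vector (and where only the mean-zero part of $f$ contributes, since $\widehat{\nabla\g}(0)=0$), reduces the target to
\[\Bigl|\int_{\T^d}(R|\nabla|^{-\alpha}F)\cdot(R|\nabla|^{-\alpha}G)\,h\,\diff x\Bigr|\le C\Bigl(\||\nabla|^{-\alpha}h\|_{L^p}+\Bigl|\int h\,\diff x\Bigr|\Bigr)\|F\|_{L^p}\|G\|_{L^p}.\]

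Split $h=h_0+\bar h$ with $h_0$ mean-zero and $\bar h:=\int h\,\diff x$, and handle the pieces separately. For the mean-zero contribution, set $u:=\nabla\g*f$, $v:=\nabla\g*g$ and dualize using that $|\nabla|^{-\alpha}$ is self-adjoint on mean-zero functions:
\[\Bigl|\int h_0\,(u\cdot v)\,\diff x\Bigr|\le\||\nabla|^{-\alpha}h_0\|_{L^p}\,\||\nabla|^\alpha(u\cdot v)\|_{L^{p'}}.\]
Then I apply the Kato--Ponce fractional Leibniz inequality of~\cite[Theorem 7.6.1]{grafakos_modern_2014} (adapted to mean-zero functions on $\T^d$) to each component product $u_jv_j$ with exponents $\tfrac{1}{p'}=\tfrac{1}{p}+\tfrac{1}{q}$ and $\tfrac{1}{q}=\tfrac{1}{p}-\tfrac{\alpha}{d}$:
\[\||\nabla|^\alpha(u_jv_j)\|_{L^{p'}}\le C\bigl(\||\nabla|^\alpha u_j\|_{L^p}\|v_j\|_{L^q}+\|u_j\|_{L^q}\||\nabla|^\alpha v_j\|_{L^p}\bigr).\]
The exponent constraint $\tfrac{1}{p'}=\tfrac{2}{p}-\tfrac{\alpha}{d}$ rearranges to $p=\tfrac{3d}{d+\alpha}$, which is exactly our choice. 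Since $|\nabla|^\alpha(R_j|\nabla|^{-\alpha}F)=R_jF$, boundedness of $R_j$ on $L^p$ gives $\||\nabla|^\alpha u_j\|_{L^p}\le C\|F\|_{L^p}$; combining the same boundedness with HLS yields $\|v_j\|_{L^q}=\|R_j|\nabla|^{-\alpha}G\|_{L^q}\le C\|G\|_{L^p}$, and symmetrically for the second term. Summing over $j$ produces the bound on the $h_0$ contribution.

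For the $\bar h$ term, I use the Fourier identity $\sum_j\int R_j\phi\,R_j\psi\,\diff x=\int \phi\psi\,\diff x$ to rewrite $\int u\cdot v\,\diff x=c\int(|\nabla|^{-\alpha}F)(|\nabla|^{-\alpha}G)\,\diff x$; Cauchy--Schwarz and HLS then yield $|\int u\cdot v\,\diff x|\le C\||\nabla|^{-\alpha}F\|_{L^2}\||\nabla|^{-\alpha}G\|_{L^2}\le C\|F\|_{L^r}\|G\|_{L^r}$ with $r=\tfrac{2d}{d+2\alpha}$, and since $r\le p$ and $\T^d$ is a probability space, $\|\cdot\|_{L^r}\le\|\cdot\|_{L^p}$. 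Multiplying by $|\bar h|$ yields the second term on the right-hand side of~\eqref{eq:cubic-form-bound}.

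The main obstacle is verifying the Kato--Ponce estimate on $\T^d$ with the precise triple of exponents required, together with the bookkeeping to ensure the arithmetic identities land exactly at $p=\tfrac{6d}{3d-\s-1}$; the constraints $\alpha\in(\tfrac12,\tfrac{d-1}{2}]$ keep all exponents comfortably in the admissible range for both Riesz transform boundedness and HLS. The final ``Consequently'' extension of the trilinear form to all $\mu\in\Mc(\T^d)$ with $\||\nabla|^{-\alpha}\mu\|_{L^p}<\infty$ then follows from the continuity furnished by~\eqref{eq:cubic-form-bound} together with the density of $C^\infty(\T^d)$ in that space under the norm $\||\nabla|^{-\alpha}\cdot\|_{L^p}$.
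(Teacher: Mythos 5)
Your proof is correct and follows essentially the same route as the paper's: split $h$ into its mean and the mean-zero part, handle the mean part by a Plancherel/Cauchy--Schwarz plus Sobolev argument, and handle the mean-zero part by dualizing with $|\nabla|^{-\alpha}h_0$ and applying the Kato--Ponce fractional Leibniz rule together with Sobolev/HLS and Riesz-transform boundedness, with the exponent arithmetic working out at exactly $p=\tfrac{6d}{3d-\s-1}$. The only differences are cosmetic (you write $F=|\nabla|^{-\alpha}f$ and $u=\nabla\g*f$ where the paper calls $F:=\nabla\g*f$, and you invoke HLS plus nesting of $L^p$ norms where the paper cites Sobolev directly), so there is no substantive divergence to report.
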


\begin{proof} We set $F=\nabla\g*f$ and $G=\nabla\g*g$. We first bound
\[
\bigg|\int F\cdot G h\bigg|\leq \bigg|\int h\bigg|\bigg|\int F\cdot G\bigg|+ \bigg|\int F\cdot G \bigg(h-\int h\bigg)\bigg|
\]
so that $h-\int h$ is zero-mean. To bound the first term we use H\"older's inequality and then Sobolev's inequality to find that
\[\bigg|\int F\cdot G\bigg|\leq C\big\||\nabla|^{1+\s-d}f\big\|_{L^2}\big\||\nabla|^{1+\s-d}g\big\|_{L^2}\leq C\big\||\nabla|^{\frac{1}{2}+\frac{\s-d}{2}}f\big\|_{L^{\frac{6d}{3d-\s-1}}}\big\||\nabla|^{\frac{1}{2}+\frac{\s-d}{2}}g\big\|_{L^{\frac{6d}{3d-\s-1}}}.\]
To bound the second term, H\"older's inequality implies that
\[
\Bigg|\int F\cdot G\bigg(h-\int h\bigg)\Bigg|\leq \big\| |\nabla|^{\frac{d-\s}{2}-\frac{1}{2}}(F\cdot G)\big\|_{L^{\frac{6d}{3d+\s+1}}}\big\||\nabla|^{\frac{1}{2}+\frac{\s-d}{2}}h\big\|_{L^{\frac{6d}{3d-\s-1}}}.
\]
The fractional Leibniz rule in turn gives the bound
\[
\big\||\nabla|^{\frac{d-\s}{2}-\frac{1}{2}}(FG)\big\|_{L^{\frac{6d}{3d+\s+1}}}\leq C\Big(\big\| |\nabla|^{\frac{1}{2}+\frac{\s-d}{2}}f\big\|_{L^{\frac{6d}{3d-\s-1}}}\|G\|_{L^{\frac{3d}{\s+1}}}+\|F\|_{L^{\frac{3d}{\s+1}}}\big\| |\nabla|^{\frac{1}{2}+\frac{\s-d}{2}}g\big\|_{L^{\frac{6d}{3d-\s-1}}}\Big).
\]
Sobolev's inequality implies that
\[\|F\|_{L^{\frac{3d}{\s+1}}}\leq C \big\||\nabla|^{\frac{1}{2}+\frac{\s-d}{2}}f\big\|_{L^{\frac{6d}{3d-\s-1}}},\]
thus using identical bounds on $G$ we have in total found that
\[\Bigg|\int F\cdot G\bigg(h-\int h\bigg)\Bigg|\leq C\big\||\nabla|^{\frac{1}{2}+\frac{\s-d}{2}}h\big\|_{L^{\frac{6d}{3d-\s-1}}}\big\||\nabla|^{\frac{1}{2}+\frac{\s-d}{2}}f\big\|_{L^{\frac{6d}{3d-\s-1}}}\big\||\nabla|^{\frac{1}{2}+\frac{\s-d}{2}}g\big\|_{L^{\frac{6d}{3d-\s-1}}},\]
which completes the claim.
\end{proof}

We now show that we can construct a good approximating sequence for all $\mu\in\A.$ Essentially these are Gaussian mollifications of $\mu$ except we need to modify them near time zero so they have initial conditions equal to $\gamma.$

\begin{proposition}\label{prop:recovery} Suppose $\mu$ is a solution to~\eqref{eq:mve+b} with $I_\gamma(\mu)<\infty$ and $\mu\in\A$. Then setting
\[\nu_\eps^t:=
\begin{cases}
\gamma*\Phi^t&0\leq t\leq \eps,\\
\mu^{t-\eps}*\Phi^\eps &\eps< t\leq T,
\end{cases}  
\]
$\nu_\eps$ satisfies the conditions of Proposition~\ref{prop:LLN}  with drift $b_\eps$ for all $\eps>0$. More so, $\nu_\eps\rightarrow \mu$ in $\Cs^T$ as $\eps\rightarrow0$ and 
\begin{equation}\label{eq:recovery-bound}
    \limsup_{\eps\rightarrow0}\frac{1}{4\sigma}\int_0^T \int_{\T^d} |b^t_\eps(x)|^2\,\diff \nu^t_\eps(x)\,\diff t\leq \sup_{\phi\in C^\infty([0,T]\times\T^d)}S(\mu,\phi).
\end{equation}
\end{proposition}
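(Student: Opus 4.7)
The proof will proceed in three steps: identify a drift $b_\eps$ making $\nu_\eps$ a weak solution to~\eqref{eq:mve+b}, verify the regularity hypotheses of Proposition~\ref{prop:LLN} and convergence $\nu_\eps\to\mu$ in $\Cs^T$, and establish the action bound~\eqref{eq:recovery-bound}. Throughout I fix $b$ to be the minimal-norm drift given by Remark~\ref{rem:perturbed_representation}, so that $\sup_\phi S(\mu,\phi)=\tfrac{1}{4\sigma}\int_0^T\int|b^t|^2\,\diff\mu^t\,\diff t$.

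First I identify $b_\eps$. For $t\in(\eps,T]$, convolution with $\Phi^\eps$ commutes with both $\partial_t$ and $\sigma\Delta$ on the torus, so mollifying~\eqref{eq:mve+b} gives
\[\partial_t\nu_\eps^t=\sigma\Delta\nu_\eps^t+\nabla\cdot\big((\mu^{t-\eps}\nabla\g*\mu^{t-\eps})*\Phi^\eps\big)-\nabla\cdot\big((b^{t-\eps}\mu^{t-\eps})*\Phi^\eps\big),\]
and setting
\[b_\eps^t:=\nabla\g*\nu_\eps^t-\frac{(\mu^{t-\eps}\nabla\g*\mu^{t-\eps})*\Phi^\eps}{\nu_\eps^t}+\frac{(b^{t-\eps}\mu^{t-\eps})*\Phi^\eps}{\nu_\eps^t}\]
makes $\nu_\eps$ a weak solution to~\eqref{eq:mve+b} on $(\eps,T]$. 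On $[0,\eps]$, $\nu_\eps^t=\gamma*\Phi^t$ solves the heat equation, so $b_\eps^t:=\nabla\g*\nu_\eps^t$ works there. Since $\nu_\eps^{\eps-}=\nu_\eps^{\eps+}=\gamma*\Phi^\eps$ this gives a weak solution on all of $[0,T]$. For each $\eps>0$ heat-kernel regularization makes $\nu_\eps^t$ smooth and bounded below away from $0$ on the compact torus, so $b_\eps\in L^2_tC^\infty_x$ and in particular $B_{b_\eps}<\infty$. Convergence $\nu_\eps\to\mu$ in $\Cs^T$ follows from standard mollification estimates together with weak continuity of $\mu$ and $\mu^0=\gamma$.

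The crux is the action bound. For $t\in(\eps,T]$ I decompose $b_\eps^t=G_\eps^t+R_\eps^t$ with
\[G_\eps^t:=\frac{(b^{t-\eps}\mu^{t-\eps})*\Phi^\eps}{\nu_\eps^t},\qquad R_\eps^t:=\nabla\g*\nu_\eps^t-\frac{(\mu^{t-\eps}\nabla\g*\mu^{t-\eps})*\Phi^\eps}{\nu_\eps^t}.\]
Pointwise Cauchy--Schwarz applied to $(b^{t-\eps}\mu^{t-\eps})*\Phi^\eps(x)$ yields $|(b^{t-\eps}\mu^{t-\eps})*\Phi^\eps(x)|^2\leq\nu_\eps^t(x)\cdot(|b^{t-\eps}|^2\mu^{t-\eps})*\Phi^\eps(x)$, which after integrating gives $\int|G_\eps^t|^2\,\diff\nu_\eps^t\leq\int|b^{t-\eps}|^2\,\diff\mu^{t-\eps}$. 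Using $|b_\eps^t|^2\leq(1+\alpha)|G_\eps^t|^2+(1+\alpha^{-1})|R_\eps^t|^2$, taking $\limsup_\eps$ and then sending $\alpha\to 0$, the claim~\eqref{eq:recovery-bound} reduces to
\[\int_\eps^T\int_{\T^d}|R_\eps^t|^2\,\diff\nu_\eps^t\,\diff t\to 0,\]
plus the initial-slab estimate $\int_0^\eps\int|\nabla\g*(\gamma*\Phi^t)|^2\,\diff(\gamma*\Phi^t)\,\diff t=O(\eps)$, which follows from Proposition~\ref{prop:alternate_commutator_estimate} and $\gamma\in L^\infty$.

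The main obstacle is showing this commutator vanishing, which is exactly~\eqref{eq:intro_commutator} and is the only place where the hypothesis $\mu\in\mathscr{A}$ enters. The plan is to exploit the identity
\[R_\eps^t(x)\,\nu_\eps^t(x)=\int_{\T^d}\Phi^\eps(x-y)\,\mu^{t-\eps}(y)\,\big[(\nabla\g*\mu^{t-\eps})*\Phi^\eps(x)-\nabla\g*\mu^{t-\eps}(y)\big]\,\diff y,\]
which by Cauchy--Schwarz against the probability measure $\mu^{t-\eps}(y)\Phi^\eps(x-y)/\nu_\eps^t(x)\,\diff y$ reduces the commutator integral to a $\mu^{t-\eps}$-weighted $L^2$ oscillation of $\nabla\g*\mu^{t-\eps}$ at scale $\eps$. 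Proposition~\ref{prop:alternate_commutator_estimate} makes $\int|\nabla\g*\mu^t|^2\,\diff\mu^t$ well-defined and continuous in the norm $\||\nabla|^{\frac{1}{2}+\frac{\s-d}{2}}\cdot\|_{L^{6d/(3d-\s-1)}(\T^d)}$, whose time integrability is precisely the definition of $\mathscr{A}$; standard mollification then shows the oscillation vanishes in $L^1_t$ as $\eps\to 0$, yielding~\eqref{eq:intro_commutator} and hence~\eqref{eq:recovery-bound}.
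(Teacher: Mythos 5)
Your proposal is correct and follows essentially the same route as the paper: mollify the drift, bound the $(b\mu)_\eps/\mu_\eps$ contribution by Jensen/Cauchy--Schwarz, and show the commutator remainder vanishes via Proposition~\ref{prop:alternate_commutator_estimate} together with dominated convergence in $t$ using the $\mathscr{A}$-norm. Your Young's-inequality split with parameter $\alpha$ and the pointwise Cauchy--Schwarz ``oscillation'' form are minor algebraic variants of the paper's direct expansion of $\int|R^t_\eps|^2\,\diff\nu_\eps^t$ into three trilinear-form pieces; the one compressed step is your final sentence, which in a full writeup should be fleshed out as in the paper --- expand the squared oscillation, invoke continuity of the trilinear form from Proposition~\ref{prop:alternate_commutator_estimate} together with $\mu^t_\eps\to\mu^t$ in the relevant norm to get pointwise-in-$t$ convergence to zero, and dominate each term by $C\bigl(1+\bigl\||\nabla|^{\frac12+\frac{\s-d}2}\mu^t\bigr\|_{L^{6d/(3d-\s-1)}}^3\bigr)\in L^1([0,T])$ before applying dominated convergence.
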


\begin{proof} Throughout we let $(f)_\eps:=f*\Phi^\eps$ where $\Phi^t$ is the fundamental solution to the heat equation
\[
\begin{cases}
 \partial_t\Phi^t-\sigma\Delta\Phi^t=0,\\
\Phi^0=\delta_0.
\end{cases}
\]

Since $I_\gamma(\mu)<\infty$, Lemma~\ref{lem:variation} implies that there exists $b\in L^2([0,T],L^2(\mu^t))$ so that
\[\frac{1}{4\sigma}\int_0^T \int_{\T^d} |b^t|^2\,\diff \mu^t\,\diff t=\sup_{\phi\in C^\infty}S(\mu,\phi)\]
and $\mu$ is a weak solution to~\eqref{eq:SDE+b}. It is then immediate that $\nu_\eps\in\Cs^T$ and $\nu_\eps$ is a weak solution to
\[\partial_t\nu^t_\eps-\sigma\Delta\nu^t_\eps-\nabla\cdot(\nu^t_\eps\nabla\g*\nu^t_\eps)=-\nabla\cdot(b_\eps\nu^t_\eps),\]
where
\[
b_\eps^t:=
\begin{cases}
\nabla\g*(\gamma)_t&0\leq t\leq\eps\\
\frac{(b^{t-\eps}\mu^{t-\eps})_\eps}{\mu^{t-\eps}_\eps}+\nabla\g*\mu^{t-\eps}_\eps-\frac{(\mu^{t-\eps}\nabla\g*\mu^{t-\eps})_\eps}{\mu^{t-\eps}_\eps}&\eps<t\leq T
\end{cases}
\]
for $\mu_\eps^t:=(\mu^t)_\eps.$

First, we show that $\nu_\eps$ satisfies the conditions of Proposition~\ref{prop:LLN}. It is immediate that $\nu_\eps\in L^\infty([0,T],L^\infty(\T^d)),$ thus we must show that
 \[
 \int_0^T \|\nabla b_\eps^t\|_{L^\infty}^2+\big\||\nabla|^{\frac{d-\s}{2}}b_\eps^t\big\|_{L^{\frac{2d}{d-2-\s}}}^2\,\diff t<\infty.
 \]
We will repeatedly use that $(\mu_\eps)^{-1}\in L^\infty([0,T],C^k(\T^d))$ for any $k$. Indeed, this follows since $\mu^t$ is a probability measure and $\Phi^\eps$ is lower bounded on the torus for all $\eps>0,$ thus $\mu_\eps^t$ is uniformly lower bounded.

We note that $b^t\mu^t\in L^2([0,T],TV(\T^d))$ since
\[\label{eq:b^2 bound}
\int\psi \cdot b^t\,\diff \mu^t\leq \bigg(\int |b^t|^2 \,\diff \mu^t\bigg)^{1/2}\bigg( \int |\psi|^2 \,\diff \mu^t\bigg)^{1/2}\leq \bigg(\int |b^t|^2\,\diff \mu^t\bigg)^{1/2}\|\psi\|_{C^0}.
\]
As a consequence, $(b^t\mu^t)_\eps \in L^2([0,T],C^k(\T^d))$ for any $k$. Combined with the regularity of $(\mu_\eps)^{-1}$ this immediately implies that
\begin{equation}\label{eq:b-mol-bound}
    \int_0^T\bigg\|\frac{(b^t\mu^t)_\eps}{\mu^t_\eps}\bigg\|_{C^1}^2+\bigg\||\nabla|^{\frac{d-\s}{2}}\frac{(b^t\mu^t)_\eps}{\mu^t_\eps}\bigg\|_{L^{\frac{2d}{d-2-\s}}}\,\diff t<\infty. 
\end{equation}

The commutator estimate Proposition~\ref{prop:commutator-estimate} implies that $\mu^t\nabla\g*\mu^t$ as defined by~\eqref{def:mu_grad_mu} is in $L^\infty([0,T], C^k(\T^d)')$ for any $k>\frac{d-\s}{2}$. Using the regularity of $(\mu^t_\eps)^{-1}$ again, we find as a consequence that
\begin{equation}\label{eq:slope-mol-bound}
    \int_0^T\bigg\|\frac{(\mu^t\nabla\g*\mu^t)_\eps}{\mu^t_\eps}\bigg\|_{C^1}^2+\bigg\||\nabla|^{\frac{d-\s}{2}}\frac{(\mu^t\nabla\g*\mu^t)_\eps}{\mu^t_\eps}\bigg\|_{L^{\frac{2d}{d-2-\s}}}\,\diff t<\infty. 
\end{equation}

Since $\nabla\g$ is integrable it is immediate that
\[
    \int_0^T\|\nabla \g*\mu^t_\eps\|_{C^1}^2+\big\||\nabla|^{\frac{d-\s}{2}}\nabla \g*\mu^t_\eps\big\|_{L^{\frac{2d}{d-2-\s}}}^2\,\diff t<\infty.
\]
Together with~\eqref{eq:b-mol-bound} and~\eqref{eq:slope-mol-bound}, this implies that
\[
 \int_\eps^T \|b_\eps^t\|_{C^1}^2+\big\||\nabla|^{\frac{d-\s}{2}}b_\eps^t\big\|_{L^{\frac{2d}{d-2-\s}}}^2\,\diff t<\infty.
 \]

 To conclude that $\nu_\eps$ satisfies the conditions of Proposition~\ref{prop:LLN}, we note that for any $t\geq 0$
\[\|\nabla\g*(\gamma)_t\|_{C^1}^2+\big\||\nabla|^{\frac{d-\s}{2}}\nabla\g*(\gamma)_t\big\|_{L^{\frac{2d}{d-2-\s}}}^2\leq C\|\gamma\|_{L^\infty},\]
thus certainly
\[
 \int_0^\eps \|b_\eps^t\|_{C^1}^2+\big\||\nabla|^{\frac{d-\s}{2}}b_\eps^t\big\|_{L^{\frac{2d}{d-2-\s}}}^2\,\diff t<\infty.
 \]

By construction $\nu_\eps^0=\gamma$. Since for any $\rho\in \Pc(\T^d),$ \[d(\rho*\Phi_\eps,\rho)\leq C\eps\]
the convergence of $\nu_\eps$ to $\mu$  as $\eps\rightarrow 0$ is direct. To conclude the proposition it only remains to show that~\eqref{eq:recovery-bound} holds.

Since
\[\int_0^T \int |b^t_\eps|^2\,\diff \nu^t_\eps\,\diff t=\int_0^\eps \int |\nabla\g*(\gamma)_t|^2\,\diff (\gamma)_t\,\diff t+\int_0^{T-\eps}\bigg|\frac{(b^t\mu^t)_\eps}{\mu_\eps^t}+\nabla\g*\mu^t_\eps-\frac{(\mu^t\nabla\g*\mu^t)_\eps}{\mu_\eps^t}\bigg|^2\,\diff \mu_\eps^t\,\diff t,\]
it suffices to show that
\begin{equation}\label{eq:init_error_vanishes}
\lim_{\eps\rightarrow 0}\int_0^\eps\int |\nabla\g*(\gamma)_t|^2\,\diff (\gamma)_t\,\diff t=0,
\end{equation}
\begin{equation}\label{eq:b-jensen}
\int_0^T\int\bigg|\frac{(b^t\mu^t)_\eps}{\mu_\eps^t}\bigg|^2 \,\diff \mu_\eps^t\,\diff t\leq \int_0^T\int_{\T^d}|b^t|^2d\mu^t\,\diff t,\end{equation}
and
\begin{equation}\label{eq:commutator-vanishes}
\limsup_{\eps\rightarrow0}\int_0^T\int\bigg|\frac{(\mu^t\nabla\g*\mu^t)_\eps}{\mu_\eps^t}-\nabla\g*\mu_\eps^t \bigg|^2\,\diff \mu_\eps^t\,\diff t=0.
\end{equation}
The limit~\eqref{eq:init_error_vanishes} follows  since $\gamma\in L^\infty$ while inequality~\eqref{eq:b-jensen} follows by \cite[Lemma 8.1.10]{ambrosio_gradient_2008}. 

To prove~\eqref{eq:commutator-vanishes} we will use the dominated convergence theorem and that $\mu\in\A$. In particular, we show that the integrand in the time integral converges to 0 pointwise and is dominated by some $L^1([0,T])$ function.

We first show the pointwise convergence. Since $\mu\in\A$, $\big\||\nabla|^{\frac{1}{2}+\frac{\s-d}{2}}\mu^t\big\|_{L^{\frac{6d}{3d-\s-1}}}<\infty$ for almost every $t$, thus we only have to show convergence to $0$ for these times. We begin by expanding out the integral
 \begin{equation}\label{eq:square-expansion}
 \int\bigg|\frac{(\mu^t\nabla\g*\mu^t)_\eps}{\mu_\eps^t}-\nabla\g*\mu_\eps^t \bigg|^2\mu_\eps^t= \int\bigg|\frac{(\mu^t\nabla\g*\mu^t)_\eps}{\mu_\eps^t}\bigg|^2\mu_\eps^t-2\int(\mu^t\nabla\g*\mu^t)_\eps\cdot \nabla\g*\mu^t_\eps+\int |\nabla\g*\mu_\eps^t|^2 \mu_\eps^t.
 \end{equation}
Fixing $\eps>0$, Proposition~\ref{prop:commutator-estimate} implies that $\mu_\delta^t\nabla\g*\mu_\delta^t$ converges to $\mu^t\nabla\g*\mu^t$ in distribution and there exists $C_\eps>0$ so that
\[\sup_{\delta>0}\|(\mu_\delta^t\nabla\g*\mu^t_\delta)_\eps\|_{L^\infty}\leq C_\eps.\]
 It then follows by the dominated convergence theorem that 
$(\mu_\delta^t\nabla\g*\mu_\delta^t)_\eps$ converges to $(\mu^t\nabla\g*\mu^t)_\eps$ in $L^3(\T^d)$. Since $(\mu^t_\delta)_\eps^{-1}$ converges to $(\mu^t)_\eps^{-1}$ in $L^3(\T^d)$ as well, we have that
\[\int\bigg|\frac{(\mu^t\nabla\g*\mu^t)_\eps}{\mu_\eps^t}\bigg|^2\mu_\eps^t=\lim_{\delta\rightarrow0}\int\bigg|\frac{(\mu_\delta^t\nabla\g*\mu_\delta^t)_\eps}{(\mu_\delta)_\eps^t}\bigg|^2(\mu_\delta)_\eps^t.\]
\cite[Lemma 8.1.10]{ambrosio_gradient_2008} and Proposition~\ref{prop:alternate_commutator_estimate} then imply that
\begin{equation}\label{eq:drift-correction-bound}
   \int\bigg|\frac{(\mu_\delta^t\nabla\g*\mu_\delta^t)_\eps}{(\mu_\delta)_\eps^t}\bigg|^2\mu_\eps^t=\lim_{\delta\rightarrow0}\int\bigg|\frac{(\mu_\delta^t\nabla\g*\mu_\delta^t)_\eps}{(\mu_\delta)_\eps^t}\bigg|^2(\mu_\delta)_\eps^t\leq \lim_{\delta\rightarrow\infty}\int|\nabla\g*\mu_\delta^t|^2\mu_{\delta}^t=\int|\nabla\g*\mu^t|^2\,\diff\mu^t. 
\end{equation}
We emphasize that the farthest integral is only defined using Proposition~\ref{prop:alternate_commutator_estimate}.
Proposition~\ref{prop:alternate_commutator_estimate} also implies that
\[\lim_{\eps\rightarrow0}\int (\mu^t\nabla\g*\mu^t)_\eps\cdot \nabla\g*\mu_\eps^t=\int|\nabla\g*\mu^t|^2\,\diff \mu^t\]
and
\[\lim_{\eps\rightarrow0}\int |\nabla\g*\mu_\eps^t|^2\mu_\eps^t=\int|\nabla\g*\mu^t|^2\,\diff \mu^t.\]
Altogether these imply that
\begin{align*}
0&\leq \limsup_{\eps\rightarrow0}\int\bigg|\frac{(\mu^t\nabla\g*\mu^t)_\eps}{\mu_\eps^t}\bigg|^2\mu_\eps^t-2\int(\mu^t\nabla\g*\mu^t)_\eps\cdot \nabla\g*\mu^t_\eps+\int |\nabla\g*\mu_\eps^t|^2 \mu_\eps^t
\\&\leq \int|\nabla\g*\mu^t|^2d\mu^t-2\int|\nabla\g*\mu^t|^2\,\diff\mu^t+\int|\nabla\g*\mu^t|^2\,\diff\mu^t
\\&=0,
\end{align*}
namely
\[\lim_{\eps\rightarrow0}\int\bigg|\frac{(\mu^t\nabla\g*\mu^t)_\eps}{\mu_\eps^t}-\nabla\g*\mu_\eps^t \bigg|^2\mu_\eps^t=0,\]
for almost every $t$.

We now show that~\eqref{eq:square-expansion} is dominated. Proposition~\ref{prop:alternate_commutator_estimate} and~\eqref{eq:drift-correction-bound} imply that
\[\int\bigg|\frac{(\mu_\delta^t\nabla\g*\mu_\delta^t)_\eps}{(\mu_\delta)_\eps^t}\bigg|^2\mu_\eps^t\leq \int|\nabla\g*\mu^t|^2\,\diff\mu^t\leq C\Big(1+\big\||\nabla|^{\frac{1}{2}+\frac{\s-d}{2}}\mu^t\big\|_{L^{\frac{6d}{3d-\s-1}}}^3\Big).\]
Proposition~\ref{prop:alternate_commutator_estimate} also implies that
\begin{align*}
 \bigg|\int (\mu^t\nabla\g*\mu^t)_\eps\cdot \nabla\g*\mu_\eps^t\bigg|&=\bigg|\int \mu^t\nabla\g*\mu^t\cdot \nabla\g*(\mu^t_\eps)_\eps\bigg|
 \\&\leq C(1+\big\||\nabla|^{\frac{1}{2}+\frac{\s-d}{2}}\mu^t\big\|_{L^{\frac{6d}{3d-\s-1}}}^2)\big\||\nabla|^{\frac{1}{2}+\frac{\s-d}{2}}(\mu_\eps^t)_\eps\big\|_{L^{\frac{6d}{3d-\s-1}}}
 \\&\leq C\Big(1+\big\||\nabla|^{\frac{1}{2}+\frac{\s-d}{2}}\mu\big\|_{L^{\frac{6d}{3d-\s-1}}}^3\Big)   
\end{align*}
and
\[\bigg|\int |\nabla\g*\mu_\eps^t|^2\mu_\eps^t\bigg|\leq C\Big(1+\big\||\nabla|^{\frac{1}{2}+\frac{\s-d}{2}}\mu^t_\eps\big\|_{L^{\frac{6d}{3d-\s-1}}}^3\Big)\leq C\Big(1+\big\||\nabla|^{\frac{1}{2}+\frac{\s-d}{2}}\mu^t\big\|_{L^{\frac{6d}{3d-\s-1}}}^3\Big). \]
Together these three inequalities show that
\[\int\bigg|\frac{(\mu^t\nabla\g*\mu^t)_\eps}{\mu_\eps^t}-\nabla\g*\mu_\eps^t \bigg|^2\mu_\eps^t\leq C\Big(1+\big\||\nabla|^{\frac{1}{2}+\frac{\s-d}{2}}\mu^t\big\|_{L^{\frac{6d}{3d-\s-1}}}^3\Big),\]
where we note that the left hand side is in $L^1([0,T])$ since $\mu\in\A$.

As we have shown that~\eqref{eq:square-expansion} converges to 0 for almost every $t$ and is bounded by an integrable function,~\eqref{eq:commutator-vanishes} holds by the dominated convergence theorem.
\end{proof}

\subsection{Lower bound}\label{subsec:lower-bound}

The proof of the lower bound of Theorem~\ref{thm:LDP} follows using Proposition~\ref{prop:lower-bound} and Proposition~\ref{prop:recovery}.

\begin{proof}[Proof of Item~\ref{item:LDP-lower-bound} in Theorem~\ref{thm:LDP}]
We show that if $\mu\in\A$ and $I_\gamma(\mu)<\infty$, then for all $\eps>0$
\[\liminf_{N\rightarrow\infty}\frac{1}{N}\log\P(\mu_N\in B_\eps(\mu))\geq -\sup_{\phi\in C^\infty}S(\mu,\phi).\]
This immediately shows Item~\ref{item:LDP-lower-bound} when $\s>0$. When $\s=0$ the lower bound is completed by verifying that $\A\subset\{I_\gamma<\infty\}$.

Letting $\nu_\delta$ be defined as in Proposition~\ref{prop:recovery}, since $\nu_\delta\rightarrow \mu$ in $\Cs^T$ as $\delta\rightarrow 0$, $B_\frac{\eps}{2}(\nu_\delta)\subset B_\eps(\mu)$ for all sufficiently small $\delta$. Fixing such a $\delta$, as in the proof of the first inequality in Theorem~\ref{thm:local-LDP}, there exists $\eps'>0$ so that for all sufficiently large $N$
\[\bigg\{\sup_{t\in[0,T]}F_N(\ux_N^t,\nu_\delta^t)<\eps'\bigg\}\subset\Big\{\mu_N\in B_{\frac{\eps}{2}}(\nu_\delta)\Big\}.\]
This implies that for all sufficiently large $N$
\[\P\big(\mu_N\in B_\eps(\mu)\big)\geq \P(\mu_N\in B_{\frac{\eps}{2}}(\nu_\delta))\geq \P\bigg(\sup_{t\in[0,T]}F_N(\ux_N^t,\nu_\delta^t)<\eps'\bigg). \]
As $\nu_\delta$ satisfies the conditions of Proposition~\ref{prop:lower-bound}, we then have that
\begin{align*}
  \liminf_{N\rightarrow\infty}\frac{1}{N}\log\P\big(\mu_N\in B_\eps(\mu)\big)&\geq \lim_{\eps'\rightarrow0}\liminf_{N\rightarrow\infty}\frac{1}{N}\log\P\bigg(\sup_{t\in[0,T]}F_N(\ux_N^t,\nu_\delta^t)<\eps'\bigg)
  \\&\geq -\frac{1}{4\sigma}\int_0^T\int |b_\delta^t|^2\,\diff \nu_\delta^t\,\diff t.
\end{align*}
Since
\[\limsup_{\delta\rightarrow0}\frac{1}{4\sigma}\int_0^T\int |b_\delta^t|^2\,\diff \nu_\delta^t\,\diff t\leq \sup_{\phi\in C^\infty}S(\mu,\phi), \]
we immediately find the desired inequality.

To conclude it suffices to show that when $\s=0$, $\A\subset \{\mu\in \Cs^T\mid Q(\mu)<\infty\}$. Indeed, the Gagliardo--Nirenberg--Sobolev inequality implies that
\[\big\||\nabla|^{\frac{1}{2}-\frac{d}{2}}\mu^t\big\|_{L^{\frac{6d}{3d-1}}}\leq \big\||\nabla|^{-\frac{d}{2}}\mu^t\big\|_{L^2}^{1/3}\big\||\nabla|^{1-\frac{d}{2}}\mu^t\big\|_{L^2}^{2/3},\]
thus
\[\int_0^T\big\||\nabla|^{\frac{1}{2}-\frac{d}{2}}\mu^t\big\|_{L^{\frac{6d}{3d-1}}}^3\,\diff t \leq \bigg(\sup_{t\in[0,T]}\|\mu^t\|_{\dot H^{-\frac{d}{2}}}\bigg)\bigg(\int_0^T\|\mu^t\|_{\dot H^{1-\frac{d}{2}}}^2\,\diff t\bigg).\]
The right-hand side is finite whenever $Q(\mu)<\infty$.
\end{proof}
\begin{remark} With Proposition~\ref{prop:upper-bound} the above shows that
\[ -I_\gamma(\mu)\geq\lim_{\eps\rightarrow 0}\limsup_{N\rightarrow\infty}\frac{1}{N}\P(\mu_N\in B_\eps(\mu))\geq\lim_{\eps\rightarrow 0}\liminf_{N\rightarrow\infty}\frac{1}{N}\P(\mu_N\in B_\eps(\mu))\geq -\sup_{\phi\in C^\infty}S(\mu,\phi),\]
for all $\mu\in\A$ with $Q(\mu)<\infty$. Since $\sup_{\phi\in C^\infty}S(\mu,\phi)\leq I_\gamma(\mu)$, this implies Remark~\ref{rem:A_equivalence}.
\end{remark}

\subsection{Second inequality in Theorem~\ref{thm:local-LDP}}

We now complete Theorem~\ref{thm:local-LDP} by proving the second inequality. This again follows by Proposition~\ref{prop:lower-bound} and Proposition~\ref{prop:recovery}, except we must also show that $\nu_\eps$ converges to $\mu$ in the uniform $\H$ topology.

\begin{proof}[Proof of second inequality in Theorem~\ref{thm:local-LDP}]
Since $\mu\in L^\infty([0,T],L^\infty(\T^d))$, it is immediate that $\mu\in\A$ and $Q(\mu)<\infty.$ Without loss of generality we can assume that 
\[\sup_{\phi\in C^\infty([0,T]\times \T^d)}S(\mu,\phi)<\infty.\]

We claim that for all $\eps>0$ there exists $\eps'>0$ so that for all sufficiently small $\delta$ and large $N$
\[\bigg\{\sup_{t\in[0,T]}F_N(\ux_N^t,\nu_\delta^t)<\eps'\bigg\}\subset \bigg\{\sup_{t\in[0,T]}F_N(\ux_N^t,\mu^t)<\eps\bigg\}.\]
We can then conclude the claim since with Propositions~\ref{prop:lower-bound} and~\ref{prop:recovery} this implies that
\begin{align*}
  \liminf_{N\rightarrow\infty}\frac{1}{N}\log\P\Big(\sup_{t\in[0,T]}F_N(\ux_N^t,\mu^t)<\eps\Big)&\geq \liminf_{\delta\rightarrow0}\lim_{\eps'\rightarrow0}\liminf_{N\rightarrow\infty} \frac{1}{N}\log\P\Big(\sup_{t\in[0,T]}F_N(\ux_N^t,\nu_\delta^t)<\eps'\Big)
  \\&\geq -\sup_{\phi\in C^\infty}S(\mu,\phi).
\end{align*}

First, expanding out the definition of $F_N(\ux_N^t,\mu^t)$ and using Lemma~\ref{lem:renormalized_Holder} and Young's inequality we find the inequality
\begin{align*}
    F_N(\ux_N^t,\mu^t)&=F_N(\ux_N^t,\nu_\delta^t)+2 \int \g(x-y)\,\diff (\mu_N^t-\nu_\delta^t)(x)\,\diff (\nu_\delta^t-\mu^t)(y)+\|\nu_\delta^t-\mu^t\|_{\H}^2
    \\&\leq C\big(F_N(\ux_N^t,\nu_\delta^t)+\|\nu_\delta^t-\mu^t\|_{\H}^2+C\|\mu^t\|_{L^\infty}N^{-\beta}\big).
\end{align*}
It thus suffices to show that 
\begin{equation}\label{eq:stronger_recovery}
    \lim_{\delta\rightarrow0}\sup_{t\in[0,T]}\|\nu_\delta^t-\mu^t\|_{\H}=0.
\end{equation}

We recall that in the proof of the first inequality in Theorem~\ref{thm:local-LDP} we showed that if $t_k\rightarrow t$ as $k\rightarrow\infty$ then 
\[\lim_{k\rightarrow\infty}\|\mu^{t_k}-\mu^t\|_{\H}=0,\]
that is $(\mu-1)\in C([0,T],\H_0(\T^d))$. We then note that 
\[\lim_{\delta\rightarrow0}\sup_{t\in[0,T]}\|(\mu)^t_\delta-\mu^t\|_{\H}=0.\]
This follows by the Arzel\`a-Ascoli theorem since $\|(\mu)_{\delta}^t-\mu^t\|_{\H}\rightarrow 0$ for every $t$ while
    \[ \|(\mu^t)_\delta -(\mu^s)_\delta\|_{\H}\leq C\|\mu^t-\mu^s\|_{\H},\]
thus $(\mu)_\delta-1$ is a uniformly continuous family in $C([0,T],\H_0(\T^d)).$ Since $\|(\gamma)_\delta-\gamma\|_{\H}\rightarrow 0$ as $\delta\rightarrow 0,$ this immediately implies~\eqref{eq:stronger_recovery}.
\end{proof}

\appendix

\section{Commutator estimates and modulated energy bounds on the torus}\label{appendix}

Here we prove the claimed inequalities involving the modulated energy. Although we are essentially adapting the proofs in~\cite{nguyen_mean-field_2022} to the torus, we include the proofs for completeness. 

The primary observation that we use is that the modulated energy $F_N(\ux_N,\mu)$ is essentially the $\H$ norm between the empirical measure $\mu_N$ and $\mu$. Of course, this does not actually make sense since $\mu_N$ is not in $\H$ due to the self-interaction of the Diracs with respect to the Riesz potential. However, if we appropriately mollify the empirical measure $\mu_N$ by ``smearing'' the mass of each Dirac onto a less singular set, then this smeared measure lives in $\H$ but is quantitatively close to $\mu_N$. We then use this closeness to recover estimates that hold for measures in $\H$.

Throughout we let $\delta_{x}^{(\eta)}$ denote the uniform probability measure supported on a sphere of radius $\eta$ centered at $x$. We then let $\g^{(\eta)}(x):=\g*\delta^{(\eta)}_0(x)$, and
\[\mu_N^{(\eta)}:=\frac{1}{N}\sum_{i=1}^N\delta_{x_i}^{(\eta)}\]
for some particle configuration $\ux_N\in(\T^d)^N.$

\subsection{Modulated energy inequalities}\label{subsec:modulated_energy inequalities}

First, we restate~\cite[Proposition 2.1]{nguyen_mean-field_2022} in the context of the torus. This shows that the modulated energy is ``monotone under mollification.'' That is, modulo some error terms, $F_N(\ux_N,\mu)$ controls the $\H$ norm between $\mu_N^{(\eta)}$ and $\mu$.

\begin{proposition}\label{prop:modulated_monotonicity}
There exists $\rr_0>0$ and $C>0$ so that for all $0<\eta<\frac{\rr_0}{2}$, $\ux_N\in (\T^d)^N$ pairwise distinct, and $\mu\in\Pc(\T^d)\cap L^\infty(\T^d)$
\begin{align*}
&\frac{1}{N^2}\sum_{\substack{1\leq i\neq j\leq N\\|x_i-x_j|\leq \frac{1}{2}\rr_0}}^N \big(\g(x_i-x_j)-\g^{(\eta)}(x_i-x_j)\big)_++\c_{d,\s}^{-1}\big\|\mu_N^{(\eta)}-\mu\big\|_{\H(\T^d)}^2
\\&\quad\leq F_N(\ux_N,\mu)+C\|\mu\|_{L^\infty(\T^d)}\Big(\eta^2+\frac{\eta^{-\s}-\log\eta}{N}\Big).
\end{align*}
\end{proposition}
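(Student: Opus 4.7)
The plan is to compare $F_N(\ux_N,\mu)$ to its ``fully smeared'' analogue
\[
\widetilde F_N^{(\eta)}(\ux_N,\mu):=\int_{(\T^d)^2}\g(x-y)\,\diff (\mu_N^{(\eta)}-\mu)^{\otimes 2}(x,y).
\]
Unlike $F_N$, this is a genuine quadratic form: the signed measure $\mu_N^{(\eta)}-\mu$ has no atoms and lies in $\H(\T^d)$ because each Dirac has been spread over a sphere of radius $\eta$, so Plancherel combined with~\eqref{eq:Riesz} gives $\widetilde F_N^{(\eta)}=\c_{d,\s}^{-1}\|\mu_N^{(\eta)}-\mu\|_{\H}^2$. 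The whole proof consists in showing that $F_N-\widetilde F_N^{(\eta)}$ decomposes into explicit errors of the claimed size, plus a non-negative pair term that accounts for the remaining left-hand-side contribution.

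To execute this I would first derive the exact identity
\begin{align*}
F_N(\ux_N,\mu)&=\c_{d,\s}^{-1}\|\mu_N^{(\eta)}-\mu\|_{\H}^2+\frac{1}{N^2}\sum_{1\leq i\neq j\leq N}\bigl(\g-\g^{(\eta,\eta)}\bigr)(x_i-x_j)\\
&\quad-\frac{1}{N}\g^{(\eta,\eta)}(0)+\frac{2}{N}\sum_{i=1}^N\bigl((\g^{(\eta)}-\g)*\mu\bigr)(x_i),
\end{align*}
where $\g^{(\eta,\eta)}:=\g*\delta_0^{(\eta)}*\delta_0^{(\eta)}$; this follows from expanding both $F_N$ and $\widetilde F_N^{(\eta)}$ as squares, noting the $\mu^{\otimes 2}$ pieces cancel, that the cross term produces $(\g-\g^{(\eta)})*\mu$, and that the diagonal self-interaction $\frac{1}{N}\g^{(\eta,\eta)}(0)$ is present in $\widetilde F_N^{(\eta)}$ but absent from $F_N$. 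The three errors are then estimated pointwise: averaging $\g$ over spheres of radius $\eta$ and invoking~\eqref{eq:periodic-correction} yields $|\g^{(\eta,\eta)}(0)|\leq C(\eta^{-\s}\indc_{\s>0}-\log\eta\,\indc_{\s=0})$, producing the $(\eta^{-\s}-\log\eta)/N$ contribution; using that $\delta_0^{(\eta)}$ is radial with vanishing first moment, Taylor expansion combined with integrability of $\nabla^2\g$ (valid since $\s+2<d$) yields $\|\g-\g^{(\eta)}\|_{L^1}\leq C\eta^2$ and hence $\|(\g^{(\eta)}-\g)*\mu\|_{L^\infty}\leq C\|\mu\|_{L^\infty}\eta^2$; finally, the far-diagonal portion of the pair sum (when $|x_i-x_j|>\rr_0/2$, so $\g$ and $\g^{(\eta,\eta)}$ are both smooth at those separations) is $O(\eta^2)$ by Taylor, absorbed into $C\|\mu\|_{L^\infty}\eta^2$ since $\|\mu\|_{L^\infty}\geq 1$ on the unit-volume torus.

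The crux, and what I expect to be the main technical obstacle, is establishing the near-diagonal pointwise inequality $\g(z)-\g^{(\eta,\eta)}(z)\geq (\g(z)-\g^{(\eta)}(z))_+$ for $|z|\leq \rr_0/2$, which is exactly what is needed to lower-bound the remaining part of the pair sum by the positive-part sum appearing in the statement. My plan for this is to exploit superharmonicity near the origin: by~\eqref{eq:periodic-correction} together with the explicit computations $-\Delta|x|^{-\s}=\s(d-2-\s)|x|^{-\s-2}\geq 0$ and $-\Delta(-\log|x|)=(d-2)|x|^{-2}\geq 0$ (valid in the sub-Coulomb range $\s\in[0,d-2)$), the kernel $\g$ is superharmonic on a punctured neighbourhood of the origin provided $\rr_0$ is chosen small enough to dominate the $C^\infty$ correction from~\eqref{eq:periodic-correction}; the spherical mean-value inequality then gives $\g^{(\eta)}\leq \g$ on $B_{\rr_0/2}(0)$ whenever $\eta<\rr_0/2$. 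Since the Laplacian commutes with spherical averaging, $-\Delta\g^{(\eta)}=(-\Delta\g)^{(\eta)}\geq 0$ on the same ball, so a second application of the mean-value inequality yields $\g^{(\eta,\eta)}\leq\g^{(\eta)}\leq\g$ on $B_{\rr_0/2}(0)$, whence $\g-\g^{(\eta,\eta)}\geq\max(\g-\g^{(\eta)},0)=(\g-\g^{(\eta)})_+$ there. Inserting this into the identity, collecting the three error bounds, and absorbing constants then gives the proposition.
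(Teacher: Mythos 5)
Your argument reconstructs the proof of Proposition~2.1 in Nguyen--Rosenzweig--Serfaty, which is exactly what the paper invokes after checking that $\g$ is superharmonic near the origin (hence satisfies the spherical mean-value inequality $\g^{(\eta)}\leq\g$): the decomposition of $F_N$ into $\c_{d,\s}^{-1}\|\mu_N^{(\eta)}-\mu\|_{\H}^2$, a pair sum in $\g-\g^{(\eta,\eta)}$, the self-interaction $-\frac{1}{N}\g^{(\eta,\eta)}(0)$, and a cross term bounded via $\|\g-\g^{(\eta)}\|_{L^1}\lesssim\eta^2$, together with the chain $\g^{(\eta,\eta)}\leq\g^{(\eta)}\leq\g$ on the near-diagonal, are precisely the ingredients of that cited proof. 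So this is correct and takes essentially the same approach (with the cited argument written out rather than referenced); the only minor caveat is that superharmonicity of $\g^{(\eta)}$ on $B_\eta(z)$ for $|z|\leq\rr_0/2$ requires $B_\eta(z)\subset B_{\rr_0-\eta}(0)$, so one should take $\eta<\rr_0/4$ or, equivalently, shrink the $\rr_0$ produced by~\eqref{eq:periodic-correction} before stating the proposition.
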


\begin{remark} The lower bound~\eqref{eq:modulated_energy_positivity} is an immediate consequence of Proposition~\ref{prop:modulated_monotonicity} with $\eta=N^{-1/d}.$
\end{remark}
\begin{proof} We note that~\eqref{eq:periodic-correction} implies that there exists $C>0$ so that
\[
\ |\nabla^{\otimes k} \g|(x)\leq C\Big( \frac{1}{|x|^{\s+k}}+|\log|x||\indc_{\s=k=0}\Big)\qquad \text{for all } k\geq 0 \text{ and } x\in\T^d\setminus\{0\}.
\]
Since $\s<d-2$, $(-\Delta)\g$ is a constant multiple of the periodic Riesz potential corresponding to parameter $\s+2$, thus~\eqref{eq:periodic-correction} implies that there exists $\rr_0>0$ so that
\begin{equation}\label{eq:subharmonicitiy}
    \Delta\g\leq 0\qquad \text{ in } B_{\rr_0}(0).
\end{equation}
This immediately implies (say by~\cite[Equation (2.1)]{nguyen_mean-field_2022}) that if $|x|+\eta<\rr_0$ then
\begin{equation}\label{eq:mollification_monotonicity}
 \g^{(\eta)}(x)\leq \g(x),   
\end{equation}
and more generally we see that $\g$ satisfies assumptions (1.13) and (1.14) in~\cite{nguyen_mean-field_2022}. The proof of the proposition then follows verbatim as~\cite[Proposition 2.1]{nguyen_mean-field_2022}. We note that since $\mu$ is a probability measure, it must be that $\|\mu\|_{L^\infty}\geq 1,$ which we use to clean up the multiplicative factors in the bound.
\end{proof}

Proposition~\ref{prop:modulated_monotonicity} allows us to immediately show that the modulated energy controls the weak convergence of $\mu_N$ to $\mu$ and that Lemma~\ref{lem:renormalized_Holder} holds.

\begin{proof}[Proof of Lemma~\ref{lem:weak_control}]
We have that for any $\eta>0$
\[\int \psi \,\diff(\mu_N-\mu)=\int \psi \,\diff(\mu_N-\mu_N^{(\eta)})+\int \psi \,\diff(\mu_N^{(\eta)}-\mu)\leq \|\nabla\psi\|_{L^\infty}\eta +\|\psi\|_{\dot H^{\frac{d-\s}{2}}}\|\mu_N^{(\eta)}-\mu\|_{\H}.\]
The proposition is then concluded by bounding the last term using Proposition~\ref{prop:modulated_monotonicity} and taking $\eta=N^{-1/d}.$
\end{proof}

\begin{proof}[Proof of Lemma~\ref{lem:renormalized_Holder}]
We have that
\[\int \g\,\diff (\mu_N-\mu)\,\diff (\nu-\mu)=\int \g\,\diff (\mu_N-\mu_N^{(\eta)})\,\diff (\nu-\mu)+\int \g\,\diff (\mu_N^{(\eta)}-\mu)\,\diff (\nu-\mu).\]
H\"older's inequality and then Proposition~\ref{prop:modulated_monotonicity} imply that
\begin{align*}\bigg|\int \g\,\diff (\mu_N^{(\eta)}-\mu)\,\diff (\nu-\mu)\bigg|&\leq\|\mu_N^{(\eta)}-\mu\|_{\H}\|\nu-\mu\|_{\H}
\\&\leq \Big(F_N(\ux_N,\mu)+C\|\mu\|_{L^\infty}\Big(\eta^2+\frac{\eta^{-\s}-\log\eta}{N}\Big)\Big)^{1/2}\|\nu-\mu\|_{\H}.
\end{align*}
On the other hand
\[\bigg|\int \g\,\diff (\mu_N-\mu_N^{(\eta)})\,\diff (\nu-\mu)\bigg|\leq \|\nabla\g*(\mu-\nu)\|_{L^\infty}\eta\leq \|\mu-\nu\|_{L^\infty}\eta,\]
where we have used Young's convolution inequality and that $\nabla\g$ is in $L^1(\T^d)$. We then conclude by taking $\eta=N^{-1/d}.$
\end{proof}

Finally, the first term on the left-hand side of the inequality in Proposition~\ref{prop:modulated_monotonicity} allows us to control the microscale interactions between particles by the modulated energy.

\begin{corollary}\label{cor:microscale_control} There exists $C>0$ so that for all sufficiently small $\eps>0$, $\ux_N\in(\T^d)^N$ pairwise distinct, and $\mu\in\Pc(\T^d)\cap L^\infty(\T^d)$
\[\frac{1}{N^2}\sum_{\substack{1\leq i\leq j\leq N\\|x_i-x_j|\leq \eps}} \Big(\g(x_i-x_j)\indc_{\s>0}+\indc_{\s=0}\Big)\leq CF_N(\ux_N,\mu)+C\|\mu\|_{L^\infty}\Big(\eps+\frac{\eps^{-\s}-\log\eps}{N}\Big).\]
\end{corollary}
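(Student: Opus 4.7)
The plan is to deduce the corollary directly from Proposition~\ref{prop:modulated_monotonicity} by choosing the smearing parameter $\eta$ as a function of $\eps$ so that the positive-part term on the left-hand side of Proposition~\ref{prop:modulated_monotonicity} pointwise dominates $\g(x)$ on the whole region $\{|x|\leq\eps\}$. Concretely, I would first establish the pointwise estimate
\[
 \g(x)\leq 2\bigl(\g(x)-\g^{(\eta)}(x)\bigr)_{+}\qquad\text{for all } |x|\leq \eps,
\]
for $\eps$ small enough and an appropriate $\eta=\eta(\eps)$, so that summing over pairs and invoking Proposition~\ref{prop:modulated_monotonicity} (noting $\eps<\rr_0/2$ for small $\eps$) yields
\[
\frac{1}{N^{2}}\sum_{|x_i-x_j|\leq \eps}\g(x_i-x_j)\leq 2F_N(\ux_N,\mu)+C\|\mu\|_{L^\infty}\Big(\eta^{2}+\tfrac{\eta^{-\s}-\log\eta}{N}\Big).
\]

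For $\s>0$, I would take $\eta=C'\eps$ with $C'=C'(d,\s)$ large. Using~\eqref{eq:periodic-correction} one gets by direct spherical averaging that $\g^{(\eta)}(x)\leq C\eta^{-\s}$ whenever $|x|\leq\eta/2$, while $\g(x)\geq \tfrac12\eps^{-\s}$ for $|x|\leq\eps$ and $\eps$ small. Choosing $C'$ so that $C(C')^{-\s}\leq \tfrac14$ forces $\g^{(\eta)}(x)\leq \g(x)/2$ on $|x|\leq\eps$, which is the desired pointwise inequality. Plugging $\eta=C'\eps$ into the error term above, and using $\eta^{2}\leq C\eps$ and $\eta^{-\s}\leq C\eps^{-\s}$, delivers the corollary in this case.

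For $\s=0$, the logarithmic singularity is too weak to give a usable gap between $\g(x)$ and $\g^{(\eta)}(x)$ on $|x|\leq\eps$ when $\eta=C'\eps$, so instead I would choose $\eta=C''\sqrt{\eps}$: then $\g^{(\eta)}(x)\leq -\log\eta+C$ and $\g(x)\geq -\log\eps-C$ still give $\g^{(\eta)}(x)\leq \g(x)/2$, while $\eta^{2}\leq C\eps$ and $\eta^{-\s}=1$ produce exactly the claimed error $\eps+\frac{1-\log\eps}{N}$. To pass from the resulting bound on $\frac{1}{N^{2}}\sum_{|x_i-x_j|\leq\eps}\g(x_i-x_j)$ to the $\indc_{\s=0}$-statement (a weighted pair count), I would multiply through by $(-\log\eps-C)^{-1}\leq 1$, using that $\g(x)\geq -\log\eps-C\geq 1$ on $|x|\leq\eps$ for $\eps$ sufficiently small.

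The only real difficulty is calibrating $\eta$ against $\eps$ so that the pointwise bound $\g\leq 2(\g-\g^{(\eta)})_{+}$ holds on all of $|x|\leq\eps$, not merely on $|x|\ll\eta$: the smooth correction in~\eqref{eq:periodic-correction} has to be swallowed by adjusting the multiplicative constant $C'$ or $C''$, and the different scalings ($C'\eps$ versus $C''\sqrt{\eps}$) reflect the polynomial versus logarithmic nature of the singularity. Once this inequality is in hand, everything else is a direct application of Proposition~\ref{prop:modulated_monotonicity}.
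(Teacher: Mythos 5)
Your proof is correct and follows essentially the same route as the paper: establish a pointwise lower bound for $(\g-\g^{(\eta)})_+$ on $\{|x|\leq\eps\}$ and feed it into Proposition~\ref{prop:modulated_monotonicity}. The paper streamlines the $\s=0$ case by taking $\eta=3\eps$ uniformly and only asking the pointwise bound to dominate $C^{-1}(\g\indc_{\s>0}+\indc_{\s=0})$ up to a $+C\eps$ slack, whereas your insistence on the multiplicative bound $\g\leq 2(\g-\g^{(\eta)})_+$ forces the $\eta\sim\sqrt\eps$ scaling (and a post-processing step via $\g\geq 1$); both calibrations give the claimed error.
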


\begin{proof}
One can readily check that~\eqref{eq:periodic-correction} implies that there exists $C>0$ so that if $\eps$ is sufficiently small
\[0\leq  C^{-1}\Big(\g(x)\indc_{\s>0}+\indc_{\s=0}\Big)\leq \g(x)-\g^{(3\eps)}(x)+C\eps.\]
for all $|x|<\eps.$
Proposition~\ref{prop:modulated_monotonicity} then immediately implies the claim with $\eta=3\eps.$
\end{proof}

\subsection{The commutator estimate}\label{subseq:commutator_estimate}

We now show Proposition~\ref{prop:commutator-estimate} which is analogous to~\cite[Proposition 3.1]{nguyen_mean-field_2022} adapted to the periodic setting. As we only consider potentials that are exact solutions to~\eqref{eq:Riesz}, we give a simpler proof that essentially follows by repeated integration by parts.

In the proof we will view
\[\int (\psi(x)-\psi(y))\cdot\nabla\g(x-y)\,\diff\rho(x)\,\diff\nu(y)\]
as the extension of a continuous bilinear functional on smooth functions. However, using a straightforward measure theory argument one can easily verify that if $\mu$ and $\nu$ are positive measures with finite Riesz energy then
\[\lim_{\eps\rightarrow 0}\int_{(\T^d)^2}\K_\psi(x,y)\,\diff \mu_\eps(x)\,\diff \nu_\eps(y)=\int_{(\T^d)^2}\K_\psi(x,y)\,\diff \mu(x)\,\diff \nu(y),\]
where $\mu_\eps:=\mu*\phi_\eps$ and $\nu_\eps:=\nu*\phi_\eps$ for a family of standard mollifiers $\phi_\eps$. This shows that the measure-theoretic and functional analytic definitions are consistent.

\begin{proof}[Proof of Proposition~\ref{prop:commutator-estimate}]
By approximation, we can assume that $\psi$ is smooth. We may also assume that $d\mu(x)=f(x)\,\diff x$ and $d\nu(x)=g(x)\,\diff x$ where $f,g\in C^\infty(\T^d)$.

Since $\nabla\g$ is zero-mean
\begin{align}\label{eq:mean_removal}
  \int\K_{\psi}(x,y)f(x)g(y)\,\diff x\,\diff y&=\int\K_{\psi}(x,y)\bigg(f(x)-\int f(z)\,\diff z\bigg)\bigg(g(x)-\int g(z)\,\diff z\bigg)\,\diff x\,\diff y
  \\&\notag\quad+\int f(z)\,\diff z\int\psi(x)\cdot\nabla\g(x-y)g(y)\,\diff x\,\diff y
  \\&\notag\quad-\int g(z)\,\diff z\int\psi(y)\cdot\nabla\g(x-y)f(x)\,\diff x\,\diff y.
\end{align}
We then bound
\begin{align*}
 \bigg|\int\psi(x)\cdot\nabla\g(x-y)g(y)\,\diff x\,\diff y\bigg|=\bigg|\int g(y)\g*\nabla\cdot\psi(y)\,\diff y\bigg|&\leq \|g\|_{\H}\|\g*\nabla\cdot\psi\|_{\dot H^{\frac{d-\s}{2}}}
 \\&=\|g\|_{\H}\|\nabla\cdot\psi\|_{\H}
 \\&\leq C\|g\|_{\H}\|\nabla\psi\|_{L^\infty}.
\end{align*}
As the last term in~\eqref{eq:mean_removal} can be bounded identically, to conclude the claim it suffices to show that if $f$ and $g$ are zero-mean functions then
\[\bigg| \int\K_{\psi}(x,y)f(x)g(y)\,\diff x\,\diff y\bigg|\leq C\Big(\|\nabla\psi\|_{L^\infty}+\||\nabla|^{\frac{d-\s}{2}}\psi\|_{L^{\frac{2d}{d-2-\s}}}\Big)\|f\|_{\H}\|g\|_{\H}.
\]

After expanding out the definition of $\K_\psi$ and using~\eqref{eq:Riesz} we have that
\[\int\K_{\psi}(x,y)f(x)g(y)\,\diff x\,\diff y=\c_{d,\s}^{-1}\int\psi(x)\cdot\Big(\nabla \g*f(-\Delta)^{\frac{d-\s}{2}}\g*g+\nabla \g*g(-\Delta)^{\frac{d-\s}{2}}\g*f\Big)(x)\,\diff x.\]
Since $\|\g*f\|_{\dot{H}^{\frac{d-\s}{2}}}=C\|f\|_{\H}$ and $\g*f$ is zero-mean and in $C^\infty(\T^d)$ it thus suffices to prove that for any $0<\alpha\leq\frac{d}{2}$ there exists $C(d,\alpha)>0$ so that for all zero-mean $F,G\in C^\infty(\T^d)$
\begin{equation}\label{eq:commutator-equivalence}\bigg|\int\psi\cdot\Big(\nabla F(-\Delta)^{\frac{\alpha}{2}}G+\nabla G(-\Delta)^{\frac{\alpha}{2}}F\Big)\bigg|\leq C\Big(\|\nabla\psi\|_{L^\infty}+\big\||\nabla|^{\alpha}\psi\big\|_{L^{\frac{d}{\alpha-1}}}\indc_{\alpha>1}\Big)\|F\|_{\dot{H}^{\alpha}}\|G\|_{\dot{H}^{\alpha}}.
\end{equation}
Letting $\alpha=m+\beta$ with $m\in\N$ and $0<\beta\leq 1$, we prove this inductively in $m$.

We frequently use the Caffarelli-Silvestre extension on the torus~\cite{roncal_fractional_2016}. For $0<\beta\leq 1$ we let $k=1$ when $\beta<1$ and $k=0$ when $\beta=1$. Letting $z:=(x,y)\in \T^d\times \R^k$ and $\delta_{\T^d\times\{0\}}$ the uniform surface measure of $\T^d\times\{0\}$, then for all zero-mean $H\in C^\infty$ there exists an extension $\Psi(H)$ on $\T^d\times \R^k$ so that
\[\big((-\Delta)^\beta H\big)\delta_{\T^d\times\{0\}}=-\nabla_z\cdot\big(|y|^{1-2\beta}\nabla_z \Psi(H)\big).\]
Accordingly, integrating by parts it holds that
\begin{equation}\label{eq:sobolev-equality}
    \int_{\T^d\times \R^k} |\nabla_z\Psi(H)|^2 |y|^{1-2\beta}=\int_{\T^d} H(-\Delta)^\beta H=\|H\|_{\dot H^{\beta}}^2.
\end{equation}
For convenience, we set $\gamma:=1-2\beta$.
\vspace{4mm}

\noindent \textbf{Base case:} First we consider $m=0$. Abusing notation so that ${\psi}$ denotes both ${\psi}(x)$ and ${\psi}(x,y):=({\psi}(x),0)$, by integrating by parts we find that
\begin{align*}
&\int_{\T^d} {\psi}\cdot\big(\nabla F(-\Delta)^{\beta}G+\nabla G(-\Delta)^{\beta}F\big)
\\&\quad=-\int_{\T^d\times \R^k} {\psi}\cdot\big(\nabla_z \Psi(F)\nabla_z\cdot(|y|^\gamma \nabla_z \Psi(G))+\nabla_z \Psi(G)\nabla_z\cdot(|y|^\gamma \nabla_z \Psi(F))\big)
\\&\quad=\int_{\T^d\times \R^k}\nabla {\psi}:\Big(\nabla_z\Psi(F)\otimes \nabla_z\Psi(G)+\nabla_z\Psi(G)\otimes \nabla_z\Psi(F)-\Id\nabla_z\Psi(F)\cdot \nabla_z\Psi(G) \Big)|y|^\gamma.
\end{align*}
Applying Cauchy--Schwarz we can bound the absolute value of the last line by
\[C\|\nabla {\psi}\|_{L^\infty}\bigg(\int_{\T^d\times \R^k} |\nabla_z\Psi(F)|^2 |y|^{\gamma}\bigg)^{1/2}\bigg(\int_{\T^d\times \R^k} |\nabla_z\Psi(G)|^2 |y|^{\gamma}\bigg)^{1/2}.\]
With the equality~\eqref{eq:sobolev-equality} this is exactly~\eqref{eq:commutator-equivalence}.\vspace{4mm}

\noindent \textbf{Induction step:} Suppose that the inequality \eqref{eq:commutator-equivalence} holds for $m$. Then integrating by parts we find that
\begin{align*}
&\int_{\T^d} {\psi}\cdot \Big(\nabla F (-\Delta)^{m+1+\beta} G+\nabla G (-\Delta)^{m+1+\beta} F\Big)
\\&\quad=\sum_{i=1}^d\int_{\T^d} {\psi}_i \Big(\partial_i F (-\Delta)^{m+1+\beta} G+\partial_i G(-\Delta)^{m+1+\beta} F\Big) 
\\&\quad=\sum_{i=1}^d\int_{\T^d} \nabla({\psi}_i \partial_i F)\cdot\nabla (-\Delta)^{m+\beta} G+\nabla({\psi}_i\partial_i G)\cdot\nabla(-\Delta)^{m+\beta} F
\\&\quad=\sum_{j=1}^d\int_{\T^d} {\psi}\cdot \nabla (\partial_j F)(-\Delta)^{m+\beta} \partial_j G+{\psi}\cdot \nabla(\partial_jG)(-\Delta)^{m+\beta} \partial_jF
\\&\quad\qquad+\sum_{i,j=1}^d\int_{\T^d} \partial_j{\psi}_i \partial_i F(-\Delta)^{m+\beta}\partial_j G+\partial_j{\psi}_i\partial_i G(-\Delta)^{m+\beta}\partial_j F.
\end{align*}
We can bound the first term in the last line above using the inductive hypothesis and then the Sobolev inequality to find that
\begin{align*}
&\bigg|\int_{\T^d} {\psi}\cdot \nabla (\partial_j F)(-\Delta)^{m+\beta} \partial_j G+{\psi}\cdot \nabla(\partial_jG)(-\Delta)^{m+\beta} \partial_jF\bigg|
\\&\quad\leq C\Big(\big\||\nabla|^{m+\beta}{\psi}\big\|_{L^{\frac{d}{(m+\beta)-1}}}\indc_{m>1}+\|\nabla {\psi}\|_{L^\infty}\Big)\|\partial_j F\|_{\dot{H}^{m+\beta}}\|\partial_j G \|_{\dot{H}^{m+\beta}}
\\&\quad\leq C\Big(\big\||\nabla|^{m+1+\beta}{\psi}\big\|_{L^{\frac{d}{(m+1+\beta)-1}}}\indc_{m>1}+\|\nabla {\psi}\|_{L^\infty}\Big)\|F\|_{\dot{H}^{m+1+\beta}}\|G \|_{\dot{H}^{m+1+\beta}}.
\end{align*}

To bound the remaining terms we integrate by parts
\begin{align*}
\int_{\T^d} \partial_j {\psi} \partial_i F\partial_j(-\Delta)^{m+\beta} G&=\int_{\T^d} \nabla^m(\partial_j{\psi}_i\partial_j F):(-\Delta)^\beta\nabla^m\partial_j G
\\&=-\int_{\T^d\times\R^k} \Psi\big(\nabla^m(\partial_j{\psi}_i\partial_j f)\big):\nabla_z\cdot\big(|y|^\gamma \nabla_z\Psi\big(\nabla^m\partial_j G\big)\big)
\\&=\int_{\T^d\times\R^k}\nabla_z\Psi\big(\nabla^m(\partial_j{\psi}_i\partial_j F)\big):\nabla_z\Psi\big(\nabla^m\partial_j G\big) |y|^\gamma.
\end{align*}
Applying Cauchy--Schwarz we find that
\begin{align*}
    &\bigg|\int_{\T^d\times\R^k}\nabla_z\Psi\big(\nabla^m(\partial_j{\psi}_i\partial_j F)\big):\nabla_z\Psi\big(\nabla^m\partial_j G\big) |y|^\gamma\bigg|
    \\&\quad\leq \bigg(\int_{\T^d\times\R^k}|\nabla_z\Psi\big(\nabla^m(\partial_j{\psi}_i\partial_j F)\big)|^2|y|^\gamma\bigg)^{1/2}\bigg(\int_{\T^d\times\R^k}|\nabla_z\Psi\big(\nabla^m\partial_j G\big)|^2|y|^\gamma\bigg)^{1/2}
    \\&\quad=\|\nabla^m(\partial_j{\psi}_i\partial_j F)\|_{\dot H^\beta}\|\nabla^m\partial_j G\|_{\dot H^\beta}
    \\&\quad\leq C \|\nabla^m(\partial_j{\psi}_i\partial_j F)\|_{\dot H^\beta}\|G\|_{\dot H^{m+1+\beta}}.
\end{align*}
The fractional Leibniz rule then implies that
\[\|\nabla^m(\partial_j{\psi}_i\partial_j F)\|_{\dot H^\beta}\leq C\Big( \big\||\nabla|^{m+1+\beta} {\psi}\big\|_{L^{\frac{d}{(m+1+\beta)-1}}}\|\partial_j F\|_{L^{\frac{2d}{d-2(m+\beta)}}}+\|\nabla {\psi}\|_{L^\infty}\|F\|_{\dot{H}^{m+1+\beta}}\Big),\]
and Sobolev's inequality gives that
\[\|\partial_j F\|_{L^{\frac{2d}{d-2(m+\beta)}}}\leq C\|F\|_{\dot H^{m+1+\beta}}.\]
Together these imply that
\[\bigg|\int_{\T^d} \partial_j \psi_i\partial_i F(-\Delta)^{m+\beta} \partial_jG\bigg|\leq C\Big(\big\||\nabla|^{m+1+\beta} {\psi}\big\|_{L^{\frac{d}{(m+1+\beta)-1}}}+\|\nabla {\psi}\|_{L^\infty}\Big)\|F\|_{\dot{H}^{m+1+\beta}}\|G\|_{\dot H^{m+1+\beta}}.\]
A symmetric argument gives the bound
\[\bigg|\int_{\T^d}\partial_j{\psi}_i\partial_i G(-\Delta)^{m+\beta}\partial_j F\bigg|\leq C\Big(\big\||\nabla|^{m+1+\beta} {\psi}\big\|_{L^{\frac{d}{(m+1+\beta)-1}}}+\|\nabla {\psi}\|_{L^\infty}\Big)\|F\|_{\dot{H}^{m+1+\beta}}\|G\|_{\dot H^{m+1+\beta}},\]
and we have completed the induction.
\end{proof}

\subsection{Renormalized commutator estimates}\label{subseq:renormalized_commutator_estimate}

We conclude the Appendix by using Proposition~\ref{prop:modulated_monotonicity}, Proposition~\ref{prop:commutator-estimate}, and Corollary~\ref{cor:microscale_control} to prove the renormalized commutator estimate Proposition~\ref{prop:renormalized_commutator_estimate}. This follows very similarly to~\cite[Proposition 4.1]{nguyen_mean-field_2022}. The bound~\eqref{eq:renormalized_commutator_estimate} follows by combining~\eqref{eq:old-commutator-estimate} with Lemma~\ref{lem:renormalized_Holder}.

\begin{proof}[Proof of Proposition~\ref{prop:renormalized_commutator_estimate}] First we prove~\eqref{eq:old-commutator-estimate}. We note that~\eqref{eq:periodic-correction} implies that
\begin{equation}\label{eq:kernek_lipschitz_bound}
|\nabla_x K_\psi(x,y)|\leq \|\nabla\psi\|_{L^\infty}|x-y|^{-\s-1}.\end{equation}
Adding and subtracting by $\delta_{x_i}^{(\eta)}$ we then find that
\[\int \K_\psi(x,y)\,\diff (\mu_N-\mu)^{\otimes 2}(x,y)=\text{Term}_1+\text{Term}_2+\text{Term}_3\]
where
\[\text{Term}_1:=\int \K_\psi(x,y)\,\diff (\mu_N^{(\eta)}-\mu)^{\otimes 2}(x,y)\]
\[\text{Term}_2:=-\frac{2}{N}\sum_{i=1}^N\int\K_\psi(x,y)\,\diff (\delta_{x_i}-\delta_{x_i}^{(\eta)})(x)\,\diff \mu(y)\]
\[\text{Term}_3:=\frac{1}{N^2}\sum_{1\leq i\neq j\leq N}^N\int_{(\T^d)\setminus\Delta}\K_\psi(x,y)\,\diff (\delta_{x_i}-\delta_{x_i}^{(\eta)})(x)\,\diff (\delta_{x_i}+\delta_{x_i}^{(\eta)})(y).\]

$\text{Term}_1$ is bounded using Propositions~\ref{prop:commutator-estimate} and~\ref{prop:modulated_monotonicity} by
\[A_\psi\|\mu_N^{(\eta)}-\mu\|_{\H}^2\leq A_\psi\Big(F_N(\ux_N^t)+C\|\mu\|_{L^\infty}\Big(\eta^2+\frac{\eta^{-\s}-\log\eta}{N}\Big)\Big).\]

Using~\eqref{eq:kernek_lipschitz_bound} and Young's convolution inequality we find that
\[|\text{Term}_2|\leq 2\eta \bigg\|\int \nabla_xK_\psi(\cdot ,y)\,\diff \mu(y)\bigg\|_{L^\infty}\leq C\|\nabla\psi\|_{L^\infty}\|\mu\|_{L^\infty}\eta.\]

$\text{Term}_3$ is the hardest to bound, however, it follows exactly as in~\cite[Proposition 4.1]{nguyen_mean-field_2022} with~\cite[Corollary 2.3]{nguyen_mean-field_2022} replaced with Corollary~\ref{cor:microscale_control} that
\[|\text{Term}_3|\leq C\|\nabla\psi\|_{L^\infty}\Big(F_N(\ux_N,\mu)+C\|\mu\|_{L^\infty}\Big(\frac{\eta^{-\s}+1}{N}+\frac{\eps^{-1}-\log\eps}{N}+\eps+\eta\eps^{-\s-1}\Big)\Big).\]
 We then conclude that~\eqref{eq:old-commutator-estimate} holds by optimizing over $\eta$ and $\eps$.

Moving on to showing~\eqref{eq:renormalized_commutator_estimate}, we first note that
\[\int_{(\T^d)^2\setminus \Delta} \K_\psi(x,y)\,\diff(\mu_N-\mu)\,\diff (\mu_N+\mu)=\int_{(\T^d)^2\setminus\Delta} \K_\psi(x,y)\,\diff(\mu_N-\mu)^{\otimes 2}+2\int_{(\T^d)^2} \K_\psi(x,y)\,\diff(\mu_N-\mu)\,\diff \mu\]

The inequality~\eqref{eq:old-commutator-estimate} in Proposition~\ref{prop:renormalized_commutator_estimate} bounds the first term by
\[CA_\psi\Big(F_N(\ux_N,\mu)+C\|\mu\|_{L^\infty}N^{-\beta}\Big).\]
This is in turn bounded by the right-hand side of~\eqref{eq:renormalized_commutator_estimate} since Lemma~\ref{lem:renormalized_Holder} and Young's inequality imply that
\[F_N(\ux_N,\mu)=H_N(\ux_N)-2\int\g(x-y) \,\diff \mu_N\,\diff \mu+\Ec(\mu)\leq C\Big( H_N(\ux_N)+\|\mu\|_{\H}+C\|\mu\|_{L^\infty}N^{-\beta} \Big).\]

To bound the second term, adding and subtracting by $\delta_{x_i}^{(\eta)}$ for $\eta=N^{-1/d}$, we have that
\[\int_{(\T^d)^2} \K_\psi(x,y)\,\diff(\mu_N-\mu)\,\diff \mu=\int_{(\T^d)^2} \K_\psi(x,y)\,\diff(\mu_N-\mu_N^{(\eta)})\,\diff \mu+\int_{(\T^d)^2} \K_\psi(x,y)\,\diff(\mu_N^{(\eta)}-\mu)\,\diff \mu.\]
The first term on the right-hand side is bounded by $C\|\nabla\psi\|_{L^\infty}\|\mu\|_{L^\infty}N^{-1/d}$ exactly like $\text{Term}_2$. Proposition~\ref{prop:commutator-estimate} and then Proposition~\ref{prop:modulated_monotonicity} bound the second term on the right-hand side by
\[C A_\psi\|\mu_N^{(\eta)}-\mu\|_{\H}(\|\mu\|_{\H}+1)\leq C A_\psi(F_N(\ux_N,\mu)+C\|\mu\|N^{-\beta})^{1/2}(\|\mu\|_{\H}+1).\]
Altogether these conclude the claimed bound.

\end{proof}

{\small
\bibliographystyle{alpha}
\bibliography{bibliography}
}

\end{document}